\title{Inverse period mappings of $K3$ surfaces
and
a construction of modular forms for a lattice with the Kneser conditions}
\author{Atsuhira Nagano}
\def\bigzerou{\smash{\lower1.7ex\hbox{\b 0}}}
\newtheorem{thm}{Theorem}[section]
\newtheorem{df}{Definition}[section]
\newtheorem{lem}{Lemma}[section]
\newtheorem{prop}{Proposition}[section]
\newtheorem{rem}{Remark}[section]
\newtheorem{cor}{Corollary}[section]
\def\comment#1{{ }}
\begin{document}
\maketitle

\begin{abstract}
We explicitly construct modular forms on a $4$-dimensional bounded symmetric domain of type $IV$
based on the variation of the Hodge structures of $K3$ surfaces.
We study   the ring of our modular forms.
Because of the Kneser conditions 
of the transcendental lattice of our family of $K3$ surfaces,
our modular group has a good arithmetic property.
Also, our results can be regarded as natural extensions of the result of \cite{CD} for  Siegel modular forms from the viewpoint of $K3$ surfaces. 
\end{abstract}

\footnote[0]{Keywords:  $K3$ surfaces ; Modular forms on symmetric domains.  }
\footnote[0]{Mathematics Subject Classification 2010:  Primary 14J28 ; Secondary   11F11, 32N15.}
\footnote[0]{Running head: Inverse period mapping and modular forms}
\setlength{\baselineskip}{14 pt}

\section*{Introduction}
The purpose of this paper is to give an explicit construction of modular forms on a $4$-dimensional bounded symmetric domain of type $IV$ by using the variation of Hodge structures of weight $2$ for a family of $K3$ surfaces.
Our modular forms can be regarded as natural extensions of classical Siegel modular forms.

Let $U$ be the hyperbolic lattice of rank $2$ and $E_j$ $(j\in \{6,7,8 \})$ be the root lattice.
Let $L$ be the even unimodular lattice of signature $(3,19)$:  $L= U\oplus U \oplus U \oplus E_8 (-1 ) \oplus E_8(-1)$.
This lattice is called the $K3$ lattice,
since it is isomorphic to the lattice of the second homology group of $K3$ surfaces.
Let $M=U\oplus E_8(-1) \oplus E_6 (-1)$ be a lattice  of signature $(1,15)$.
We have the orthogonal complement 
\begin{align}\label{latticeA}
A=  U\oplus U  \oplus A_2 (-1)
\end{align}
 of $M$ in $L$.
 This is a lattice of signature $(2,4)$.
We note that this is the simplest lattice satisfying the Kneser conditions,
which induce good  properties of reflection groups (see Section 3.1).

Let us define the $4$-dimensional space 
$
\mathcal{D}_M= \{ [\xi] \in \mathbb{P} (A \otimes \mathbb{C}) \hspace{0.5mm} | \hspace{0.5mm}      (\xi, \xi) =0,  (\xi, \overline{\xi}) >0\}.
$
Let $\mathcal{D}$ be a connected component of $\mathcal{D}_M$.
Namely, $\mathcal{D}$ is a bounded symmetric domain of type $IV$.
Let $\Gamma = \tilde{O}^+ (A)$ be the subgroup of the
stable orthogonal group $\tilde{O}(A)$ for the lattice $A$ which preserves the connected component  $\mathcal{D}$
(for detail, see Section 2.2).
Then, $\Gamma $ acts on $\mathcal{D}$ discontinuously.
Let
$\mathcal{D}^*$ be the  connected component of the set $\{\xi \in A \otimes \mathbb{C} \hspace{0.5mm} | \hspace{0.5mm}  (\xi, \xi) =0,  (\xi, \overline{\xi}) >0\}$
which projects to $\mathcal{D}$.
We can consider the action of the group $\mathbb{C}^* \times \Gamma$ on $\mathcal{D}^*$.
Then, we  define the modular forms  for $\Gamma$ as follows.

\begin{df}\label{DfModularForm}
If a holomorphic function $f:\mathcal{D}^* \rightarrow \mathbb{C}$ 
given by $Z\mapsto f(Z)$ satisfies the conditions
\begin{itemize}

\item[(i)] $f(\lambda Z) = \lambda^{-k} f(Z) \quad (\text{for all } \lambda \in \mathbb{C}^*)$,

\item[(ii)] $f(\gamma Z) = \chi(\gamma) f(Z) \quad (\text{for all } \gamma \in \Gamma),$

\end{itemize}
where $k\in \mathbb{Z}$ and $\chi \in {\rm Char(\Gamma)={\rm Hom}(\Gamma,\mathbb{C}^*)}$,
then $f$ is called a modular form of weight $k$ and character $\chi$ for the group $\Gamma$.
\end{df}

Here, we note that modular forms in Definition \ref{DfModularForm}  give
natural extensions of the well-known  Siegel modular forms.
In fact, 
for the lattice 
$$A_S=U\oplus U \oplus A_1(-1),$$
we can similarly define the modular forms for the orthogonal group $\tilde{O}^+(A_{S})$.
The ring of such modular forms of even weight and the trivial character 
is generated by four modular forms of weight $4,6,10$ and $12$. 
They give  Siegel modular forms of degree $2$ of  even weight (see  \cite{GN} and \cite{CD}).

Let $\mathcal{A}_k(\Gamma,\chi)$ be the vector space of the modular forms of weight $k$ and character $\chi$.
We obtain the graded ring 
$$
\mathcal{A}(\Gamma) =\bigoplus_{k=0}^\infty \bigoplus_{\chi \in {\rm Char} (\Gamma)} \mathcal{A}_k(\Gamma,\chi)
$$
of modular forms.
In this paper, we shall determine the structure of this ring.

Due to the Kneser conditions,
 the modular group $\Gamma$
is generated by reflections 
for vectors with self-intersection $-2$.
Moreover, the character group of $\Gamma$ is very simple:
$
{\rm Char}(\Gamma) =\{{\rm id}, {\rm det}\}.
$
We shall determine the structure of this ring as the following theorem.

\begin{thm}\label{ThmMain} (see Corollary \ref{Corid} and Theorem \ref{ThmTotal}) 

(1)  
The ring  $\mathcal{A}(\Gamma,{\rm id})$ of modular forms of  character {\rm id} is isomorphic to  $\mathbb{C}[t_4,t_6,t_{10},t_{12},t_{18}]$.
Here, $t_k$ gives a modular form of weight $k$ and character ${\rm id}$.

(2) There is a modular form $s_{54}$  of weight $54$ and character ${\rm det}$.
Here, $s_{54}$ is given by $s_9 s_{45}$, where
\begin{align*}
\begin{cases}
&s_9^2 =t_{18},  \\
&s_{45}^2 = (\text{an irreducible homogeneous polynomial in } t_4,t_6,t_{10},t_{12} \text{ and } t_{18} \text{ of weight } $90$).
\end{cases}
\end{align*}
These relations determine the structure of the ring $\mathcal{A}(\Gamma)$.

\end{thm}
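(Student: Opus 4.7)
The plan splits naturally in two pieces: first, to construct five explicit invariant modular forms $t_4,t_6,t_{10},t_{12},t_{18}$ and show they generate $\mathcal{A}(\Gamma,\mathrm{id})$ freely as a polynomial ring; second, to analyze the $\det$-character summand $\mathcal{A}(\Gamma,\det)$ and identify the square-root relations $s_9^{2}=t_{18}$ and $s_{45}^{2}=(\text{irreducible polynomial})$.

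For part (1), I would produce the generators via the inverse period mapping developed earlier in the paper. The family of $M$-polarized $K3$ surfaces has a four-dimensional coarse moduli space described by five weighted-homogeneous coefficients of its defining equation, and pulling these back along the inverse period map yields $\Gamma$-modular forms of the asserted weights $4,6,10,12,18$. Algebraic independence follows from the density of the period image in $\mathcal{D}$ together with the fact that $\Gamma\backslash\mathcal{D}$ has dimension four: any nontrivial polynomial relation would cut out a proper subvariety containing the period image. The delicate step is to show that the induced polynomial subring exhausts $\mathcal{A}(\Gamma,\mathrm{id})$; for this I would compare the Hilbert series of $\mathbb{C}[t_4,t_6,t_{10},t_{12},t_{18}]$ with the dimension series of $\mathcal{A}_k(\Gamma,\mathrm{id})$ computed via Hirzebruch--Mumford proportionality on the type-$IV$ domain $\mathcal{D}$, or equivalently by identifying the Baily--Borel compactification of $\Gamma\backslash\mathcal{D}$ with the weighted projective space $\mathbb{P}(4,6,10,12,18)$.

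For part (2), the Kneser conditions are the key input: $\Gamma$ is generated by $(-2)$-reflections and $\mathrm{Char}(\Gamma)=\{\mathrm{id},\det\}$, so every $\det$-character form must vanish on the reduced $(-2)$-reflection divisor, and reflection-group analysis then forces $\mathcal{A}(\Gamma,\det)$ to be a rank-one free module over $\mathcal{A}(\Gamma,\mathrm{id})$. A canonical generator $s_{54}$ of weight $54$ can be produced as a Borcherds product whose divisor is exactly the reduced reflection divisor, and its square is an invariant polynomial of weight $108$ in the $t_i$. The factorization $s_{54}=s_9 s_{45}$ then reflects the decomposition of the $(-2)$-reflection divisor into two distinct $\Gamma$-orbits, corresponding to the two root types embedded in the Kneser lattice $A=U\oplus U\oplus A_2(-1)$: one orbit contributes the factor $t_{18}$ (so that $s_9^{2}=t_{18}$ on a quadratic cover) and the other contributes an irreducible weight-$90$ polynomial (whose formal square root is $s_{45}$), while the product $s_9 s_{45}$ descends to an honest modular form on $\Gamma$.

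The hardest step will be the surjectivity claim in part (1), namely showing that the five invariants we construct generate \emph{all} of $\mathcal{A}(\Gamma,\mathrm{id})$ rather than just a proper subring; this rests on a precise Hilbert-series comparison via the Hirzebruch--Mumford formula, and all subsequent statements are calibrated against it. A secondary difficulty is the explicit identification of the weight-$90$ polynomial $s_{45}^{2}$ and verification of its irreducibility, which ultimately requires a Fourier--Jacobi expansion computation at a one-dimensional cusp of $\mathcal{D}$ and a careful tracking of which root orbit each divisorial component corresponds to.
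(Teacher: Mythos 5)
Your overall architecture matches the paper's: part (1) via the inverse period map and the identification of the parameter space with $\mathcal{D}/\Gamma$, part (2) via the observation that every $\det$-character form vanishes on the $(-2)$-reflection divisor, which splits into two $\Gamma$-orbits. But there are genuine gaps in how you would actually pin down the numbers, and your method for part (1) is shakier than it needs to be.

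For part (1), the paper does not use Hirzebruch--Mumford proportionality at all. Its key input is Corollary \ref{CorPer}: the period map is an \emph{isomorphism} $\Phi\colon T\simeq \mathcal{D}/\Gamma$, where $T$ is the complement of a codimension-$3$ subset of $\mathbb{P}(4,6,10,12,18)$. Hartogs then identifies $\bigoplus_k H^0(Q,\mathcal{O}_Q(k))$ with the total coordinate ring of the weighted projective space, i.e.\ with $\mathbb{C}[t_4,t_6,t_{10},t_{12},t_{18}]$, and trivial-character modular forms of weight $k$ are exactly sections of $\mathcal{O}_Q(k)$. Your proposed ``precise Hilbert-series comparison via Hirzebruch--Mumford'' is problematic as stated: proportionality gives only the leading asymptotics of $\dim\mathcal{A}_k$, so by itself it cannot give exact equality; you would additionally need finiteness of $\mathcal{A}(\Gamma,\mathrm{id})$ over the subring and normality of the polynomial ring to conclude. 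Your ``equivalently, identify the Baily--Borel compactification with $\mathbb{P}(4,6,10,12,18)$'' is closer to the paper, but that identification \emph{is} the content of Corollary \ref{CorPer} plus Hartogs, not an independent route.

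For part (2), the concrete gap is that the weights $54$, $9$, $45$ and the degree $108$ of the reflection divisor are asserted rather than derived. The paper obtains $108$ from a canonical-bundle computation on the orbifold $\mathbb{O}=[\mathcal{D}^*/(\mathbb{C}^*\times\Gamma)]$: the adjunction formula gives $\Omega_{\mathcal{D}}\simeq\mathcal{O}_{\mathcal{D}}(4)$, a reflection acts by $-1$ on the top form so $\Omega_{\mathbb{O}}\simeq\mathcal{O}_{\mathbb{O}}(4)\otimes\det$, while on the parameter side $\Omega_{\hat T}=\mathcal{O}(-50)$ and the double cover branched along $\Delta_T=0$ contributes $\mathcal{O}(\deg\Delta_T/2)$; equating gives $4=-50+\deg\Delta_T/2$, hence $\deg\Delta_T=108$. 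The identification of the two components is likewise geometric, not a Borcherds-product or Fourier--Jacobi computation: $\{d_{90}(t)=0\}$ is where two $I_1$ fibres collide into an $I_2$ (an $A_1$-singularity, hence a new $(-2)$-class), with $d_{90}$ computed explicitly as a factor of the discriminant of the Weierstrass equation; $\{t_{18}=0\}$ is where the family degenerates to the Clingher--Doran family with N\'eron--Severi lattice $U\oplus E_8(-1)\oplus E_7(-1)$. Only then does the degree count $18+90=108$ force $\Delta_T=\mathrm{const}\cdot t_{18}d_{90}$ and produce $s_9,s_{45}$ on the double cover. Your appeal to a Borcherds product of weight $54$ with divisor exactly the reduced reflection divisor is an unproved existence claim (and even granting it, you would still need to compute its weight and the degrees of the two orbit components), so as written the relations $s_9^2=t_{18}$ and $s_{45}^2=d_{90}$ are not established.
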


In this paper,
this  theorem will be proved via   period mappings of a family of $K3 $ surfaces.
This idea is natural,
because  moduli spaces of polarized  $K3$  surfaces are related to bounded symmetric domains of type $IV$.
Let us recall the following result by Clingher-Doran \cite{CD} (see also \cite{Kumar}, \cite{NS}).
They studied a family of  explicit elliptic $K3$ surfaces
$S_{CD} (\alpha,\beta,\gamma,\delta)$ (see (\ref{SCD})).
The singular fibres of the elliptic fibration for their family are corresponding to 
the root systems of type $E_8$ and $E_7$.
This structure gives a polarization for their $K3$ surfaces.
Also, they explicitly studied the Shioda-Inose structure on their $K3$  surfaces.
They constructed  Siegel modular forms  of degree $2$ 
by
inverse period mappings
for their $K3$ surfaces.
Such results of $K3$ surfaces can be effectively applied  to obtain important  models in arithmetic geometry
(for example, see \cite{EK}, \cite{NCF}, etc.).
By the way,
there exists a Siegel modular form of weight  $35$. 
According to Igusa \cite{I35},
this modular form satisfies a relation,
which determines the structure of the ring of Siegel modular forms.
It is remarkable that this modular form can be obtained by calculating  the ``discriminant'' of the $K3$ surfaces of \cite{CD}
(see Section 2.3).
 Thus, the ring of Siegel modular forms can be  studied via the geometric structure of appropriate $K3$ surfaces.

Thus, it is meaningful  
to construct modular forms via inverse period mappings of $K3$ surfaces.
In this paper,
we will study  a family of  $K3$ surfaces defined by 
$$
 z_0^2=y_0^3 + (t_4 x_0^4 + t_{10} x_0^3 )y_0 + (x_0^7 + t_6 x_0^6  + t_{12} x_0^5  + t_{18} x_0^4 ),
$$
where $x_0,y_0$ and $z_0$ are affine complex coordinates.
The singular fibres of this elliptic fibration are  corresponding to the root systems of type $E_8$ and $E_6$.
The family $S_{CD}(\alpha,\beta,\gamma,\delta)$ is a subfamily of our family
(see Proposition \ref{PropS(t)CD}).
So, Theorem \ref{ThmMain} gives a natural and non-trivial extension of the work \cite{CD} 
from the viewpoint of $K3$ surfaces (Table 1).
Especially,
the pair of the relations of Theorem \ref{ThmMain} (2) gives a counterpart of  above-mentioned Igusa's relation.
We remark that,
due to the result of \cite{Morrison},
our $K3$ surfaces do not admit the Shioda-Inose structure
and
the moduli space for our $K3$ surfaces
is beyond the moduli space of principally polarized abelian surfaces.

\begin{table}[h]
\center
\begin{tabular}{ccc}
\toprule
  & Results of \cite{CD}  & Our Results  \\
 \midrule
Root Systems for Elliptic Surfaces & $E_8$ and $E_7$ &  $E_8$ and $E_6$ \\ 
 Transcendental Lattice  & $U\oplus U\oplus A_1(-1)$ & $U\oplus U\oplus A_2(-1)$  \\
 Lie Group for Symmetric Domain &  $SO_0(2,3)$  & $SO_0(2,4)$\\
 Inverse Period Mapping &  Classical Siegel Modular Forms  & Modular Forms for $\Gamma$    \\
Weights of Parameters &  $4,6,10,12$ &  $4,6,10,12,18$ \\
Weights from Discriminants &  $5,30$ & $9,45$ \\
 \bottomrule
\end{tabular}
\caption{The results of \cite{CD} and our results}
\end{table}

In Section  1 and Section 2,
we will study the period mapping for our $K3$ surfaces precisely.
In Section 1,
we will investigate the structure of elliptic surfaces for our family.
Here, we will determine the structure of the N\'eron-Severi lattice and the transcendental lattice of a generic member of our family. 
In Section 2,
we will obtain an explicit expression of our period mapping.
The Torelli type theorem and the surjectivity of period mappings for $K3$ surfaces
are very important for our study.
According to Corollary \ref{CorPer},
 our period mapping gives an isomorphism 
 between the parameter space of our family of $K3$ surfaces
 and 
 the quotient space $\mathcal{D}/\Gamma$ for the symmetric domain $\mathcal{D}$.
This 
guarantees that
the inverse period mappings give 
 modular forms for $\Gamma$.
In Section 3,
we will see the properties of latices with the  Kneser conditions.
These properties  enable us 
to study our period mapping precisely.
Especially,
we can study
the structure of the branch loci
coming from the action of the group $\Gamma$ on $\mathcal{D}$.
In Section 4,
we will show that
 the polynomial ring $\mathbb{C}[t_4,t_6,t_{10},t_{12},t_{18}]$
in the parameters of our $K3$ surfaces gives the ring 
  of  modular forms  of the trivial character.
In the proof,  we will consider the structure of line bundles over the modular variety. 
In Section 5,
we will show that there exist 
 $s_{54} = s_9 s_{45} $ of  character ${\rm det}$.
The structure of the ring $\mathcal{A}(\Gamma)$ is determined by the equations for this modular form.
In this argument, 
we will use
some techniques of canonical orbibundles over  orbifolds,
 referring to the  work  of Hashimoto-Ueda \cite{HashimotoUeda}.

At the end of the introduction,
we note two merits of our results for applications  (for detail, see the end of the last section).

We can characterize our results from the viewpoint of relations between $K3$ surfaces and algebraic combinatorics.
Finite unitary reflection groups,
that were classified by Shepherd-Todd \cite{ST},
are very important.
There are several results  about periods of $K3$ surfaces related to Shepherd-Todd groups.
This paper gives a new non-trivial result in this direction (Table 2).

\begin{table}[h]
\center
\begin{tabular}{ccccc}
\toprule
 Shepherd-Todd Group  & Weights of Invariants  &  $K3$ Surfaces and Periods   \\
 \midrule
 No. 8 & $8,12$ &  \cite{IS}  \\ 
No. 23  &$2,6,10$ &  \cite{NTheta} \\
No. 31 & $8,12,20,24$   &\cite{CD} \\
 No. 33 &  $4,6,10,12,18$  & This Paper  \\
 \bottomrule
\end{tabular}
\caption{Relations between combinatorics and  $K3$ surfaces}
\end{table}

Also,
we note that
our modular forms not only give natural extensions of the classical Siegel modular forms,
but also
give modular forms on the  bounded symmetric domain of type $I_{2,2}$,
which corresponds to the Hermitian form of signature $(2,2)$.
This  is coming from the isomorphism 
 $SO_0 (2,4) \simeq SU(2,2)$
 of Lie groups.
Here, we need to recall the  pioneering work
of Matsumoto-Sasaki-Yoshida \cite{MSY} and Matsumoto \cite{Matsumoto}.
They studied
another family of 
 $K3$ surfaces  
and obtained  modular forms on $I_{2,2}$.
However,  our motivation is different from their motivation
and our modular forms are different from their modular forms.
Each result has good features respectively
(see Section 3.2). 
The author would like to point out that
our modular group $\Gamma$  is 
 generated by reflections for vectors with self-intersection $-2$,
whereas the modular group in \cite{MSY} is not so.
In fact, our argument of Section 4 and Section 5 is based on this property of reflections.
Moreover, this characteristic of our modular group
 let us expect non-trivial applications of our modular forms.

\section{Family of elliptic $K3$ surfaces $S(t)$}
 
  In this section,
  we define our $K3$ surfaces.
  They are given by hypersurfaces in a weighted projective space.
 
 \subsection{Hypersurfaces $S(t)$}
 Let $t=(t_4,t_6,t_{10},t_{12},t_{18})\in \mathbb{C}^5-\{0\}.$
 We shall consider the hypersurfaces
 \begin{align}\label{S(t)}
 S(t): z^2=y^3 + (t_4 x^4 w^4 + t_{10} x^3 w^{10})y + (x^7 + t_6 x^6 w^6 + t_{12} x^5 w^{12} + t_{18} x^4 w^{18})
 \end{align}
 of weight $42$
 in the weighted projective space 
 $\mathbb{P}(6,14,21,1)={\rm Proj} (\mathbb{C} [x,y,z,w])$.
 There is an action of the multiplicative group $\mathbb{C}^*$ 
 on $\mathbb{P}(6,14,21,1)$
 ($\mathbb{C}^5-\{0\}$, resp.)
 given by
 $(x,y,z,w) \mapsto (x,y,z,\lambda^{-1} w)$
 ($t=(t_4,t_6,t_{10},t_{12},t_{18}) \mapsto \lambda \cdot t =(\lambda^4 t_4, \lambda^6 t_6, \lambda^{10 } t_{10}, \lambda^{12 } t_{12}, \lambda^{18} t_{18})$, resp.)
  for $\lambda\in \mathbb{C}^*$.
Then, the surface $S(t)$ is invariant under the above action of $\mathbb{C}^*$.
 Hence,
 from the family
 $$
 \{S(t) \hspace{0.5mm} | \hspace{0.5mm} t\in \mathbb{C}^5 -\{0\} \} \rightarrow \mathbb{C}^5 -\{0\},
 $$
 we naturally obtain the family
 $$
\{S ([t]) \hspace{0.5mm} | \hspace{0.5mm} [t] \in \mathbb{P}(4,6,10,12,18) \} \rightarrow \mathbb{P}(4,6,10,12,18).
 $$
 Here, 
 the point of $\mathbb{P}(4,6,10,12,18)$ corresponding to $t=(t_4,t_6,t_{10},t_{12},t_{18}) \in \mathbb{C}^5 -\{0\}$
 is denoted by
 $[t]=(t_4:t_6:t_{10}:t_{12}:t_{18})$.
 
 Set
 \begin{align}
 T= \{[t] \in \mathbb{P}(4,6,10,12,18) \hspace{0.5mm} | \hspace{0.5mm} S([t]) \text{ gives an  elliptic } K3 \text{ surface} \}.
\end{align}
 By an explicit calculation as in \cite{NS} Section 3, we can see that
 \begin{align}\label{TDef}
 \mathbb{P}(4,6,10,12,18) - T = \{[t] \hspace{0.5mm} | \hspace{0.5mm} t_{10}=t_{12}=t_{18} =0 \}.
 \end{align}
 Namely, $T$ is an analytic subset of codimension $3$ in the weighted projective space $\hat{T}=\mathbb{P}(4,6,10,12,18)$.
  Anyway, we have the family
 $$
 \{S([t]) \hspace{0.5mm} | \hspace{0.5mm} [t]\in T \} \rightarrow T
 $$
 of elliptic $K3$ surfaces.
Here, let us consider the $\mathbb{C}^*$-bundle $T^*\rightarrow T$ naturally coming from the above action $t\mapsto \lambda\cdot t$ of $\mathbb{C}^*$.
We  also have the family 
 $$
 \{S(t) \hspace{0.5mm} | \hspace{0.5mm} t \in T^*\} \rightarrow T^*
 $$
 of $K3$ surfaces.
 The above action of $\mathbb{C}^*$ on $(x,y,z,w)$ and $t $ gives an isomorphism
 \begin{align}\label{lambdamap}
 \lambda : S(t) \mapsto S(\lambda\cdot t)
 \end{align}
 for any $\lambda\in \mathbb{C}^*.$
 
 There exists the unique holomorphic $2$-form $\omega_t$ on the $K3$ surface $S(t)$ up to a constant factor.
 Then,
 we can take a holomorphic family
$\{\omega_t\}_{t\in T^*}$,
where $\omega_t$ is the unique holomorphic $2$-form on the $K3$ surface $S(t)$.
 We can see that the above action of $\mathbb{C}^*$ 
 transforms $\omega_t$ into $\lambda^{-1} \omega_t$.
 So, the isomorphism $\lambda$ of (\ref{lambdamap})
 gives the correspondence
 \begin{align}\label{omegalambda}
 \lambda^{*} \omega_{\lambda \cdot t} = \lambda^{-1} \omega_t.
 \end{align}
 From (\ref{omegalambda}),
 we have
 the family $\{\omega_{[t]}\}_{[t]\in T}$
 of holomorphic $2$-forms
 for the family $\{S([t]) \hspace{0.5mm} | \hspace{0.5mm} [t]\in T \}$ of $K3$ surfaces.

  \subsection{Elliptic fibration and periods from $S$-marking }
 
In this subsection,
we will obtain a marking coming from the structure of  elliptic $K3$ surfaces.
Also, we will define the period mapping for our marked $K3$ surfaces.

 Set $x_0=\frac{x}{w^6}$.
 Let 
 \begin{align}\label{R(x0t)}
 R(x_0,t)&=\frac{1}{w^{84} x_0^8} \text{( the discriminant of the right hand side of (\ref{S(t)}) in } y) \notag \\
 & = 27 x_0^6+ 54 t_6 x_0^5+ (54 t_{12} + 4 t_4^3 + 27 t_6^2) x_0^4+(54 t_{18} + 12 t_4^2 t_{10}  + 54t_6 t_{12} ) x_0^3 \notag \\
 &\quad +(27 t_{12}^2 + 12 t_4 t_{10}^2  + 54 t_6 t_{18} ) x_0^2+(4 t_{10}^3 + 54 t_{12} t_{18}) x_0  + 27 t_{18}^2 . 
  \end{align}
 Set
 \begin{align*}
 \begin{cases}
& g_2^\vee (x_0,t)=\frac{1}{w^{28} x_0^3} (t_4 x^4 w^4 + t_{10} x^3 w^{10}) =t_4 x_0 + t_{10} , \\
& g_3^\vee (x_0,t)=\frac{1}{w^{42} x_0^4} (x^7 + t_6 x^6 w^6 + t_{12} x^5 w^{12} + t_{18} x^4 w^{18}) =x_0^3 +t_6 x_0^2 + t_{12} x_0 +t_{18}.
\end{cases}
 \end{align*}
 Let $r(t)$ be the resultant of the polynomials $g_2^\vee (x_0,t) $ and $g_3^\vee (x_0,t)$ in $x_0$:
\begin{align*}
r(t) = t_{10}^3 +t_{4}^2 t_{10} t_{12}  - t_4^3 t_{18}  - t_4 t_6 t_{10}^2,
 \end{align*}
 Then,
 by an explicit calculation,
  we can see that the discriminant of the polynomial $R(x_0,t)$ in $x_0$ is given by
 $r(t)^3 d_{90} (t)$,
 where $d_{90} (t)$ is given by an irreducible  homogeneous polynomial of weight $90$ in $t$:
 \begin{align}\label{d_{90} (t)}
 d_{90} (t) = & 3125 t_{10}^9 + 11664 t_{10}^3 t_{12}^5 + 151875 t_{10}^6 t_{12} t_{18} + 
  314928 t_{12}^6 t_{18} + 1968300 t_{10}^3 t_{12}^2 t_{18}^2 + 
  4251528 t_{12}^3 t_{18}^3\notag\\
  &+ 14348907 t_{18}^5 + 16200 t_4 t_{10}^5 t_{12}^3  + 
  472392 t_4 t_{10}^2 t_{12}^4 t_{18}  - 273375 t_4 t_{10}^5 t_{18}^2  - 
  5314410 t_4 t_{10}^2 t_{12} t_{18}^3  \notag \\
  &+ 4125  t_4^2 t_{10}^7 t_{12}  + 
  108135 t_4^2 t_{10}^4 t_{12}^2 t_{18}  - 1259712 t_4^2 t_{10} t_{12}^3 t_{18}^2  + 
  4251528 t_4^2 t_{10} t_{18}^4  + 864 t_4^3 t_{10}^3 t_{12}^4  \notag \\ 
  &
   - 
  3525 t_4^3 t_{10}^6 t_{18}  + 23328 t_4^3 t_{12}^5 t_{18} 
  - 
  378108t_4^3 t_{10}^3 t_{12} t_{18}^2  + 1102248 t_4^3 t_{12}^2 t_{18}^3
  + 
  888 t_4^4 t_{10}^5 t_{12}^2   \notag \\
  & + 26568 t_4^4 t_{10}^2 t_{12}^3 t_{18}  + 
  227448 t_4^4 t_{10}^2 t_{18}^3  + 16  t_4^5 t_{10}^7 - 456 t_4^5 t_{10}^4 t_{12} t_{18}  - 
  85536 t_4^5 t_{10} t_{12}^2 t_{18}^2\notag \\
  &  + 16 t_4^6 t_{10}^3 t_{12}^3  + 
  432 t_4^6 t_{12}^4 t_{18}  - 1056 t_4^6 t_{10}^3 t_{18}^2  + 62208 t_4^6t_{12} t_{18}^3  + 
  16 t_4^7 t_{10}^5 t_{12} + 480 t_4^7 t_{10}^2 t_{12}^2 t_{18}  \notag \\
  &  - 16 t_4^8 t_{10}^4 t_{18} - 
  1536 t_4^8 t_{10} t_{12} t_{18}^2  + 1024 t_4^9 t_{18}^3  - 13500 t_6 t_{10}^6 t_{12}^2  - 
  481140 t_6 t_{10}^3 t_{12}^3 t_{18} - 2834352   t_6 t_{12}^4 t_{18}^2 \notag \\ 
  &  - 
  1476225 t_6 t_{10}^3 t_{18}^3 - 19131876 t_6 t_{12} t_{18}^4  - 5625 t_4 t_6 t_{10}^8 - 
  200475  t_4 t_6 t_{10}^5 t_{12} t_{18} - 236196 t_4 t_6 t_{10}^2 t_{12}^2 t_{18}^2 \notag \\
  &  - 
  2592  t_4^2 t_6 t_{10}^4 t_{12}^3 - 69984 t_4^2 t_6 t_{10} t_{12}^4 t_{18}  + 
  422820 t_4^2 t_6  t_{10}^4 t_{18}^2 + 944784  t_4^2 t_6 t_{10} t_{12} t_{18}^3 - 
  3420 t_4^3 t_6 t_{10}^6 t_{12} \notag \\
  & - 107460 t_4^3 t_6 t_{10}^3 t_{12}^2 t_{18}  - 
  174960 t_4^3 t_6t_{12}^3 t_{18}^2  - 1889568  t_4^3 t_6 t_{18}^4 + 
  2772 t_4^4 t_6t_{10}^5 t_{18}  + 314928 t_4^4 t_6 t_{10}^2 t_{12} t_{18}^2 \notag \\
  & - 
  186624 t_4^5 t_6 t_{10} t_{18}^3  - 16 t_4^6 t_6 t_{10}^6  - 
  576 t_4^6 t_6 t_{10}^3 t_{12} t_{18}  - 3456 t_4^6 t_6 t_{12}^2 t_{18}^2  + 
  1152 t_4^7 t_6 t_{10}^2 t_{18}^2  - 5832 t_6^2 t_{10}^3 t_{12}^4  \notag \\
  &- 
  10125 t_6^2 t_{10}^6 t_{18}  - 157464 t_6^2 t_{12}^5 t_{18}  - 
  295245 t_6^2 t_{10}^3 t_{12} t_{18}^2  + 5314410 t_6^2 t_{12}^2 t_{18}^3  - 
  5670 t_4 t_6^2 t_{10}^5 t_{12}^2 \notag \\
  & - 170586 t_4 t_6^2 t_{10}^2 t_{12}^3 t_{18}  + 
  3188646 t_4 t_6^2  t_{10}^2 t_{18}^3 + 2700 t_4^2 t_6^2 t_{10}^7  + 
  101898 t_4^2 t_6^2  t_{10}^4 t_{12} t_{18} + 
  1102248 t_4^2 t_6^2 t_{10} t_{12}^2 t_{18}^2  \notag \\
  & + 216t_4^3 t_6^2 t_{10}^3 t_{12}^3  + 
  5832  t_4^3 t_6^2 t_{12}^4 t_{18}  - 195048  t_4^3 t_6^2 t_{10}^3 t_{18}^2  + 
  216 t_4^4 t_6^2 t_{10}^5 t_{12}  + 6480  t_4^4 t_6^2 t_{10}^2 t_{12}^2 t_{18} \notag \\
  &- 
  216  t_4^5 t_6^2 t_{10}^4 t_{18} - 20736 t_4^5 t_6^2 t_{10} t_{12} t_{18}^2  + 
  20736 t_4^6 t_6^2 t_{18}^3  + 6075 t_6^3 t_{10}^6 t_{12}  + 
  219429  t_6^3 t_{10}^3 t_{12}^2 t_{18} \notag \\
  & + 1338444 t_6^3 t_{12}^3 t_{18}^2  + 
  4251528 t_6^3 t_{18}^4  + 1215 t_4 t_6^3  t_{10}^5 t_{18} - 
  393660 t_4 t_6^3 t_{10}^2 t_{12} t_{18}^2  - 1259712 t_4^2 t_6^3 t_{10} t_{18}^3  \notag \\
&  - 
  216 t_4^3 t_6^3 t_{10}^6  - 7776 t_4^3 t_6^3 t_{10}^3 t_{12} t_{18}  - 
  46656 t_4^3 t_6^3 t_{12}^2 t_{18}^2  + 15552  t_4^4 t_6^3  t_{10}^2 t_{18}^2+ 
  729 t_6^4 t_{10}^3 t_{12}^3  + 19683 t_6^4 t_{12}^4 t_{18}  \notag \\
  &- 
  8748 t_6^4 t_{10}^3 t_{18}^2  - 2834352 t_6^4 t_{12} t_{18}^3  + 
  729 t_4 t_6^4 t_{10}^5 t_{12}  + 21870  t_4 t_6^4 t_{10}^2 t_{12}^2 t_{18} - 
  729 t_4^2 t_6^4 t_{10}^4 t_{18} \notag \\
  &  - 69984 t_4^2 t_6^4 t_{10} t_{12} t_{18}^2  + 
  139968   t_4^3 t_6^4 t_{18}^3 - 729 t_6^5 t_{10}^6  - 
  26244  t_6^5 t_{10}^3 t_{12} t_{18} - 157464 t_6^5 t_{12}^2 t_{18}^2  \notag \\
  &+ 
  52488 t_4 t_6^5 t_{10}^2 t_{18}^2  + 314928 t_6^6 t_{18}^3.
 \end{align}
 
 Here, we can compute the singular fibres on these divisors corresponding to the discriminant of $R(x_0,t)$
 (for detail, see \cite{NS} Section 3.1 and \cite{HashimotoUeda} Section 6).
 If $[t]$ is on the divisor $\{[t]\in T \hspace{0.5mm} | \hspace{0.5mm} r(t) =0\}$,
 then both $g_2^\vee (x_0,t)$ and $g_3^\vee (x_0,t)$ is equal to $0$ for some $x_0$.
 On such a point,
 the singular fibre for the elliptic surface $S([t])$ is of  Kodaira type $II$.
 We remark that this type of singularity does not acquire any new singularity.
 On the other hand,
 for a generic point $[t]$ of the divisor $\{[t]\in T\hspace{0.5mm} | \hspace{0.5mm} d_{90} (t)=0 \}$,
 two of singular fibres of type $I_1$ of (\ref{Kodaira})
 collapse into a singular fibre of Kodaira type  $I_2$.
 We note that this type of singularity acquires an $A_1$-singularity.

 Set $\mathcal{T} = T - \{[t]\in T \hspace{0.5mm} | \hspace{0.5mm} d_{90} (t) =0 \text{ or } t_{18} =0 \}. $
 For a generic point $[t] \in \mathcal{T},$
 $S([t])$ gives an elliptic $K3$ surface $\pi:S([t]) \rightarrow \mathbb{P}^1(\mathbb{C})$ whose singular fibres are illustrated in Figure 1.
Namely,  singular fibres are of Kodaira type
 \begin{align}\label{Kodaira}
 II^* + IV^* + 6 I_1.
 \end{align}
 Here, $\pi^{-1} (\infty)$ is of type $II^*$ and $\pi^{-1} (0)$ is of type $IV^*.$
 The general fibre $F$ satisfies the following:
 \begin{align} \label{F-linear}
  F \text{ is linearly equivalent to } a_0 + 2 a_1 + 3 a_2 + 4 a_3 + 5 a_4 + 6 a_5 + 3 a_6 + 4 a_7 + 2 a_8.
 \end{align}

\begin{figure}[h]
\center
\includegraphics[scale=0.8]{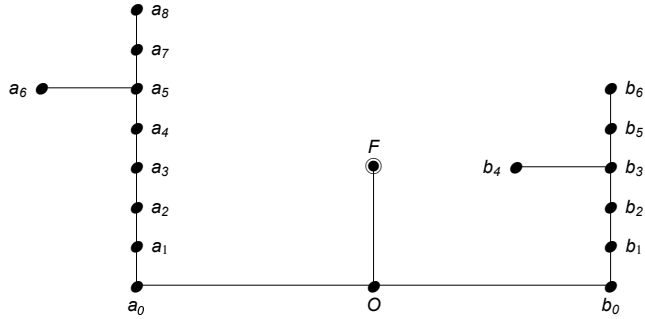}
\caption{Elliptic fibres of $S(t)$}
\end{figure}

 Take $[t]\in \mathcal{T}$.
Let us consider the sublattice of
$ H_2 (S([t]),\mathbb{Z})$ 
generated by
\begin{align}\label{generators}
F,O,
a_1, \cdots, a_8,b_1,\cdots, b_6.
\end{align}
 This lattice is a sublattice of ${\rm NS}(S([t]))$ which is isomorphic to the lattice 
 \begin{align}\label{latticeM}
 M=U\oplus E_8(-1) \oplus E_6(-1)
 \end{align}
  of rank $16$.
So, 
we have an isometry
\begin{align*}
\psi: H_2 (S([t]),\mathbb{Z}) \rightarrow L 
\end{align*} 
 such that $\psi^{-1} (M) \subset {\rm NS}(S([t]))$.
 We take
 $\gamma_7,\cdots,\gamma_{22}\in L$
 as
 $$
 \gamma_7=\psi(F),\gamma_8=\psi(O),\gamma_{8+j} =\psi(a_j) , \gamma_{16+k}=\psi(b_k) \quad(j\in \{1,\cdots,8\}, k\in\{1,\cdots,6\}).
 $$
 Then, $\gamma_7,\cdots,\gamma_{22}$ generate the lattice $M$.
 Here,
 since
 $\hspace{0.5mm} | \hspace{0.5mm}{\rm det}(M)\hspace{0.5mm} | \hspace{0.5mm}=3$
 is a prime number,
 $M$ is a primitive sublattice of $L$.
 Therefore,
 there exist $\gamma_1,\cdots,\gamma_6\in L$
 such that
 \begin{align}\label{gammagenerator}
\{ \gamma_1,\cdots,\gamma_6,\gamma_7,\cdots,\gamma_{22}\}
 \end{align}
 gives a basis of $L$.
 Let $\delta_1,\cdots,\delta_{22}$
 be the dual basis of the basis in (\ref{gammagenerator})
 with respect to the intersection form
 given by the unimodular lattice $L$.
 Here, we note that $\delta_1,\cdots,\delta_6$ generate the lattice 
 $A$ of (\ref{latticeA}).

 Let $S_0 = S([t_0])$ for $[t_0] \in \mathcal{T}$
 be a reference surface.
 Take a sufficiently small neighborhood $U$ of $[t_0]$ in $\mathcal{T}$
 such that there exists a topological trivialization
 $\tau:\{S([t])\hspace{0.5mm} | \hspace{0.5mm}[t]\in U\} \rightarrow S_0 \times U$.
 Let
 $\beta: S_0 \times U \rightarrow S_0$
 be the canonical projection.
 Put
 $r=\beta \circ \tau.$
 Then,
 $r'_{[t]} = r | _{S([t])}$
 gives a $\mathcal{C}^\infty$-isomorphism of complex surfaces.
 For any $[t]\in U,$
 we have an isometry
 $\psi_{[t]} : H_2(S([t]),\mathbb{Z}) \rightarrow L$
 given by
 $\psi_{[t]} = (r'_{[t]})_*$.
 We call this isometry the $S$-marking on $U$.
 By an analytic continuation along an arc $\alpha \subset \mathcal{T}$,
 we can define the $S$-marking on $\mathcal{T}$.
 We remark that this depends on the choice of $\alpha.$
 The $S$-marking preserves the N\'eron-Severi lattice.
 So, we obtain the local period mapping 
 \begin{align}\label{localperiod}
 \Phi_1 : \mathcal{T} \rightarrow \mathbb{P}^5 (\mathbb{C})
 \end{align}
 given by
 \begin{align}\label{S-Per}
 [t] \mapsto [\xi] = \Big( \int_{\psi_{[t]}^{-1} (\gamma_1)} \omega_{[t]} :\cdots: \int_{\psi_{[t]}^{-1} (\gamma_6) } \omega_{[t]}  \Big),
 \end{align}
  where $\omega_{[t]}$ is the unique holomorphic $2$-form on $S([t])$ up to a constant factor
  and $\gamma_1,\cdots,\gamma_6\in L$ are given by (\ref{gammagenerator}).
  We call the pair $(S([t]), \psi_{[t]})$ an $S$-marked $K3$ surface.

\begin{df}\label{DfS-marking}
 Suppose there are two $S$-marked $K3$ surfaces $(S([t_1]),\psi_1)$ and $(S([t_2]),\psi_2)$.
 We  say that $(S([t_1]),\psi_1)$ and $(S([t_2]),\psi_2)$ are equivalent (isomorphic, resp.) 
 if there exists a biholomorphic mapping $f:S([t_1] ) \rightarrow S([t_2])$ such that
 $(\psi_2 \circ f_* \circ \psi_1^{-1})\hspace{0.5mm} | \hspace{0.5mm}_{M} = {\rm id}_{M}$
 ($\psi_2 \circ f_* \circ \psi_1^{-1} = {\rm id}_{L}$, resp.).
 \end{df}

 Since the image of $\Phi_1$ satisfies the Riemann-Hodge relation
 \begin{align}\label{R-Hrel}
 \xi A {}^t \xi =0, \quad \quad \xi A{}^t\overline{\xi} >0,
 \end{align}
  the image of $\Phi_1$ should be contained in the 
  $4$-dimensional space 
  \begin{align}\label{DM}
\mathcal{D}_M =  \{[\xi] \in \mathbb{P}^5 (\mathbb{C}) \hspace{0.5mm} | \hspace{0.5mm} \xi \text{ satisfies (\ref{R-Hrel})} \}.
  \end{align}
 This space has two connected components.
  Let $\mathcal{D}$ be one of the connected components.
  This is a bounded symmetric domain  of type $IV$.

  \subsection{Techniques of elliptic surfaces and $S$-marked $K3$ surfaces}
  In this subsection,
  we shall see a  relation
  between elliptic surfaces and $S$-marked $K3$ surfaces.

  Let $\pi:S\rightarrow \mathbb{P}^1 (\mathbb{C})$ be an elliptic surface with a general fibre $F$.
  Then,
  by virtue of the Riemann-Roch theorem, 
 we can see that
 $\pi $ is the unique elliptic fibration up to ${\rm Aut}(\mathbb{P}^1 (\mathbb{C}))$
 such that $F$ is a general fibre.
 So, we have the following definition.
 
 \begin{df}
 Let $(S_1,\pi_1,\mathbb{P}^1(\mathbb{C}))$ and $(S_2,\pi_2,\mathbb{P}^1(\mathbb{C}))$
 are two elliptic surfaces.
 If there exist a biholomorphic mapping $f: S_1\rightarrow S_2$
 and
 $\varphi \in {\rm Aut} (\mathbb{P}^1 (\mathbb{C}))$
 such that
 $\varphi\circ \pi_1 = \pi_2 \circ f$,
 then we say that
 $(S_1,\pi_1,\mathbb{P}^1(\mathbb{C}))$ and $(S_2,\pi_2,\mathbb{P}^1(\mathbb{C}))$
 are isomorphic as elliptic surfaces.
 \end{df}

 For an elliptic curve given by the Weierstrass form
 $
 z_0^2= y_0^3 - g_2 (x_0) y_0 -g_3(x_0),
 $
 the quotient $j(x_0)=\frac{g_2^3(x_0)}{4 g_2^3(x_0) -27 g_3^2 (x_0)}$ of the coefficients of the elliptic surface is called the $j$-invariant.
 Let $(S_1,\pi_1,\mathbb{P}^1(\mathbb{C}))$  ($(S_2,\pi_2,\mathbb{P}^1(\mathbb{C}))$, resp.)
 is an elliptic surface
 given by the Weierstrass form with the $j$-invariant $j_1(x_0)$ ($j_2(x_0)$, resp.),
 then there exists $\varphi \in {\rm Aut} (\mathbb{P}^1 (\mathbb{C}))$
 such that
 $\pi_1^{-1} (x_0)$ and $\pi_2^{-1} (\varphi (x_0))$ are singular fibres of the same type 
 and
 $j_2 \circ \varphi =j_1$ holds.
 
 For our $K3$ surface $S([t])$ of (\ref{S(t)}),
 $\pi:(x,y,z,w) \mapsto (x,w)$ gives  a natural elliptic fibration.
 Let $x_0=\frac{x}{w^6}$ be an affine coordinate.

 \begin{lem}\label{Lemma1}
 For $[t_1]$ and $[t_2]\in T$,
 we suppose that
 $(S([t_1]),\pi_1,\mathbb{P}^1 (\mathbb{C}))$ is isomorphic to  $(S([t_2]),\pi_2,\mathbb{P}^1 (\mathbb{C}))$
 as elliptic surfaces.
 Then, it holds that $[t_1]=[t_2]$.
 \end{lem}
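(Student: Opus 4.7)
The plan is to exploit two rigidities of the family $S([t])$: the distinguished singular fibres at $x_0 = 0$ and $x_0 = \infty$, together with the uniqueness of a minimal Weierstrass form. I would first verify that for any $[t] \in T$, the coefficient of $x_0^7$ in $g_3^\vee(x_0,t)$ equals $1$, which forces the fibre at $x_0 = \infty$ to be of Kodaira type $II^*$. At $x_0 = 0$, the factor $x_0^3$ in $A = x_0^3 g_2^\vee$ and the factor $x_0^4$ in $B = x_0^4 g_3^\vee$ generically produce a fibre of Kodaira type $IV^*$. In every case, all other singular fibres over the affine chart are of type $I_n$ or $II$ (from the loci $d_{90} = 0$ or $r(t) = 0$), while the two fibres over $\{0, \infty\}$ remain of high Kodaira type; consequently $\{0, \infty\}$ is canonically distinguished on the base.

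Next, observe that an isomorphism $(S([t_1]), \pi_1) \to (S([t_2]), \pi_2)$ of elliptic surfaces induces $\varphi \in \mathrm{Aut}(\mathbb{P}^1(\mathbb{C}))$ which must send singular fibres to singular fibres of the same Kodaira type. By the above, $\varphi$ preserves the set $\{0, \infty\}$, and matching Kodaira types at these two points shows that generically $\varphi$ fixes both, so $\varphi(x_0) = c x_0$ for some $c \in \mathbb{C}^*$. (In the edge case where both fibres happen to be of the same type, a potential swap $x_0 \mapsto c/x_0$ can be treated analogously.) Since both Weierstrass models are minimal on the affine chart (a direct check of the Tate bounds on $\mathrm{ord}_{x_0}(A)$ and $\mathrm{ord}_{x_0}(B)$ at every point), the isomorphism would act on the fibre coordinates as $(y_0, z_0) \mapsto (u^2 y_0, u^3 z_0)$ for a constant $u \in \mathbb{C}^*$.

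Substituting $x_0 \mapsto c x_0$, $y_0 \mapsto u^2 y_0$, $z_0 \mapsto u^3 z_0$ into the defining equation for $S([t_1])$ and equating it with that of $S([t_2])$, the coefficient of $x_0^7$ will force $u^6 = c^7$. Setting $\lambda = u/c$, one computes $\lambda^6 = c$ and $\lambda^7 = u$, and matching the remaining coefficients yields
\begin{align*}
t_{k,2} = \lambda^{k}\, t_{k,1} \qquad (k \in \{4,6,10,12,18\}).
\end{align*}
This is precisely the $\mathbb{C}^*$-action defining the weighted projective space $\mathbb{P}(4,6,10,12,18)$, and therefore $[t_1] = [t_2]$.

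The main obstacle will be controlling the form of the fibre-scaling factor: one must rigorously argue that $u$ is a constant rather than a rational function of $x_0$. This rests on the minimality of both Weierstrass models together with the fact that $A_2 \circ \varphi / A_1$ and $B_2 \circ \varphi / B_1$ are ratios of polynomials of matching degrees whose leading coefficients are already normalised by the form (\ref{S(t)}). A secondary subtlety is the swap case noted above; even there, the extra freedom is absorbed into the $\mathbb{C}^*$-action on $[t]$, so the conclusion $[t_1] = [t_2]$ remains unaffected.
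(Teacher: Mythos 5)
Your proposal is correct and shares the paper's overall skeleton: the fibres of high Kodaira type over $x_0=0$ and $x_0=\infty$ force the induced $\varphi\in {\rm Aut}(\mathbb{P}^1(\mathbb{C}))$ to be a homothety $x_0\mapsto c x_0$, after which one compares coefficients. Where you diverge is in \emph{which} coefficients are compared. The paper looks at the sextic discriminant $R(x_0,t)$ of (\ref{R(x0t)}) --- the positions of the six $I_1$-fibres --- and asserts that matching its coefficients gives $[t_1]=[t_2]$; you instead invoke uniqueness of the minimal Weierstrass model to reduce $f$ to $(x_0,y_0,z_0)\mapsto (cx_0,u^2y_0,u^3z_0)$ and match the coefficients of $A$ and $B$ directly, which yields the weighted rescaling $t_{k,2}=\lambda^k t_{k,1}$ with $\lambda^6=c$, $\lambda^7=u$ explicitly (I checked all five weights; they come out right). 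Your version is in fact tighter at this step: the coefficients of $R$ alone only determine combinations such as $54t_{12}+4t_4^3$, so the paper's ``by observing the coefficients'' elides an argument that your computation makes explicit, at the price of having to justify that $u$ is a constant (which your minimality check over $\mathbb{C}[x_0]$, whose units are $\mathbb{C}^*$, does supply; note also that one should first translate so that $f$ respects zero sections, which costs nothing). You also flag the swap $\varphi:x_0\mapsto c/x_0$, which the paper silently ignores even though it genuinely can occur on $T$ (e.g. $t_{10}=t_{18}=0$, $t_{12}\neq 0$ makes both end fibres of type $II^*$); your claim that the swap is absorbed into the weighted $\mathbb{C}^*$-action is correct, but it does require carrying out the analogous bookkeeping rather than just asserting it. Two cosmetic slips: the leading $x_0^7$ lives in $x_0^4 g_3^\vee$, not in $g_3^\vee$ itself, and on special strata of $T$ an extra $I_0^*$ fibre can appear away from $\{0,\infty\}$ --- harmless, since the Euler number bound still confines all fibres of type $II^*$, $III^*$, $IV^*$ to $\{0,\infty\}$.
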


 \begin{proof}
 We suppose that 
 a biholomorphic mapping $f:S([t_1]) \rightarrow S([t_2])$
 gives an equivalence of elliptic surfaces.
 Then,
 there exists  $\varphi \in {\rm Aut}(\mathbb{P}^1 (\mathbb{C}))$
 such that $\varphi \circ \pi_1 = \pi_2 \circ f.$
 Here, for $j=1,2$, 
 $\pi_j^{-1} (0)$ ($\pi_j^{-1} (\infty)$, resp.) is a singular fibre  of Kodaira type $II^*$ ($IV^*$, resp.).
 This implies that $\varphi \in {\rm Aut} (\mathbb{P}^1 (\mathbb{C}))$
 should be given by the mapping $x_0 \mapsto \lambda x_0$ for  some $\lambda \in \mathbb{C}^*.$
Let us consider the discriminant $R_j(x_0,t)$ $(j=1,2)$ of the right hand side of the elliptic surface $S([t_j])$.
They are given in the form of  (\ref{R(x0t)}), that are  polynomials in $x_0$ of degree $6$.
The six roots of each polynomial give the six images of the singular fibres of type $I_1$ on $\mathbb{P}^1 (\mathbb{C})=x_0\text{-sphere}$.
The roots of $R_1(x_0,t)$ should be sent to those of $R_2(x_0,t)$ by $\varphi.$
Hence, by observing the coefficients of $R_1(x_0,t)$ and $R_2(x_0,t)$,
we can see that  $[t_1]=[t_2]$.
 \end{proof}

 \begin{lem}\label{LemmaKey}
 For $[t_1]$ and $[t_2]\in \mathcal{T}$,
 two $S$-marked $K3$ surfaces $(S([t_1]),\psi_1)$ and $(S([t_2]),\psi_2)$ are equivalent 
 if and only if
 $[t_1]=[t_2].$
 \end{lem}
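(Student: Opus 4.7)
The plan is to prove the two implications separately, with the easy direction being $(\Leftarrow)$ and the substantive direction being $(\Rightarrow)$, which will reduce to Lemma \ref{Lemma1} via the uniqueness of the elliptic fibration.

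For the $(\Rightarrow)$ direction, I would start from the assumption that $f:S([t_1])\to S([t_2])$ is a biholomorphism with $(\psi_2\circ f_*\circ \psi_1^{-1})|_{M}={\rm id}_{M}$. The key observation is that $\gamma_7=\psi_j(F)$ lies in $M$, so the condition on $M$ forces $f_*(\psi_1^{-1}(\gamma_7))=\psi_2^{-1}(\gamma_7)$; that is, $f$ sends the class of the general fibre of $\pi_1$ to the class of the general fibre of $\pi_2$. I would then invoke the Riemann--Roch argument already recalled just before Lemma \ref{Lemma1} (an elliptic fibration is uniquely determined up to $\mathrm{Aut}(\mathbb{P}^1(\mathbb{C}))$ by the class of its general fibre) to conclude that there exists $\varphi\in\mathrm{Aut}(\mathbb{P}^1(\mathbb{C}))$ with $\varphi\circ\pi_1=\pi_2\circ f$. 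Thus $(S([t_1]),\pi_1,\mathbb{P}^1(\mathbb{C}))$ and $(S([t_2]),\pi_2,\mathbb{P}^1(\mathbb{C}))$ are isomorphic as elliptic surfaces, and Lemma \ref{Lemma1} immediately yields $[t_1]=[t_2]$.

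For the $(\Leftarrow)$ direction, if $[t_1]=[t_2]$ then $S([t_1])=S([t_2])$ and I would simply take $f=\mathrm{id}$. The point to verify is that $\psi_1$ and $\psi_2$ agree on $M$, i.e.\ that they both send the same algebraic basis $F,O,a_1,\dots,a_8,b_1,\dots,b_6$ coming from the explicit elliptic fibration to the fixed vectors $\gamma_7,\dots,\gamma_{22}$. This is built into the construction of the $S$-marking: although the marking on the full lattice $L$ depends on the arc of analytic continuation in $\mathcal{T}$, the sublattice $M$ is generated by classes of algebraic cycles (the zero section, the general fibre, and the components of the $II^*$ and $IV^*$ singular fibres) which vary continuously and \emph{canonically} along the family. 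Hence $(\psi_2\circ \mathrm{id}_*\circ\psi_1^{-1})|_{M}=\mathrm{id}_{M}$.

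The main obstacle I anticipate is the bookkeeping for the $(\Leftarrow)$ direction: one must be sure that no monodromy along a loop in $\mathcal{T}$ based at $[t_1]=[t_2]$ can permute the components of the singular fibres $\pi^{-1}(0)$ and $\pi^{-1}(\infty)$ in a way which moves vectors inside $M$. This follows because the labeling of the components $a_i$ (resp.\ $b_k$) is pinned down by their intersection pattern with the zero section $O$ and with each other, both of which are preserved under deformation; so the $S$-marking restricted to $M$ is monodromy-invariant. Once this point is addressed, the two directions combine to give the lemma.
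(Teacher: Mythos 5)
Your proof is correct and follows essentially the same route as the paper: the condition $(\psi_2\circ f_*\circ\psi_1^{-1})|_M={\rm id}_M$ forces $f_*(F_1)=F_2$, the uniqueness (via Riemann--Roch) of the elliptic fibration with a given general fibre class then yields an isomorphism of elliptic surfaces, and Lemma \ref{Lemma1} concludes. Your explicit treatment of the $(\Leftarrow)$ direction, including the monodromy-invariance of the $S$-marking restricted to $M$, is more careful than the paper, which leaves that direction implicit.
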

 
 \begin{proof}
 For $[t_1]$ and $[t_2] \in \mathcal{T}$,
 we suppose that
 there exists a biholomorphic mapping $f:S([t_1]) \rightarrow S([t_2])$
 which gives an equivalence of $S$-marked $K3$ surfaces.
 So, it follows  that
 $(\psi_2 \circ f_* \circ \psi_1^{-1})\hspace{0.5mm} | \hspace{0.5mm}_M = {\rm id}\hspace{0.5mm} | \hspace{0.5mm}_M$.
 This implies that
 $f_* (F_1)=F_2$ holds,
  where
  $F_j$ is a general fibre  for the elliptic surface $S([t_j])$ $(j=1,2)$.
  Then,
 $F_2$
 is a general fibre not only for the elliptic fibration $\pi_2$
 but also for another elliptic fibration $\pi_1 \circ f^{-1}$.
 Therefore, we have $\pi_2 = \pi_1 \circ f^{-1}$ holds up to ${\rm Aut} (\mathbb{P}^1 (\mathbb{C}))$.
 Hence, we proved that 
 two $S$-marked $K3$ surfaces $(S([t_1]),\psi_1)$ and $(S([t_2]),\psi_2)$ are equivalent 
 if and only if
 there exists an isomorphism of elliptic surfaces
 between $(S([t_1]),\pi_1,\mathbb{P}^1(\mathbb{C}))$ and $(S([t_2]),\pi_2,\mathbb{P}^1 (\mathbb{C}))$.
 Due to Lemma \ref{Lemma1},
 the assertion is proved.
 \end{proof}

 \subsection{Picard number}

 \begin{thm}
 For a generic point $[t]\in \mathcal{T},$
 the Picard number of $S([t])$  is equal to $16$.
 \end{thm}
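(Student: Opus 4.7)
The lower bound $\rho(S([t])) \geq 16$ is immediate from the construction preceding the theorem: the sixteen classes $F, O, a_1, \ldots, a_8, b_1, \ldots, b_6$ span a sublattice of $\mathrm{NS}(S([t]))$ isomorphic to the rank-$16$ lattice $M = U \oplus E_8(-1) \oplus E_6(-1)$, and $|\det M| = 3$ has already been noted to make this a primitive sublattice of $L$. The content of the theorem is therefore the reverse inequality at a generic member of the family.

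The key numerical coincidence to exploit is $\dim \mathcal{T} = 4 = \dim \mathcal{D}$: the parameter space $\mathcal{T}$ is open in $\mathbb{P}(4,6,10,12,18)$, which is $4$-dimensional, and the bounded symmetric domain $\mathcal{D}$ of (\ref{DM}) is also $4$-dimensional. My plan is to show that the local period mapping $\Phi_1:\mathcal{T}\to \mathcal{D}$ of (\ref{localperiod}) has maximal rank at a generic $[t]$, hence contains an open set of $\mathcal{D}$ in its image. Once this is known, a generic period point $\Phi_1([t])$ cannot lie in the countable union of proper analytic subvarieties $\mathcal{D}\cap \delta^{\perp}$ with $\delta\in A$ a $(-2)$-vector (or any algebraic cycle class) orthogonal to the period. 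Since $\rho(S([t]))>16$ would force the period to sit in exactly such a Noether--Lefschetz locus, a generic $[t]\in \mathcal{T}$ must satisfy $\rho(S([t]))=16$.

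To verify that $\Phi_1$ has rank $4$ generically, I would combine Lemma \ref{LemmaKey} with the Torelli-type apparatus for $K3$ surfaces. Lemma \ref{LemmaKey} asserts that distinct $[t]\in \mathcal{T}$ give inequivalent $S$-marked $K3$ surfaces, so the induced map from $\mathcal{T}$ to the moduli space of $M$-polarized $K3$ surfaces is injective. The latter moduli space is known to be a $4$-dimensional quotient of $\mathcal{D}$ by an arithmetic group, and the period map on it is an isomorphism by the Torelli theorem plus surjectivity of periods. Consequently, $\Phi_1$ is a locally injective holomorphic map between two $4$-dimensional spaces and is therefore a local biholomorphism on a Zariski open set; in particular it has open image. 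Alternatively, one may prove rank-$4$-ness directly by differentiating the residue expression for $\omega_{[t]}$ coming from the affine Weierstrass form with respect to $t_4, t_6, t_{10}, t_{12}, t_{18}$ and checking that four of the five resulting classes in $H^{1,1}(S([t]))$ are independent modulo the scaling direction spanned by $\omega_{[t]}$ (which is precisely the $\mathbb{C}^*$-direction collapsed when passing to $[\xi]$).

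The principal obstacle is the transversality statement: one must know that the natural map from $\mathcal{T}$ into the full Kuranishi deformation space of $S([t])$ hits the $M$-polarized locus transversally, so that infinitesimal Torelli transports to a rank-$4$ statement for $\Phi_1$ itself rather than just for the universal marked family. This is exactly what Lemma \ref{LemmaKey} together with the surjectivity of the period map for polarized $K3$ surfaces (to be invoked in Section 2, cf.\ the reference to Corollary \ref{CorPer} in the introduction) delivers, so this step is in principle available once that machinery is in place; the proof will likely amount to a careful assembly of these ingredients rather than a new computation.
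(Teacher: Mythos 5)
Your proposal is correct and follows essentially the same route as the paper: the lower bound comes from the embedded copy of $M$, and the upper bound comes from combining the local Torelli theorem with Lemma \ref{LemmaKey} to get injectivity of $\Phi_1$ on a $4$-dimensional neighborhood, whence the image cannot sit inside the (at most $3$-dimensional) Noether--Lefschetz loci that a jump in Picard number would force. The paper states this dimension-count contradiction more tersely, but the mechanism you spell out (open image versus a countable union of proper analytic subsets) is exactly what is implicit there.
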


 \begin{proof}
 Since $M$ of (\ref{latticeM}) is isomorphic to a sublattice of ${\rm NS}(S([t]))$ for $[t] \in \mathcal{T}$,
 it is apparent that ${\rm rank}({\rm NS}(S([t]))) \geq 16$.
 Take a sufficiently small neighborhood $U$ in $\mathcal{T}$ around a point $[t_0] \in \mathcal{T}.$  
 Here, we can apply the local Torelli theorem for $K3$ surfaces to our local period mapping  $\Phi_1$ of (\ref{localperiod}).
 Then, this theorem guarantees that
 there exists an isomorphism
 between $(S([t_1]),\psi_1)$ and $(S([t_2]),\psi_2)$
 for $[t_1],[t_2]\in U$,
 if
  $\Phi_1 ([t_1]) = \Phi_1 ([t_2]).$
 Hence, by virtue of Lemma \ref{LemmaKey},
 the local period mapping $\Phi_1$ is injective on the open set $U (\subset \mathcal{T})$.
 
If we suppose that ${\rm rank}({\rm NS} (S([t]))) $ is not equal to $16$ for generic point $[t]\in \mathcal{T}$,
we have a contradiction to the injectivity of $\Phi_1$ on $U$.
This proves the theorem.
 \end{proof}
 
 \begin{cor}\label{CorNSTr}
 For a generic point $[t] \in \mathcal{T}$,
 the lattice  ${\rm NS}(S([t] ))$ (${\rm Tr} (S([t] ))$, resp.) is given by the intersection matrix $M=U\oplus E_8 (-1) \oplus E_6 (-1)$
($A=U\oplus U \oplus A_2(-1)$, resp.).
 \end{cor}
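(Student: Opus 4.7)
The preceding theorem already does the heavy lifting: it guarantees that the Picard number of $S([t])$ equals $16$ for a generic $[t]\in\mathcal{T}$. Since the explicit sublattice $M\subset{\rm NS}(S([t]))$ generated by the classes $F, O, a_1,\dots,a_8,b_1,\dots,b_6$ coming from the $II^*$ and $IV^*$ configurations already has rank $16$, the inclusion $M\subset{\rm NS}(S([t]))$ has finite index. The task reduces to showing that this index equals $1$.

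The key input is the primitivity observation $|\det M|=3$, already exploited in the excerpt. Via the $S$-marking, I view $M\subset{\rm NS}(S([t]))\subset L$, with all three lattices of rank $16$. Writing $n=[{\rm NS}(S([t])):M]$, the standard discriminant identity gives $|\det M|=n^2\,|\det{\rm NS}(S([t]))|$, so $n^2$ divides $3$. Since $3$ is squarefree, $n=1$ and hence ${\rm NS}(S([t]))=M\simeq U\oplus E_8(-1)\oplus E_6(-1)$.

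For the transcendental lattice, I use the definition ${\rm Tr}(S([t]))={\rm NS}(S([t]))^{\perp}$ inside $H_2(S([t]),\mathbb{Z})$. Transporting through the marking $\psi_{[t]}:H_2(S([t]),\mathbb{Z})\to L$, this becomes $M^{\perp}$ in the $K3$ lattice $L$, which is precisely $A=U\oplus U\oplus A_2(-1)$ by (\ref{latticeA}). There is no substantive obstacle here: the corollary follows mechanically from the generic rank determination in the preceding theorem together with the primitivity of $M$ in $L$; the only point requiring a line of justification is the discriminant-index identity that rules out a nontrivial overlattice.
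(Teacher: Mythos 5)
Your argument is correct and is essentially the one the paper intends: the corollary is left unproved there, but the ingredients are exactly the generic Picard number $16$ from the preceding theorem together with the observation in Section 1.2 that $|\det M|=3$ forces primitivity, and your discriminant--index identity $|\det M|=n^{2}\,|\det {\rm NS}(S([t]))|$ is just a direct way of packaging that same squarefree-discriminant trick. The identification ${\rm Tr}(S([t]))=M^{\perp}_{L}=A$ then follows as you say from the definition of $A$ as the orthogonal complement of $M$ in $L$.
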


 \begin{rem}
 Reid listed  weighted projective
$K3$ hypersurfaces with Gorenstein singularities.
They are often called `famous 95' $K3$ surfaces.
We note that our N\'eron-Severi lattice $M=U\oplus E_8 (-1) \oplus E_6 (-1)$
is equal to that of No.88 of that list (see \cite{B}).
 \end{rem}

 \section{Period mapping}

 In this section, we consider the period mapping for our $K3$ surfaces precisely.

 \subsection{Pseudo ample marked lattice polarized  $K3$ surfaces with an elliptic fibration}

 Let $S$ be a $K3$ surface.
  Let $\omega $ be the unique non-zero holomorphic $2$-form 
 on $S$ up to a constant factor.
 Regarding $\omega$ as a homomorphism
 $H_2 (S,\mathbb{Z}) \rightarrow \mathbb{C}$,
 the N\'eron-Severi lattice ${\rm NS}(S)$ is equal to its kernel.
 Letting $\rho$ be the rank of ${\rm NS}(S)$,
 then  ${\rm NS}(S)$ is a non-degenerated lattice of signature $(1,\rho-1)$. 
 We note that we can identify $H_2(S,\mathbb{Z})$ with $H^2(S,\mathbb{Z})$ 
 by the Poincar\'e duality.
 
 Also,  $H_\mathbb{R}^{1,1} (S) = H^{1,1} (S) \cap H^2 (S,\mathbb{R})$ has signature $(1,19)$.
 Let $V(S)^+$ be the component
 of 
 $V(S) = \{x\in H_\mathbb{R}^{1,1} (S)  \hspace{0.5mm} | \hspace{0.5mm} (x,x)>0 \}$
 which contains the class of a K\"ahler form on $S$.
 This is called the positive cone.
 Set
 \begin{align}
 \Delta (S) = \{\delta \in {\rm NS}(S) \hspace{0.5mm} | \hspace{0.5mm} (\delta,\delta)=-2\}.
 \end{align} 
 Let $\Delta (S)^+$ be the subset of  effective classes of $\Delta (S)$ and $\Delta(S)^- = - \Delta(S)^+$.
 Due to the Riemann-Roch theorem, we can see that
 $\Delta (S) = \Delta(S)^+ \coprod \Delta(S)^-$.
 Let $W(S)$ be the subgroup of the orthogonal group of $H_2(S,\mathbb{Z})$ generated by reflections for elements of $\Delta(S)$.
 This is called the Weyl group.
 It acts on $V(S)^+.$
 Elements of $V(S)^+$ are effective divisor classes with positive self-intersection.
 The fundamental domain for this action is given by
 $$
 C(S) =\{x\in V(S)^+ \hspace{0.5mm} | \hspace{0.5mm} (x,\delta) \geq 0 \text{ for all } \delta \in \Delta(S)^+ \}.
 $$
 We call
 $$
 C(S)^+ =\{ x\in V(S)^+ \hspace{0.5mm} | \hspace{0.5mm} (x,\delta) > 0 \text{ for all } \delta \in \Delta(S)^+ \}
 $$ 
 the K\"ahler cone for $S$.
 We set
 $$
 {\rm NS}(S)^+ = C(S) \cap H_2(S,\mathbb{Z}),  \quad \quad {\rm NS}(S)^{++} = C(S)^+ \cap H_2(S,\mathbb{Z}).
 $$
 Then,
  $x\in {NS}(S)^+$ gives a numerically effective divisor with $(x,x)>0$.
 Also, the set $ {NS}(S)^{++}$ consists of ample divisor classes.

 Let $M$ be a lattice in (\ref{latticeM}) of signature $(1,15)$,  which is embedded in the $K3$ lattice $L$.
 By fixing a generic point $[t_0]\in \mathcal{T},$
 we take a reference surface
 $S_0=S([t_0])$ as in Section 1.2.
 We obtain the corresponding $S$-marking $\psi_0: H_2(S_0,\mathbb{Z}) \rightarrow L$.
 By Corollary \ref{CorNSTr},
 we can suppose that 
 $\psi_0^{-1} (M) = {\rm NS} (S_0)$.
 Letting
 $$\Delta(M)= \{\delta \in M \hspace{0.5mm} | \hspace{0.5mm} (\delta,\delta)=-2\},$$
 we set
 $$
 \Delta(M)^+ = \{\delta \in \Delta(M) \hspace{0.5mm} | \hspace{0.5mm} \psi_0^{-1} (\delta) \in {\rm NS}(S_0) \text{ gives an effective class}\}.
 $$
 Then, letting $\Delta (M)^{-} = \{-\delta \hspace{0.5mm} | \hspace{0.5mm} \delta\in \Delta (M)^+\}$,
 we have the decomposition
 $$
 \Delta(M) = \Delta(M)^+ \coprod \Delta(M)^-.
 $$
 Set
 $$
 V(M)=\{y\in M_\mathbb{R}   \hspace{0.5mm}|\hspace{0.5mm} (y,y)>0 \}.
 $$
 Then, $V(M)$ has  two connected components.
 We can take the connected component $V(M)^+$ containing $\psi_0(x)$ for $x\in V(S_0)^+$.
 Set
 $$
 C(M)^+ = \{y\in V(M)^+ \hspace{0.5mm}|\hspace{0.5mm} (y,\delta)>0 \text{ for all } \delta \in \Delta(M)^+ \}.
 $$

 \begin{df}\label{Def:polarizedK3}
 An $M$-polarized $K3$ surface is a pair $(S,j)$ where $S$ is a $K3$ surface and $j:M\hookrightarrow {\rm NS}(S)$ is a primitive lattice embedding.
 We say that $(S,j)$ is a pseudo-ample $M$-polarized $K3$ surface if
 $$
 j(C(M)^+) \cap {\rm NS}(S)^+ \not=\phi.
 $$
 We say that $(S,j)$ is an ample $M$-polarized $K3$ surface if
 $$
 j(C(M)^+) \cap {\rm NS}(S)^{++} \not=\phi.
 $$

 \end{df}
 
 Two $M$-polarized $K3$ surfaces $(S_1,j_1) $ and $(S_2,j_2)$ are isomorphic if there exists an isomorphism of $K3$ surfaces $f: S_1\rightarrow S_2$ such that $j_2 = f_* \circ j_1$.
 
 \begin{df}
  Let $S$ be a $K3$ surface and $\psi: H_2 (S,\mathbb{Z})\rightarrow L$ be an isometry of lattices satisfying $\psi^{-1} (M) \subset {\rm NS}(S) $.
  Then, the  pair $(S,\psi)$ is called a marked $M$-polarized $K3$ surface.
   Here, the pair $(S, \psi^{-1}\hspace{0.5mm} | \hspace{0.5mm}_M)$ is a $M$-polarized $K3$ surface in the sense of Definition \ref{Def:polarizedK3}.
  If  $(S, \psi^{-1}\hspace{0.5mm} | \hspace{0.5mm}_M)$ is a pseudo-ample $M$-polarized $K3$ surface,
  then we call $(S,\psi)$ a pseudo-ample marked $M$-polarized $K3$ surface.
 \end{df}
 
 \begin{df}
 Suppose $(S_1,\psi_1)$ and $(S_2,\psi_2)$ are two marked pseudo-ample $M$-polarized $K3$ surfaces.
 If $(S_1,\psi_1^{-1}\hspace{0.5mm} | \hspace{0.5mm}_M) $ is isomorphic to $(S_2,\psi_2^{-1}\hspace{0.5mm} | \hspace{0.5mm}_M) $
 as $M$-polarized $K3$ surfaces,
 we say that $(S_1,\psi_1)$ and $(S_2,\psi_2)$ are isomorphic as pseudo-ample $M$-polarized $K3$ surfaces.
 Moreover, if there exists an isomorphism 
 $f:S_1\rightarrow S_2$ such that
 $\psi_1 = \psi_2 \circ f_*$,
 then 
 we say that 
 $(S_1,\psi_1)$ and $(S_2,\psi_2)$ are isomorphic as marked pseudo-ample $M$-polarized $K3$ surfaces.
 \end{df}

Let $\mathcal{M}_M$ be the fine moduli space of marked $M$-polarized $K3$ surfaces.
 This can be obtained by gluing local moduli space of marked $M$-polarized $K3$ surfaces.
 The local period mappings are glued together to give a holomorphic mapping
 \begin{align}\label{per}
 per: \mathcal{M}_M \rightarrow \mathcal{D}_M.
 \end{align}
 Here, $\mathcal{D}_M$ is the symmetric space in (\ref{DM}).
 The mapping (\ref{per}) is \'etale and surjective.

 \begin{thm}(Dolgachev \cite{D} Section 3) \label{ThmDolg}
 The restriction of the period mapping (\ref{per})
 to the subset $\mathcal{M}_M^{pa}$ of isomorphism classes of marked pseudo-ample $M$-polarized $K3$ surfaces gives a surjective mapping
 \begin{align}\label{PA-Period}
 per' : \mathcal{M}_M^{pa} \rightarrow \mathcal{D}_M.
 \end{align}
 \end{thm}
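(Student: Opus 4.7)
The plan is to derive the surjectivity of $per'$ from the already granted surjectivity of the full period map $per: \mathcal{M}_M \to \mathcal{D}_M$ by showing that every fiber of $per$ contains a pseudo-ample representative. Given $[\omega] \in \mathcal{D}_M$, I would first choose any marked $M$-polarized $K3$ surface $(S,\psi) \in per^{-1}([\omega])$ (which exists by surjectivity of $per$) and then modify the marking so that the resulting class lies in $\mathcal{M}_M^{pa}$ without changing the period.

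The key tool is the Weyl group $W(S)$ generated by reflections in $(-2)$-classes of $\mathrm{NS}(S)$. By the Riemann--Roch decomposition $\Delta(S) = \Delta(S)^+ \sqcup \Delta(S)^-$, the group $W(S)$ acts on the positive cone $V(S)^+$ with fundamental chamber $\overline{C(S)^+}$. Since $M$ has signature $(1,15)$ and $\psi^{-1}(M) \subset \mathrm{NS}(S)$, the preimage $\psi^{-1}(V(M)^+)$ is a non-empty open subset of $V(S)^+$, so one can pick a class $y \in \psi^{-1}(C(M)^+)$. Applying the unique $w \in W(S)$ that sends $y$ into $\overline{C(S)^+}$ and defining $\psi' := w \circ \psi$ produces a new marking for which $(\psi')^{-1}(C(M)^+) \cap \mathrm{NS}(S)^+ \neq \emptyset$, that is, the pseudo-ample condition.

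Two routine verifications then close the argument. First, every reflection generating $w$ lies in the orthogonal group of $\mathrm{NS}(S)$, hence acts trivially on the transcendental lattice; in particular the period is unchanged, so $per'(S,\psi') = [\omega]$. Second, $w$ preserves $\mathrm{NS}(S)$ setwise, so the primitivity condition $(\psi')^{-1}(M) \subset \mathrm{NS}(S)$ is inherited from that of $\psi$. Hence $(S,\psi') \in \mathcal{M}_M^{pa}$ maps to $[\omega]$ under $per'$, which establishes surjectivity.

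The main obstacle is the compatibility between the sign conventions on $\Delta(M)$ fixed via the reference surface $S_0$ in Section 2.1 and the intrinsic Riemann--Roch splitting of $\Delta(S)$: after applying $w$, one must ensure that $\psi'$ still respects the chosen orientation on $V(M)^+$ rather than flipping to the opposite component. This is handled by further restricting the Weyl modification to the subgroup $W(M) \subset W(S)$ generated by reflections in $M$-roots, which already acts transitively on the Weyl chambers of $V(M)^+$ and preserves the $M$-polarization structure. The detailed orbit bookkeeping, including the analysis of how chambers of $V(M)^+$ lift to chambers of $V(S)^+$, is carried out in Dolgachev \cite{D}, Section 3.
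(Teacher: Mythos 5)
The paper does not actually prove this statement; it is quoted from Dolgachev \cite{D} with no argument supplied, so your proposal can only be judged as a reconstruction of the standard proof. Your core strategy is the right one and is essentially what Dolgachev does: start from surjectivity of $per$ on all of $\mathcal{M}_M$, pick $(S,\psi)$ in the fibre over $[\omega]$, choose an \emph{integral} class $c\in C(M)^+\cap M$ (you should say integral, since $\mathrm{NS}(S)^+=C(S)\cap H_2(S,\mathbb{Z})$; an open cone in $M_{\mathbb{R}}$ does contain lattice points), move $y=\psi^{-1}(c)$ into the fundamental domain $C(S)$ by some $w\in W(S)$, and replace $\psi$ by $\psi\circ w^{-1}$. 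Since $w(x)-x\in \mathrm{NS}(S)=\ker\omega$ for every $x$, the period is unchanged (your phrasing via the transcendental lattice is slightly off, because $\psi^{-1}(\gamma_i)$ need not lie in ${\rm Tr}(S)$ when $\mathrm{NS}(S)\supsetneq\psi^{-1}(M)$, but the conclusion is correct), and $w(y)\in\psi'^{-1}(C(M)^+)\cap\mathrm{NS}(S)^+$ exhibits pseudo-ampleness.

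The genuine gap is the component issue, which you sense but resolve incorrectly. Your claim that $\psi^{-1}(V(M)^+)$ is ``a non-empty open subset of $V(S)^+$'' is false on both counts: it is a subset of the $16$-dimensional slice $V(S)\cap\psi^{-1}(M_{\mathbb{R}})$, hence not open in $V(S)^+$, and, more seriously, nothing forces it to lie in $V(S)^+$ rather than in $V(S)^-$. If $y\in V(S)^-$ then no element of $W(S)$ can move it into $C(S)$, since $W(S)$ preserves each component of $V(S)$; restricting to the subgroup $W(M)\subset W(S)$, as you propose at the end, cannot repair this (it also fixes the components of $V(S)$, and in any case the chambers cut out by $\Delta(S)$ are finer than those cut out by $\psi^{-1}(\Delta(M))$ whenever $\mathrm{NS}(S)$ is strictly larger than $\psi^{-1}(M)$). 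The correct and standard fix is to first replace $\psi$ by $(-\mathrm{id}_L)\circ\psi$ when $\psi^{-1}(V(M)^+)\subset V(S)^-$: this preserves $\psi^{-1}(M)\subset\mathrm{NS}(S)$, changes the period vector only by a sign and hence fixes the point of $\mathcal{D}_M\subset\mathbb{P}(A\otimes\mathbb{C})$, and swaps the two components, after which your Weyl-group argument goes through verbatim.
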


 \begin{rem}
 Let $\mathcal{M}_M^a$ be the set of isomorphism classes of marked  ample isomorphism classes.
 Set $\mathcal{D}_M^\circ = \mathcal{D}_M-\bigcup_{\delta \in \Delta(A)} (H_\delta \cap \mathcal{D}_M)$, where
 $\Delta(A)=\{\delta\in A \hspace{0.5mm} | \hspace{0.5mm} (\delta,\delta)=-2\}$. 
 Then,
 the period mapping (\ref{PA-Period}) induces a bijection between $\mathcal{M}_M^a$ and $\mathcal{D}_M^\circ$ (see \cite{D}).
 \end{rem}

In this paper,
 the orthogonal group for the lattice $M$ is denoted  by $O(M)$.
Let us consider the group
$$
\Gamma(M) = \{\sigma \in O(L) \hspace{0.5mm} | \hspace{0.5mm} \sigma(m) = m \text{ for any } m \in M\},
$$
which acts on the moduli space $\mathcal{M}_M$
by
$(S,\psi)\mapsto (S,\psi\circ \sigma)$.
This action does not change the isomorphism class of the $M$-polarized $K3$ surface $(S,\psi^{-1}\hspace{0.5mm} | \hspace{0.5mm}_M)$. 
 So,
$ \mathcal{M}_M^{pa} / \Gamma (M)$
gives the isomorphism classes of pseudo ample $M$-polarized $K3$ surfaces.
 
 Here,
 note that 
 there exists an injective homomorphism
 $\Gamma(M) \rightarrow O(A)$.
 Let $\Gamma_M$ be the image of this injection.
 We can see that
 $\Gamma _M$
 is equal to
 \begin{align}\label{StableO}
  \tilde{O}(A) = {\rm Ker}(O(A) \rightarrow {\rm Aut}(A^\vee/A)).
 \end{align}
 Here, $A^\vee ={\rm Hom}(A,\mathbb{Z})$,
 $A^\vee / A$ is the discriminant group of $A$
and $O(A) \rightarrow {\rm Aut}(A^\vee/A)$ is the natural homomorphism.
The group $\tilde{O}(A)$ is called the stable orthogonal group of $A$.
 We remark that $\tilde{O}(A)$ is a subgroup of finite index in $O(A)$.

 \begin{thm}(Dolgachev \cite{D} Section 3) \label{ThmDolg}
 The period mapping (\ref{PA-Period}) gives the following bijection
 \begin{align}\label{PerIso}
 \mathcal{M}_M^{pa} / \Gamma (M) \simeq \mathcal{D}_M / \tilde{O}(A).
 \end{align}
 Namely, the quotient space $\mathcal{D}_M / \tilde{O}(A)$ gives the set of isomorphism classes of pseudo-ample $M$-polarized $K3$ surfaces.
 \end{thm}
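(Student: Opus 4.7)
The plan is to establish the bijection by combining three ingredients: the surjectivity of the period map $per'$ from the previous theorem, the identification of the re-marking action of $\Gamma(M)$ with an action of $\tilde{O}(A)$ on the period domain, and the strong Torelli theorem for pseudo-ample lattice-polarized $K3$ surfaces.

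First I would verify equivariance. For $(S,\psi)\in\mathcal{M}_M^{pa}$ and $\sigma\in\Gamma(M)$, it follows directly from the defining formula (\ref{S-Per}) that $per'(S,\psi\circ\sigma)$ equals the image of $per'(S,\psi)$ under $\sigma^{-1}|_A$, where $\sigma|_A$ denotes the restriction to the orthogonal complement of $M$ in $L$. Because $\sigma$ fixes $M$ pointwise and $L$ is unimodular, Nikulin's theory of discriminant forms forces $\sigma|_A$ to act trivially on the discriminant group $A^\vee/A$, so the restriction lies in $\tilde{O}(A)$. This yields the homomorphism $\Gamma(M)\to\tilde{O}(A)$ already asserted in (\ref{StableO}), and it intertwines the two group actions via $per'$.

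Second, I would prove this homomorphism is an isomorphism. Injectivity is immediate: $\sigma|_M=\mathrm{id}_M$ together with $\sigma|_A$ determines $\sigma$ on $M\oplus A\subset L$, and the extension to the unimodular overlattice $L$ is unique. For surjectivity, given any $\tau\in\tilde{O}(A)$, the pair $(\mathrm{id}_M,\tau)$ induces the same (trivial) automorphism on the discriminant forms of $M$ and $A$ (which are anti-isometric because $L$ is unimodular), so by Nikulin's gluing principle it extends to an isometry of $L$ belonging to $\Gamma(M)$.

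Third, I would pass to the quotients and establish the bijection (\ref{PerIso}). Well-definedness of the induced map follows from the equivariance just established. Surjectivity is inherited from the surjectivity of $per'$ in the previous theorem. For injectivity, if $per'(S_1,\psi_1)$ and $per'(S_2,\psi_2)$ lie in the same $\tilde{O}(A)$-orbit, I would use surjectivity of $\Gamma(M)\to\tilde{O}(A)$ to replace $(S_2,\psi_2)$ by a $\Gamma(M)$-equivalent marking with the same period as $(S_1,\psi_1)$, and then invoke the strong Torelli theorem for $K3$ surfaces to produce an isomorphism of marked pseudo-ample $M$-polarized $K3$ surfaces.

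The main obstacle will be the surjectivity of $\Gamma(M)\to\tilde{O}(A)$, which is the one place where the arithmetic of discriminant forms really intervenes; the pseudo-ample hypothesis plays the secondary but essential role of ensuring the Hodge isometry produced by Torelli respects the K\"ahler cone and hence comes from an honest biholomorphism rather than merely a lattice isometry composed with a Weyl reflection.
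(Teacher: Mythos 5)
The paper gives no proof of this statement at all---it is quoted verbatim from Dolgachev \cite{D}, Section 3---so there is nothing internal to compare against; your sketch is a correct reconstruction of the standard argument behind that citation (equivariance of the period map under re-marking, the Nikulin gluing argument showing the restriction $\Gamma(M)\to O(A)$ is an isomorphism onto $\tilde{O}(A)$, and Torelli plus surjectivity on the quotients). The only soft spot is the final step: for \emph{pseudo-ample} (rather than ample) polarizations the Hodge isometry obtained after re-marking carries only a nef and big class to a nef and big class, so the off-the-shelf strong Torelli theorem does not apply directly; one must first correct by reflections in $(-2)$-classes orthogonal to that class and check the correction preserves the $M$-polarization, which is exactly the content of Dolgachev's Theorem 3.1 rather than a formal consequence of Torelli. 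Your closing paragraph gestures at this but treats it as automatic, whereas it is where the actual work in the cited proof lies.
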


 \begin{rem}
 Due to \cite{GHSz} Lemma 3.2,
 the index  $[O(A):\tilde{O}(A)]$ is equal to the order of the finite group $O(q_A)$.
 Here, $q_A$ is the discriminant quadratic form.
 In our case of the lattice $A$ of (\ref{latticeA}),
 we can see that $O(q_A)\simeq \mathbb{Z}/2\mathbb{Z}$. 
 Hence,  the stable orthogonal group $\Gamma_M=\tilde{O}(A)$ does not coincide with $O(A)$. 
 \end{rem}

   \subsection{$P$-marking and period mapping}

 Let us recall some basic properties of $K3$ surfaces
 (for detail, see \cite{LP} or \cite{L}).
 Letting $S$ be a $K3$ surface,
 if $x\in {\rm NS}(S)$ satisfies $(x,x) \geq -2$,
 then one of $x$ or $-x$ is representable by an effective divisor.
  An irreducible curve $C$ is called a nodal curve if $(C,C)=-2$.
 We can see that a nodal curve $C$ is smooth and rational.
  Let $B(S)\subset \Delta(S)$ be the set of classes of nodal curves.
The set $B(S)$ gives a root basis for $\Delta(S)$.
 This implies that 
 \begin{align}\label{RootIneq}
 (x,y) \geq 0 \quad \quad (\text{for all distinct } x,y\in B(S)).
 \end{align}
 If a primitive element $D\in {\rm NS}(S) -\{0\}$ is on the boundary of the positive cone and $(D,\alpha)\geq 0$ for all $\alpha \in B(S)$,
 then the linear system $ | D | $ defines an elliptic fibration.

 Now, let us introduce  $P$-marked $K3$ surfaces
to study the period mapping for our elliptic $K3$ surface $S([t])$ of (\ref{S(t)}) precisely. 
 Take a reference surface $S_0 = S([t_0])$ for a fixed $[t_0]\in \mathcal{T}$. 
 Take an $S$-marking $\psi_0:H_2(S_0,\mathbb{Z}) \rightarrow L$
 as in Section 1.2. 
 Let $S $ be an algebraic $K3$ surface.
 An isometry $\psi : H_2 (S,\mathbb{Z}) \rightarrow L$ is called a $P$-marking of $S$ 
 if it satisfies the  following conditions:
 \begin{itemize}
 \item[(i)] $\psi^{-1} (M) \subset {\rm NS} (S)$ for the lattice $M$ of (\ref{latticeM}),
  
 \item[(ii)]  For $a_j,b_j,O,F$ in (\ref{generators}), $\psi^{-1} \circ \psi_0 (a_j), \psi^{-1}\circ \psi_0 (b_j), \psi^{-1}\circ \psi _0(O)$ and $\psi^{-1}\circ \psi_0(F)$
 give effective divisors on $S$, 
 
 \item[(iii)] $\psi^{-1}\circ \psi_0 (F)$ is a nef divisor. 
 
 \end{itemize}
 The pair $(S,\psi)$ is called a $P$-marked $K3$ surface.
  Also, we define the equivalent classes and isomorphic classes
 as in Definition \ref{DfS-marking}.
 
 We can define the period 
\begin{align}\label{P-Period}
\Phi ((S,\psi)) = \Big( \int_{\psi^{-1} \circ \psi_0 (\gamma_1)} \omega : \cdots : \int_{\psi^{-1} \circ \psi_0 (\gamma_6)} \omega \Big)
\end{align}
 of a $P$-marked $K3$ surface $(S,\psi)$,
 where $\omega$ is the unique holomorphic $2$-form on $S$ up to a constant factor.

 \begin{lem}\label{LemP-S(t)}
 Any equivalent classes of $P$-marked $K3$ surfaces $(S,\psi)$ are given by  explicit elliptic $K3$ surfaces $S([t])$ $([t] \in T)$ defined by the equation (\ref{S(t)}).
 \end{lem}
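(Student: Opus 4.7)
The plan is to use the $P$-marking data to reconstruct an elliptic fibration with section together with two prescribed reducible fibers, and then invoke the Weierstrass theory for elliptic $K3$ surfaces to force the equation into the shape \eqref{S(t)}. I will proceed in four steps.

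First I would extract an elliptic fibration from the $P$-marking. Set $D=\psi^{-1}\circ\psi_0(F)$. By condition (i) we have $D\in \mathrm{NS}(S)$, and since $\psi_0$ and $\psi$ are isometries, $(D,D)=(F,F)=0$ in the lattice $M=U\oplus E_8(-1)\oplus E_6(-1)$. Condition (iii) says $D$ is nef, and by the effectiveness of the image of $F$ (from (ii)) together with the structure of $M$, $D$ is primitive and lies on the boundary of the positive cone. The general fact on $K3$ surfaces recalled in Section 2.2 then implies that $|D|$ defines an elliptic fibration $\pi: S\to\mathbb{P}^1(\mathbb{C})$ with general fibre in the class $D$.

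Next I would identify the divisors $\psi^{-1}\circ\psi_0(a_j)$, $\psi^{-1}\circ\psi_0(b_k)$, $\psi^{-1}\circ\psi_0(O)$ using the intersection form. Because their intersections with $D$ match those with $F$ inside $M$, the $a_j$'s (resp.\ $b_k$'s) give effective divisors orthogonal to $D$, hence supported on fibres, whose mutual intersections exactly reproduce the $E_8$ (resp.\ $E_6$) Dynkin configuration. Together with the linear equivalence \eqref{F-linear}, they form two reducible fibres of Kodaira type $II^*$ and $IV^*$ respectively. The class $\psi^{-1}\circ\psi_0(O)$ is effective with $(O,D)=1$ and $(O,O)=-2$, so by Riemann--Roch and inequality \eqref{RootIneq} it is represented by a smooth rational curve which is a section of $\pi$. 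After composing with an automorphism of $\mathbb{P}^1$ we may assume that the $II^*$ fibre sits over $\infty$ and the $IV^*$ fibre over $0$.

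Now I would put $\pi$ in Weierstrass form. Using $O$ as the zero section, the Kodaira--N\'eron theory produces a global Weierstrass equation
\begin{equation*}
z_0^2=y_0^3+g_2(x_0)y_0+g_3(x_0)
\end{equation*}
over the affine $x_0$-line, with $\deg g_2\le 8$ and $\deg g_3\le 12$ since $S$ is $K3$. The Tate algorithm (or the standard description of minimal Weierstrass models) then translates the fibre types into order-of-vanishing conditions. The $II^*$ fibre over $\infty$ forces $\deg g_2=4$ and $\deg g_3=7$, meaning $g_2=t_4x_0^4+(\text{lower})$ and $g_3=x_0^7+t_6x_0^6+(\text{lower})$ after rescaling. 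The $IV^*$ fibre over $0$ forces $\mathrm{ord}_0 g_2\ge 3$ and $\mathrm{ord}_0 g_3=4$. Putting these conditions together kills the missing monomials and leaves exactly $g_2=t_4x_0^4+t_{10}x_0^3$ and $g_3=x_0^7+t_6x_0^6+t_{12}x_0^5+t_{18}x_0^4$, which is the equation \eqref{S(t)}.

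The main obstacle will be the third step: carefully applying Tate's algorithm at both $\infty$ and $0$, verifying that the resulting Weierstrass model is already minimal (no further blow-downs or coordinate changes are hidden), and checking that the residual freedom in the Weierstrass normalization is precisely the $\mathbb{C}^*$-action $t\mapsto \lambda\cdot t$ appearing in \eqref{lambdamap}, so that the $[t]\in T$ produced from the $P$-marking is well-defined. Lemma \ref{Lemma1} will then guarantee that the assignment $(S,\psi)\mapsto[t]$ depends only on the equivalence class of $(S,\psi)$, completing the proof.
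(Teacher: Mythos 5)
Your proposal follows essentially the same route as the paper: extract the elliptic fibration from the nef class $\psi^{-1}\circ\psi_0(F)$ and the section class $\psi^{-1}\circ\psi_0(O)$ via Riemann--Roch, locate the $E_8$ and $E_6$ configurations in the fibres over $\infty$ and $0$ by intersection data, pass to the Weierstrass model to land on equation \eqref{S(t)}, and invoke Lemma \ref{Lemma1} for well-definedness. The one caveat is that the $P$-marking only forces the fibre over $0$ to \emph{contain} the $IV^*$ subgraph --- the paper explicitly allows it to be of type $IV^*$, $III^*$ or $II^*$ --- so your condition should read $\mathrm{ord}_0\, g_3\ge 4$ rather than $=4$; this is harmless, since the more degenerate cases still correspond to points of $T$ (e.g.\ $t_{18}=0$).
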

 
 \begin{proof}
 We note that
 a divisor $D$ is nef if and only if
 $(D, C)\geq 0$ for any effective divisor $C$ on $S$.
 By the Riemann-Roch Theorem,
 we can see that 
 there exists a unique elliptic fibration $\pi:S \rightarrow \mathbb{P}^1(\mathbb{C})=x\text{-sphere}$
 such that
 $\psi^{-1} \circ \psi_0(F)$ is a general fibre of $\pi$
 and 
 $\psi^{-1} \circ \psi_0(O)$ is the zero-section of $\pi$.
 By considering the intersections of  $\psi^{-1} \circ \psi_0(F)$, $\psi^{-1} \circ \psi_0 (a_j)$ and $\psi^{-1} \circ \psi_0 (b_j)$,
 we can suppose  that  $\psi^{-1}\circ \psi_0(a_j)$  ($\psi^{-1} \circ \psi_0(b_j)$, resp.) is a component of the singular fibre $\pi^{-1}(\infty)$ ($\pi^{-1}(0)$, resp.).
 It follows that $\pi^{-1}(\infty)$ ($\pi^{-1}(0)$, resp.) should contain the subgraph of Kodaira type $II^*$ ($IV^*$, resp.). 
 According to the classification of singular fibres of elliptic fibration,
  $\pi^{-1}(\infty)$ ($\pi^{-1}(0)$, resp.) should be of type $II^*$ ($IV^*,  III^*$ or $II^*$, resp.).
Then, by considering the Weierstrass models of 
elliptic surfaces,
we can see that 
such an elliptic $K3$ surface $(S,\pi,\mathbb{P}^1(\mathbb{C}))$ can be  given by $S([t])$ $([t]\in T)$ of (\ref{S(t)}).
 
 Moreover, 
 due to Lemma \ref{Lemma1}
 and 
 a similar argument to the proof of Lemma \ref{LemmaKey},
 we can prove that
 $\{S([t]) \hspace{0.5mm} | \hspace{0.5mm} [t]\in T \}$ gives
 the set of equivalent classes of $P$-marked $K3$ surfaces.
 \end{proof}

 \begin{lem}\label{PropNodal}
 Let $(S_0,\psi_0)$ be an $S$-marked $K3$ surface.
Let $(S,\psi)$ be a marked pseudo-ample $M$-polarized $K3$ surface.
If $C\in {\rm NS}(S_0)$ be a nodal curve,
then $\psi^{-1} \circ \psi_0(C) \in {\rm NS} (S)$ is a nodal curve. 
\end{lem}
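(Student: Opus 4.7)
The plan is to verify that $D := \psi^{-1}\circ\psi_0(C)$ satisfies three properties: $(D,D)=-2$, $D$ is effective on $S$, and $D$ is the class of an irreducible smooth rational curve. These correspond to three natural steps.

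The first is immediate. Since $\psi_0$ is an $S$-marking with $\psi_0^{-1}(M)\subset{\rm NS}(S_0)$ and $C\in{\rm NS}(S_0)$, the class $\psi_0(C)$ lies in $M$, and has self-intersection $-2$ because $\psi_0$ is an isometry. Applying $\psi^{-1}$, which is likewise a lattice isometry with $\psi^{-1}(M)\subset{\rm NS}(S)$ by the marking hypothesis on $(S,\psi)$, places $D$ in $\psi^{-1}(M)\subset{\rm NS}(S)$ with $(D,D)=-2$.

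Effectiveness follows from the pseudo-ample hypothesis. It supplies some $h\in C(M)^+$ such that $\psi^{-1}(h)\in{\rm NS}(S)^+$ is numerically effective. Since $C$ is an effective class on $S_0$, one has $\psi_0(C)\in\Delta(M)^+$, and then the defining inequality of $C(M)^+$ yields $(h,\psi_0(C))>0$. Transferring through the isometry $\psi$ gives $(\psi^{-1}(h),D)>0$. If $-D$ were effective, this would contradict the non-negativity of the pairing of a nef class with any effective divisor. Since $D^2=-2$, the standard Riemann--Roch argument for $K3$ surfaces forces exactly one of $\pm D$ to be effective, so $D$ itself is.

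Showing $D$ is represented by an irreducible smooth rational curve is the main obstacle. Suppose for contradiction $D=E_1+E_2$ with both $E_i\in\Delta(S)^+$ nonzero; then $D^2=-2$ forces $E_1\cdot E_2=1$. By Corollary \ref{CorNSTr} the reference surface $S_0$ is generic in the sense that ${\rm NS}(S_0)=\psi_0^{-1}(M)$, and since $C$ is an irreducible nodal curve on $S_0$, the class $\psi_0(C)$ is indecomposable as a sum of two elements of $\Delta(M)^+$ inside $M$. The plan is to use the period map from Theorem \ref{ThmDolg} and the connectedness of the moduli space $\mathcal{M}_M^{pa}$ to deform $(S,\psi)$ along a path toward $(S_0,\psi_0)$. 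Along such a deformation, $E_1$ and $E_2$ can remain algebraic only if they lie in the primitive sublattice $\psi^{-1}(M)$, in which case pulling back yields a forbidden decomposition of $\psi_0(C)$ in $M$; alternatively they become non-algebraic on the generic fiber, which contradicts the persistence of the decomposition $D=E_1+E_2$. This specialization argument completes the irreducibility.
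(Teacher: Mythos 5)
Your first two steps are correct and follow essentially the same route as the paper. The only cosmetic difference is in how the strict positivity is obtained: the paper transports the pseudo-ample class $\kappa\in C(M)^+$ back to the reference surface, where $\psi_0^{-1}(\kappa)$ is ample, and invokes Nakai's criterion to get $(\psi_0^{-1}(\kappa),C)>0$; you instead read off $(h,\psi_0(C))>0$ directly from the defining inequality of $C(M)^+$ once you observe $\psi_0(C)\in\Delta(M)^+$. Both versions use Corollary~\ref{CorNSTr} implicitly (to place $C$ in $\psi_0^{-1}(M)$) and both conclude effectiveness by ruling out $-D$ against a nef class. Up to this point your argument is fine and is exactly the paper's proof.

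Your third step goes beyond what the paper proves: the paper's own proof stops at effectiveness and never addresses irreducibility, even though its definition of ``nodal curve'' requires it and the root-basis inequality in the proof of Theorem~\ref{ThmMPAP} quietly relies on it. Your instinct that this needs an argument is therefore sound, but the specialization argument you sketch does not close. The lemma concerns one fixed surface $(S,\psi)$; the components $E_i$ of a putative effective decomposition of $D$ live in ${\rm NS}(S)$, which for a non-generic member is strictly larger than $\psi^{-1}(M)$, so there is no reason for them to lie in $\psi^{-1}(M)$, and no mechanism forces the decomposition to ``persist'' along a path in $\mathcal{M}_M^{pa}$ toward $(S_0,\psi_0)$ --- the classes simply cease to be algebraic on nearby members, which is consistent and yields no contradiction on $S$ itself. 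In addition, your reduction to $D=E_1+E_2$ with both $E_i\in\Delta(S)^+$ is not the general case: an effective class of square $-2$ decomposes as $\sum n_iE_i$ with multiplicities and possibly with components of square $0$ or higher. So the irreducibility claim remains unproved in your write-up; since the paper's proof omits it entirely, you have established at least as much as the paper does, but no more.
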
 

\begin{proof}
Since
$$
(\psi^{-1}\circ \psi_0 (C) , \psi^{-1}\circ \psi_0 (C) ) =(C,C) =-2,
$$
one of
$\psi^{-1}\circ \psi_0 (C)$ or $-\psi^{-1} \circ \psi_0 (C)$ 
is effective.
From the definition,
 we can see that 
there exists $\kappa \in C(M)^+$ such that $\psi^{-1} (\kappa) $ is in the closure of the K\"ahler cone.
Due to the argument in Section 2.1,
$\psi_0^{-1} (\kappa) \in H_2(S_0,\mathbb{Z})$ gives an ample divisor class.
According to Nakai's criterion, we have
$$
(\psi^{-1} (\kappa), \psi^{-1} \circ \psi_0 (C)) = (\psi_0^{-1} (\kappa) ,C) >0.
$$
So, it follows that $\psi^{-1} \circ \psi_0(C)$ is effective.
\end{proof}

 \begin{thm}\label{ThmMPAP}
Let $(S,\psi)$ be a marked pseudo-ample $M$-polarized $K3$ surface.
Then, $\psi: H_2(S,\mathbb{Z}) \rightarrow L$ gives a $P$-marking for $S$.
 \end{thm}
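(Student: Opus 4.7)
The plan is to verify conditions (i), (ii), (iii) in the definition of $P$-marking for the pair $(S,\psi)$ in turn; conditions (ii) and (iii) will both be obtained by transporting nodal curves from the reference surface $S_0=S([t_0])$ to $S$ via Lemma \ref{PropNodal}. Condition (i), that $\psi^{-1}(M)\subset{\rm NS}(S)$, is built directly into the definition of a marked $M$-polarized $K3$ surface, so there is nothing to check.

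For condition (ii), I would first observe that on the elliptic fibration $\pi:S_0\to\mathbb{P}^1(\mathbb{C})$ the classes $a_1,\ldots,a_8$ are components of the $II^*$-fibre, $b_1,\ldots,b_6$ are components of the $IV^*$-fibre, and $O$ is the zero section; all of these are smooth rational curves of self-intersection $-2$ on $S_0$, that is, nodal curves. By Corollary \ref{CorNSTr} these classes lie in $M={\rm NS}(S_0)$ (for generic $[t_0]\in\mathcal{T}$), so Lemma \ref{PropNodal} yields that $\psi^{-1}\circ\psi_0(a_j)$, $\psi^{-1}\circ\psi_0(b_j)$, and $\psi^{-1}\circ\psi_0(O)$ are nodal, in particular effective, on $S$. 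For the fibre class $F$, the relation (\ref{F-linear}) gives $a_0=F-2a_1-3a_2-4a_3-5a_4-6a_5-3a_6-4a_7-2a_8\in M$; the class $a_0$ is the remaining component of the $II^*$-fibre, hence also a nodal curve on $S_0$, and Lemma \ref{PropNodal} applies once more. Therefore $\psi^{-1}\circ\psi_0(F)$ is a non-negative integral combination of effective divisors on $S$, hence effective.

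For condition (iii), set $F':=\psi^{-1}\circ\psi_0(F)$. Then $F'$ is effective by the previous paragraph, satisfies $(F')^2=(F,F)=0$ since $\psi$ and $\psi_0$ are isometries, and is primitive in ${\rm NS}(S)$: indeed $F$ together with $O$ spans a hyperbolic summand of $M$ and is therefore primitive in $M$, and primitivity is preserved by the primitive embedding $M\hookrightarrow L$ and the isometries $\psi_0,\psi$. On a $K3$ surface it is classical (and recalled at the start of Section 2.2) that any primitive effective divisor class with self-intersection zero is the class of a general fibre of an elliptic fibration, and is in particular nef. Applying this to $F'$ yields condition (iii).

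The only nontrivial input beyond Lemma \ref{PropNodal} is this last classical fact on elliptic pencils on $K3$ surfaces, which is the main step one has to invoke; the rest is a direct unpacking of the $II^*+IV^*$ configuration of nodal curves on $S_0$ and its transport to $S$.
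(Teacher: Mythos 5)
Your treatment of conditions (i) and (ii) is fine and matches what the paper does implicitly: transporting the nodal classes $a_j,b_j,O$ via Lemma \ref{PropNodal} and writing $\psi^{-1}\circ\psi_0(F)$ as a non-negative combination of effective classes using (\ref{F-linear}). The gap is in condition (iii). The ``classical fact'' you invoke --- that a primitive effective divisor class of self-intersection zero on a $K3$ surface is automatically a fibre class, hence nef --- is false, and it is not what is recalled at the start of Section 2.2. The statement there carries the additional hypothesis $(D,\alpha)\geq 0$ for all $\alpha\in B(S)$, i.e.\ essentially the nefness you are trying to prove; dropping it makes the citation circular. A concrete counterexample: if $E$ is a fibre class and $O$ a section (a nodal curve with $(E,O)=1$), then $D=\sigma_O(E)=E+O$ is primitive, effective, and isotropic, yet $(D,O)=-1<0$, so $D$ is not nef. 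Primitive isotropic effective classes are only fibre classes up to the action of the Weyl group.

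What is missing is precisely the content of the paper's proof: one must check directly that $F'=\psi^{-1}\circ\psi_0(F)$ meets every nodal curve $C$ on $S$ non-negatively. You already have the needed ingredients. Each $\psi^{-1}\circ\psi_0(a_j)$ is a nodal curve on $S$ by Lemma \ref{PropNodal}, and $F'$ is a non-negative integral combination of them by (\ref{psiF}). If $C$ coincides with some $\psi^{-1}\circ\psi_0(a_j)$, then $(F',C)=(F,a_j)=0$ since the markings are isometries. Otherwise, the root-basis property (\ref{RootIneq}) of $B(S)$ gives $(\psi^{-1}\circ\psi_0(a_j),C)\geq 0$ for every $j$, whence $(F',C)\geq 0$. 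Since $F'$ is effective with $(F')^2=0$, this yields nefness. So your argument is repairable with one more paragraph, but as written the key step rests on a false statement.
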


\begin{proof} 
Since $a_j$ $(j\in \{0,\cdots,8\})$ is  a nodal curve on $S_0$,
due to Lemma \ref{PropNodal},
$\psi^{-1} \circ \psi_0 (a_j)$
is a nodal curve on $S$.
  Take an arbitrary nodal curve $C$ on $S$.
   If $C$ is linearly equivalent to $\psi^{-1} \circ \psi_0 (a_j)$ for some $j\in\{0,\cdots,8\}$,
 $C$ is a nodal curve in an elliptic fibre.
 So, we have  $
 (\psi^{-1}\circ \psi_0 (F),C)= 0.
 $
 If $C$ is not linearly equivalent to $\psi^{-1} \circ \psi_0 (a_j)$ $(j\in \{0,\cdots,8\})$,
 then 
 \begin{align}\label{CNef}
 (\psi^{-1} \circ \psi_0 (a_j), C) \geq 0,
 \end{align} 
 since $B(S)$ gives a root system (recall (\ref{RootIneq})).
 Due to (\ref{F-linear}),
 we have
 \begin{align}\label{psiF}
\notag
 \psi^{-1} \circ \psi_0 (F) = 
 &
 \psi^{-1} \circ \psi_0( a_0) + 2 \psi^{-1} \circ \psi_0(a_1) + 3 \psi^{-1} \circ \psi_0(a_2) + 4 \psi^{-1} \circ \psi_0(a_3)\\
 & + 5\psi^{-1} \circ \psi_0( a_4) + 6\psi^{-1} \circ \psi_0( a_5) + 3 \psi^{-1} \circ \psi_0(a_6) + 4 \psi^{-1} \circ \psi_0(a_7) + 2 \psi^{-1} \circ \psi_0(a_8)
 \end{align}
 in ${\rm NS}(S)$.
 According to (\ref{CNef}) and (\ref{psiF}),
 we have
 $
 (\psi^{-1}\circ \psi_0 (F),C)\geq 0.
 $
Therefore, 
$\psi^{-1}\circ \psi_0 (F) $ has a non-negative intersection number with
for any nodal curve on $C$. 
 This means that $\psi^{-1}\circ \psi_0 (F) $ is nef.
 \end{proof}

The orthogonal group $O(A)$ for the lattice $A$ acts on the space $\mathcal{D}_M$.
 Let $\mathcal{D}$ be a connected component of $\mathcal{D}_M$.
 Set 
 $$
 O^+(A) = \{\gamma \in O(A) \hspace{0.5mm} | \hspace{0.5mm} \gamma(\mathcal{D}) = \mathcal{D}\}.
 $$
 We set
 \begin{align}\label{GammaA}
 \Gamma = \tilde{O}^+ (A) = \tilde{O}(A) \cap O^+(A),
 \end{align}
 where  $\tilde{O}(A)$ is the stable orthogonal group of (\ref{StableO}).
 We note that $[\tilde{O}(A) : \tilde{O}^+ (A)]=2.$
 Especially, we have
 $$
 \mathcal{D}_M / \tilde{O}(A) = \mathcal{D}/\tilde{O}^+(A).
 $$
 Due to the above theorem, Theorem \ref{ThmDolg} and the definitions of period mappings,
 we have the following corollary.

 \begin{cor}\label{CorPer}
 The period mapping (\ref{P-Period}) is explicitly given by
 \begin{align}\label{PerPhi}
 \Phi: T\ni [t] \mapsto \Big(\int_{\psi^{-1} \circ \psi_0 (\gamma_1)} \omega_{[t]} : \cdots: \int_{\psi^{-1} \circ \psi_0 (\gamma_6)} \omega_{[t]} \Big) \in \mathcal{D}.
 \end{align}
 This gives an extension of (\ref{S-Per}) defined on $\mathcal{T}$.
Also, for the group $\Gamma$ of (\ref{GammaA}),
the mapping (\ref{PerPhi}) induces the isomorphism 
\begin{align}\label{PhiIso}
\Phi: T \simeq \mathcal{D}/\Gamma=Q.
\end{align}
 \end{cor}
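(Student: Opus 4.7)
The plan is to deduce the isomorphism (\ref{PhiIso}) from Dolgachev's bijection (\ref{PerIso}) by identifying the parameter space $T$ with the set of $\Gamma(M)$-orbits of marked pseudo-ample $M$-polarized $K3$ surfaces. Concretely, I would factor the map $\Phi$ through the chain
$$
T \;\longleftrightarrow\; \{P\text{-marked }K3\text{ surfaces}\}/\!\!\sim \;\longleftrightarrow\; \mathcal{M}_M^{pa}/\Gamma(M) \;\xrightarrow{per'}\; \mathcal{D}_M/\tilde{O}(A) \;=\; \mathcal{D}/\Gamma,
$$
where the leftmost equivalence is Lemma \ref{LemP-S(t)}, the middle equivalence is supplied by Theorem \ref{ThmMPAP}, and the rightmost identification is Theorem \ref{ThmDolg} together with the observation (from the excerpt) that $\tilde{O}(A)$ exchanges the two components of $\mathcal{D}_M$.

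First, I would verify that $\Phi$ is defined on the whole of $T$ and extends $\Phi_1$ on $\mathcal{T}$. By Lemma \ref{LemP-S(t)}, for every $[t] \in T$ the surface $S([t])$ admits a $P$-marking $\psi$, so the periods in (\ref{P-Period}) are well-defined. The $P$-marking is not unique, but two choices differ by an element of $\Gamma(M)$; under the injection $\Gamma(M) \hookrightarrow O(A)$ this translates into the action of $\tilde{O}(A)$ on $\mathcal{D}_M$, and after restricting to the component $\mathcal{D}$ we get the action of $\Gamma = \tilde{O}^+(A)$. Hence $\Phi([t]) \in \mathcal{D}/\Gamma$ is well-defined, coincides with $\Phi_1$ on $\mathcal{T}$ by construction, and is holomorphic by the local period theory already used for (\ref{localperiod}).

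For surjectivity, I would take $[\xi] \in \mathcal{D}$ and produce $[t] \in T$ with $\Phi([t]) = [\xi]$: Theorem \ref{ThmDolg} yields a marked pseudo-ample $M$-polarized $K3$ surface with period $[\xi]$, Theorem \ref{ThmMPAP} shows that its marking is in fact a $P$-marking, and Lemma \ref{LemP-S(t)} realises it as some $S([t])$. For injectivity, if $\Phi([t_1]) = \Phi([t_2])$ in $\mathcal{D}/\Gamma$, then (\ref{PerIso}) delivers an isomorphism of pseudo-ample $M$-polarized $K3$ surfaces $f:S([t_1]) \to S([t_2])$; since $F$ lies in $M$ and the $M$-polarization is preserved, $f$ carries the elliptic fibration of $S([t_1])$ to that of $S([t_2])$ up to $\mathrm{Aut}(\mathbb{P}^1(\mathbb{C}))$, so Lemma \ref{Lemma1} forces $[t_1]=[t_2]$.

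The main technical point I expect to require care with is the matching of equivalence relations: I must check that equivalence of $P$-marked $K3$ surfaces (defined in analogy with Definition \ref{DfS-marking}) corresponds precisely to isomorphism of marked pseudo-ample $M$-polarized $K3$ surfaces modulo the action of $\Gamma(M)$. The forward implication is immediate. The reverse implication relies on the fact that, by Theorem \ref{ThmMPAP}, a pseudo-ample $M$-polarization already encodes the nef class $F$ via (\ref{F-linear}) together with the effectivity of the $a_j$ and $b_j$, so the elliptic fibration is determined and no additional datum is being lost when we upgrade to a $P$-marking. Once this compatibility is in place, the chain above yields the desired isomorphism $\Phi:T \simeq \mathcal{D}/\Gamma = Q$.
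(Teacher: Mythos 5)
Your proposal is correct and follows essentially the same route as the paper, which gives no written proof beyond the remark that the corollary follows from Theorem \ref{ThmMPAP}, Theorem \ref{ThmDolg} and the definitions of the period mappings; your chain through $P$-marked surfaces (Lemma \ref{LemP-S(t)}), the identification with marked pseudo-ample $M$-polarized surfaces (Theorem \ref{ThmMPAP}), Dolgachev's bijection (\ref{PerIso}), and Lemma \ref{Lemma1} for injectivity is precisely the intended argument. The equivalence-relation matching you flag at the end is indeed the only delicate point, and your resolution of it is consistent with how the paper treats the analogous step in the proof of Lemma \ref{LemmaKey}.
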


 \subsection{Family $S_{CD} (\alpha,\beta,\gamma,\delta)$ }

 In \cite{CD}, the family
 \begin{align}\label{SCD}
 S_{CD}  (\alpha,\beta,\gamma,\delta): z_1^2 =y_1^3+ (-3\alpha x_1^4-\gamma x_1^5) y_1+ (x_1^5-2\beta x_1^6 +\delta x^7_1) 
 \end{align}
 of $K3$ surfaces was studied.
 This family is closely related to  famous Siegel modular forms of degree $2$.

First, we recall the following result.
 
 \begin{prop}\label{PropCD} (\cite{CD}, \cite{Kumar} or \cite{NS})
 For a generic point $(\alpha: \beta: \gamma: \delta)\in \mathbb{P}(4,6,10,12)$,
 the N\'eron-Severi lattice ${\rm NS} (S_{CD} (\alpha,\beta,\gamma,\delta))$ (transcendental lattice ${\rm Tr} (S_{CD} (\alpha,\beta,\gamma,\delta)) $, resp.) of the $K3$ surface $S_{CD} (\alpha,\beta,\gamma,\delta)$
 is 
 given by the intersection matrix
 $U\oplus E_8 (-1) \oplus E_7 (-1)$
 ($U\oplus U \oplus A_1(-1)$, resp.).
 \end{prop}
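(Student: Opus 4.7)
The plan is to mimic the analysis carried out in Section 1 for the family $S([t])$, applied now to the Weierstrass form (\ref{SCD}). First I would examine the elliptic fibration $\pi_{CD}: S_{CD}(\alpha,\beta,\gamma,\delta) \to \mathbb{P}^1(\mathbb{C})$ defined by projection onto $x_1$. Computing the discriminant of the cubic in $y_1$, one obtains a polynomial in $x_1$ whose factorization (together with the orders of vanishing of $g_2$ and $g_3$ at $x_1 = 0$ and $x_1 = \infty$) determines the Kodaira types. Applying Tate's algorithm, one sees that for generic $(\alpha:\beta:\gamma:\delta)$ the fiber $\pi_{CD}^{-1}(\infty)$ is of type $II^*$ (from the $x_1^7$ term giving the standard $E_8$ singularity at infinity) and $\pi_{CD}^{-1}(0)$ is of type $III^*$, with the remaining singular fibers all of type $I_1$.

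Next I would read off the sublattice of $\mathrm{NS}(S_{CD})$ generated by the general fiber $F$, the zero section $O$, and the components of the reducible singular fibers. By the standard structure of Kodaira fibers, the components of the $II^*$-fiber contribute a copy of $E_8(-1)$ and those of the $III^*$-fiber contribute a copy of $E_7(-1)$, while $F$ and $O$ together with the trivial lattice contribution span a hyperbolic plane $U$. This yields a primitive embedding
\begin{align*}
M_{CD} := U \oplus E_8(-1) \oplus E_7(-1) \hookrightarrow \mathrm{NS}(S_{CD}(\alpha,\beta,\gamma,\delta)),
\end{align*}
of rank $17$. Primitivity follows, for instance, from the fact that $|\det M_{CD}| = 2$ is prime, forcing the saturation to coincide with $M_{CD}$ unless the Picard number jumps, which is a non-generic condition.

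To conclude that equality $\mathrm{NS}(S_{CD}) = M_{CD}$ holds generically, I would run the same injectivity argument used in Section 1.4. Namely, develop the analogue of Lemma \ref{Lemma1}: any isomorphism of elliptic surfaces between two generic members $S_{CD}$ must come from an automorphism of $\mathbb{P}^1(\mathbb{C})$ fixing $0$ and $\infty$, hence acts as $x_1 \mapsto \lambda x_1$, and reading off the weighted coefficients of the Weierstrass equation forces the parameters to agree in $\mathbb{P}(4,6,10,12)$. Then the analogue of Lemma \ref{LemmaKey} gives injectivity of $S$-marking, and applying the local Torelli theorem to the corresponding local period mapping into the $3$-dimensional type-$IV$ domain attached to $U \oplus U \oplus A_1(-1)$ forces the Picard number to equal $17$; otherwise the period map would factor through a lower-dimensional target and contradict injectivity on a $3$-dimensional parameter space.

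Finally, once $\mathrm{NS}(S_{CD}) = M_{CD}$ is established, the transcendental lattice is its orthogonal complement in the $K3$ lattice $L = U^{\oplus 3} \oplus E_8(-1)^{\oplus 2}$. A direct lattice-theoretic computation (using uniqueness of even lattices of small rank with prescribed signature and discriminant form, à la Nikulin) identifies this orthogonal complement as $U \oplus U \oplus A_1(-1)$. The main obstacle is the generic Picard-number bound $\rho = 17$, which is the non-formal step requiring the Torelli-type injectivity argument; the rest is essentially bookkeeping of Kodaira fibers and a lattice computation.
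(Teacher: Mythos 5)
Your proposal is essentially correct, but note that the paper itself offers no proof of Proposition \ref{PropCD}: it is quoted from \cite{CD}, \cite{Kumar} and \cite{NS}. What you have written is a genuine argument, and it is in fact a faithful transplant of the strategy the paper uses for its \emph{own} family in Section 1 (the fibre-type computation, the trivial-lattice bookkeeping, the primitivity-from-squarefree-discriminant step, and the Torelli-plus-injectivity argument of Lemma \ref{Lemma1}, Lemma \ref{LemmaKey} and the Picard number theorem), so methodologically you are aligned with the paper even though the paper delegates this particular statement to the literature. Two small corrections. First, your fibre labels are swapped: with $g_2=-3\alpha x_1^4-\gamma x_1^5$ and $g_3=x_1^5-2\beta x_1^6+\delta x_1^7$ one has $(v(g_2),v(g_3))=(4,5)$ at $x_1=0$, giving type $II^*$ there, while at $x_1=\infty$ one has $(v(g_2),v(g_3))=(8-5,12-7)=(3,5)$, giving type $III^*$; this is harmless for the lattice conclusion but your parenthetical justification (``the $x_1^7$ term giving $E_8$ at infinity'') is the wrong way around. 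Second, the primitivity step needs no caveat about the Picard number jumping: if $N$ is the saturation of $M_{CD}$ in $\mathrm{NS}$, then $[N:M_{CD}]^2$ divides $|\det M_{CD}|=2$, so the index is $1$ unconditionally — exactly the argument the paper uses for $|\det M|=3$. The rest (local Torelli forcing $\rho=17$ generically via the dimension count against the $3$-dimensional type $IV$ domain, and the identification of the orthogonal complement of $U\oplus E_8(-1)\oplus E_7(-1)$ in $L$ as $U\oplus U\oplus A_1(-1)$ via Nikulin) is sound.
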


 Let us recall the meaning and the importance
 of this family.
 By taking the subspace 
 $$
 T_0 = \{(\alpha:\beta :\gamma :\delta)\in \mathbb{P}(4,6,10,12) \hspace{0.5mm} | \hspace{0.5mm} \gamma=\delta=0 \},
 $$ 
 we can obtain the period mapping
 \begin{align} \label{PerSiegel}
T_0 \ni (\alpha:\beta:\gamma:\delta) \mapsto [\xi_0] \in \mathfrak{S}_2/Sp(4,\mathbb{Z}),
 \end{align}
 where $\mathfrak{S}_2$ is the Siegel upper half plane of degree $2$ and $Sp(4,\mathbb{Z})$ is the symplectic group.
 By the inverse period mapping (\ref{PerSiegel}),
 $\alpha$ ($\beta, \gamma,\delta$, resp.)
 gives a Siegel modular form of weight $4$ ($6$, $10$, $12$, resp.).
More precisely,
we have the following result.

\begin{prop}(\cite{I35}, \cite{I},   \cite{CD}, \cite{Kumar} and \cite{NS}) \label{PropSiegelModular}
(1) 
The ring of Siegel modular forms on $\mathfrak{S}_2$ for the group $Sp(4,\mathbb{Z})$ of even weight and the trivial character   is generated  by modular forms $\alpha,\beta,\gamma$ and $\delta$ of weight $4,6,10$ and $12$, respectively.
Also, there exists a modular form $\chi_5$ ($\chi_{30}$, resp.) of weight $5$ ($30$, resp.) and  non-trivial character.
Moreover,
setting
$\chi_{35}=\chi_{5} \chi_{30}$,
the space of Siegel modular forms on $\mathfrak{S}_2$ for the group $Sp(4,\mathbb{Z})$ of odd weight and the trivial character is equal to $\chi_{35}\mathbb{C}[\alpha,\beta,\gamma,\delta]$.

(2) More precisely,
$\alpha, \beta, \gamma$ and $\delta$ have explicit expressions by the Igusa invariants:
\begin{align}\label{alphaIgusa}
\begin{cases}
&\alpha = \frac{1}{9} I_4, \quad \beta=\frac{1}{27}(-I_2 I_4 +3 I_6),  \\
&\gamma=8 I_{10}, \quad \delta=\frac{2}{3}I_{2}I_{10}.
\end{cases}
\end{align}
\end{prop}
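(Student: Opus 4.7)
Both parts of this proposition are classical, and my plan is to assemble them directly from the cited references rather than re-deriving them. For part (1), I would invoke Igusa's structure theorem \cite{I35}, \cite{I}: the ring of Siegel modular forms of degree $2$ for $Sp(4,\mathbb{Z})$ of even weight and trivial character is freely generated by four Eisenstein-theoretic forms of weights $4,6,10,12$. Igusa further constructs an odd-weight cusp form $\chi_{35}$ whose square lies in this polynomial ring, and factors $\chi_{35}=\chi_5\chi_{30}$ into half-integral characteristic theta products with non-trivial characters of orders dividing $2$. The characterization of the odd-weight trivial-character space as $\chi_{35}\mathbb{C}[\alpha,\beta,\gamma,\delta]$ follows because any modular form of odd weight and trivial character for $Sp(4,\mathbb{Z})$ must vanish on the locus where $\chi_{35}$ vanishes (the diagonal locus), and dividing by $\chi_{35}$ produces an even-weight form in the polynomial subring.

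For part (2), the strategy is to use the Shioda-Inose structure on $S_{CD}(\alpha,\beta,\gamma,\delta)$. By Proposition \ref{PropCD}, the transcendental lattice $U\oplus U\oplus A_1(-1)$ of $S_{CD}(\alpha,\beta,\gamma,\delta)$ agrees Hodge-isometrically with the transcendental lattice of the Kummer surface of a generic principally polarized abelian surface. This gives a canonical identification of the period mapping (\ref{PerSiegel}) with the period mapping from $\mathfrak{S}_2/Sp(4,\mathbb{Z})$ to the Hodge-theoretic moduli space, under which $(\alpha,\beta,\gamma,\delta)$ pull back to Siegel modular forms of weights $(4,6,10,12)$. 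To identify these with the Igusa invariants, I would start from an abelian surface $A$ with given Igusa invariants $(I_2,I_4,I_6,I_{10})$, construct the Kummer surface ${\rm Km}(A)$ and its Shioda-Inose partner, and exhibit this partner in the Weierstrass form (\ref{SCD}). Matching the resulting coefficients to $(\alpha,\beta,\gamma,\delta)$ gives the formulas (\ref{alphaIgusa}).

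The explicit matching is the substantive step; it is carried out by Clingher-Doran \cite{CD} (via Weierstrass-model bookkeeping on a Shioda-Inose twin) and by Kumar \cite{Kumar} (via modular parametrization), and reproduced in the form we need in \cite{NS}. The main obstacle is the tracking of normalizations: the $I_k$ depend on the chosen invariant scaling of genus-two binary sextics, while the $(\alpha,\beta,\gamma,\delta)$ depend on how (\ref{SCD}) is written relative to the Kodaira type $II^*+III^*+\cdots$ configuration. Once one fixes the scaling so that $\alpha$ is proportional to $I_4$ at leading order (weight $4$ uniquely determines this up to a constant), the remaining constants $\tfrac19,\tfrac{1}{27},8,\tfrac23$ are forced by the explicit polynomial identities relating the discriminant of $S_{CD}$ to $I_{10}$ and to $I_2I_4-3I_6$, which are the computations recorded in \cite{CD}, \cite{Kumar}, \cite{NS}. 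We therefore cite those references for the verification and do not reproduce the calculation here.
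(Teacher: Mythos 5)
Your proposal is correct and takes the same route as the paper, which states this proposition purely as a citation of \cite{I35}, \cite{I}, \cite{CD}, \cite{Kumar} and \cite{NS} and offers no proof of its own; your sketch (Igusa's structure theorem plus the vanishing of odd-weight forms on the reducible locus for part (1), and the Shioda--Inose/Weierstrass matching carried out in the cited works for part (2)) is the standard assembly of those sources. One small correction: the Hodge isometry underlying the Shioda--Inose structure identifies $U\oplus U\oplus A_1(-1)={\rm Tr}(S_{CD})$ with the transcendental lattice of the principally polarized abelian surface itself, not of its Kummer surface, whose transcendental lattice is the twist $U(2)\oplus U(2)\oplus \langle -4\rangle$.
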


 There exist relations which determine the structure of the ring of Siegel modular forms of degree $2$.
 Namely, 
 $\chi_5^2$ 
 and $\chi_{30}^2$  
 are given by the polynomials in $\alpha,\beta,\gamma$ and $\delta$.
 These relations are calculated by the explicit defining equation (\ref{SCD}) of $K3$ surfaces as follows.
 Let 
 $$R_0 (x_1)=
  \frac{1}{x_1^{10}}(\text{ the discriminant of the right hand side of (\ref{SCD}) in } y_1).
  $$
 Then, the discriminant of the polynomial $R_0(x_1)$ in $x_1$
 is given by the product of 
 $\gamma^3$,
 $r_0^3$ and $d_0$ up to a constant factor.
  Here, $r_0$ is equal to the resultant  of  
  $\frac{g_2(x_1)}{x_1^4}$ and that of $\frac{g_3(x_1)}{x_1^5}$,
 where we express (\ref{SCD}) as $z_1^2 = y_1^3 - g_2(x_1) y_1 - g_3(x_1)$.
 Also, $d_0$ is a polynomial of weight $60$ in $\alpha,\beta,\gamma$ and $\delta$. 
  The factor $\gamma^3$ implies that the square of $\chi_5$ is equal to $\gamma$ up to a constant factor.
 Also,
 via (\ref{alphaIgusa}),
 we can see that $d_0$  is essentially equal to a factor of weight $60$ 
  in the famous relation for $\chi_{30}^2$
  by Igusa (see \cite{I35} p.849)
 up to a constant factor.

 Thus,  the famous Siegel modular forms of degree $2$ 
 can be calculated by
 inverse period mappings for this family of $K3$ surfaces (\ref{SCD}).

 Our  $M$-polarized $K3 $ surfaces $S([t])$ $([t]\in T)$ is  
 closely related to the Clingher-Doran family as follows.

 \begin{prop}\label{PropS(t)CD}
 The family  $\{S_{CD} (\alpha,\beta,\gamma,\delta) \hspace{0.5mm} | \hspace{0.5mm} (\alpha:\beta: \gamma: \delta) \in T_0 \}$ can be regarded as a subfamily  of $\{S([t]) \hspace{0.5mm} | \hspace{0.5mm} [t] \in T \}$ corresponding to $t_{18}=0.$
 \end{prop}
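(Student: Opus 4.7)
The plan is to construct an explicit change of coordinates between the Weierstrass model (\ref{S(t)}) specialised to $t_{18}=0$ and the Weierstrass model (\ref{SCD}) of the Clingher--Doran family, and then to read off the parameter correspondence by matching coefficients.

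First, as a sanity check on the statement, I will compare the Kodaira types of the singular fibers on both sides. Specialising $t_{18}=0$ in (\ref{S(t)}) makes $b(x_0,t) = x_0^{7}+t_6 x_0^{6}+t_{12}x_0^{5}$ divisible by $x_0^{5}$ rather than by $x_0^{4}$, so the Weierstrass orders $(\mathrm{ord}_0 a,\,\mathrm{ord}_0 b,\,\mathrm{ord}_0 \Delta)$ at $x_0=0$ change from $(3,4,8)$ to $(3,5,9)$; hence the fiber of $\pi$ over $x_0=0$ enhances from Kodaira type $IV^*$ to type $III^*$, while the fiber over $x_0=\infty$ remains of type $II^*$. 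This $E_8\oplus E_7$ configuration exactly matches the one for $S_{CD}$ recorded in Proposition \ref{PropCD}.

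Second, because in $S_{CD}$ the $II^*$ fiber sits over $x_1=0$ whereas in $S([t])|_{t_{18}=0}$ it sits over $x_0=\infty$, I will perform the base change $x_0=1/x_1$ together with the Weierstrass rescaling $y_0 = Y/x_1^{4}$, $z_0 = Z/x_1^{6}$. After multiplying through by $x_1^{12}$, the defining equation of $S([t])|_{t_{18}=0}$ becomes a polynomial Weierstrass equation in $(x_1,Y,Z)$, and comparing its coefficients term by term with (\ref{SCD}) yields the explicit correspondence
\[
(t_4, t_6, t_{10}, t_{12}, t_{18}) = (-3\alpha,\, -2\beta,\, -\gamma,\, \delta,\, 0).
\]
This correspondence respects the $\mathbb{C}^*$-action with weights $(4,6,10,12)$ on both sides, so it descends to a well-defined closed embedding of the weighted projective space $\mathbb{P}(4,6,10,12)$ onto $\{[t]\in \hat{T} : t_{18}=0\}$; the image lies in $T$ by (\ref{TDef}) since $t_{10}$ and $t_{12}$ are not forced to vanish.

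I do not expect any real obstacle. Once the match of Kodaira types has been checked, the substitution above is essentially forced, and the remaining work reduces to an elementary polynomial computation and bookkeeping of weights. The only minor subtlety is the normalization of the scaling factor $\lambda$ in $x_0=\lambda/x_1$, which is pinned down by $\lambda^{7}=1$ (the leading term of $b$); different choices of this seventh root of unity are absorbed by the $\mathbb{C}^*$-action $t\mapsto \lambda'\cdot t$ on the parameter space and therefore do not affect the statement.
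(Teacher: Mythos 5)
Your proposal is correct and follows essentially the same route as the paper: the coordinate change $x_1=1/x_0$, $y_1=y_0/x_0^4$, $z_1=z_0/x_0^6$ (clearing $x_0^{12}$) and coefficient matching yielding $t_4=-3\alpha$, $t_6=-2\beta$, $t_{10}=-\gamma$, $t_{12}=\delta$, $t_{18}=0$, which is exactly the substitution the paper uses. Your preliminary check of the Kodaira types ($IV^*\to III^*$ at $x_0=0$ when $t_{18}=0$) is a useful consistency check but is not needed for the argument, and the closing remark about normalizing $\lambda$ via $\lambda^7=1$ is slightly loose (the Weierstrass rescaling of $y,z$ also enters the leading coefficient), though this does not affect the proof.
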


 \begin{proof}
 By putting $x_1=\frac{1}{x_0},y_1=\frac{y_0}{x_0^4}$ and $z_1=\frac{z_0}{x_0^6}$, 
 (\ref{SCD}) is transformed to
 \begin{align}\label{align}
 z_0^2=y_0^3 + (-3 \alpha  x_0^4-\gamma x_0^3)y_0 + (x_0^7 -2\beta x_0^6 + \delta x_0^5).
 \end{align}
 This is equal to the form which is coming from (\ref{S(t)}) by the substitution
 $x=\frac{x_0}{w^6},y=\frac{y_0}{w^{14}},z=\frac{z_0}{w^{21}}, t_4=-3\alpha,  t_6=-2\beta, t_{10}=-\gamma, t_{12}=\delta$ and $t_{18}=0.$  
 \end{proof}

 We shall use Proposition \ref{PropS(t)CD}
 to prove the main theorem at end of this paper.

 \section{Lattice with the Kneser conditions}

 \subsection{The Kneser conditions}
 
 In this section,
 we survey arithmetic properties of lattices.
 They are useful to study our period mapping of $K3$ surfaces precisely.
 
 Let $\mathcal{K}$ be an even integral lattice.
 Let $\mathcal{K}^\vee$ be its dual lattice.
 As in Section 2, letting $O(\mathcal{K})$ be the orthogonal group for the lattice $\mathcal{K}$,
 we define the stable orthogonal group
$
 \tilde{O} (\mathcal{K}) = {\rm Ker} (O(\mathcal{K}) \rightarrow O(\mathcal{K}^\vee /\mathcal{K}))$.
We can define the spinor norm $sn_\mathbb{R}: O(\mathcal{K}\otimes\mathbb{R}) \rightarrow \{\pm 1 \}$.
We set $O^+(\mathcal{K}) = O(\mathcal{K}) \cap {\rm Ker}(sn_\mathbb{R})$
 and
 \begin{align}\label{Gamma}
 \Gamma = \tilde{O}^+ (\mathcal{K}) = \tilde{O} (\mathcal{K}) \cap O^+(\mathcal{K}). 
 \end{align}

 \begin{df}
 If an even integral lattice $\mathcal{K}$ satisfies the following conditions,
 we say that $\mathcal{K}$ satisfies the Kneser conditions:
 \begin{itemize}
 \item[(i)] For the signature $(s_+,s_-)$ of $\mathcal{K} $, ${\rm min}(s_+,s_-)\geq 2$,
  
 \item[(ii)] There exists $x\in \mathcal{K}$ such that $(x, x) =-2$,
 
 \item[(iii)] ${\rm rank}_2 (\mathcal{K}) \geq 6$,
 
 \item[(iv)] ${\rm rank}_3 (\mathcal{K}) \geq 5$.
 \end{itemize}
Here, ${\rm rank}_p (\mathcal{K})$ for a prime number $p$ means the rank of the mod $p$ reduction of $\mathcal{K}$. 
 \end{df}
 
 Such lattices were firstly studied by Kneser \cite{Kneser}.
We will use the following result due to Gritsenko-Hulek-Sankaran. 
 
 \begin{thm}(\cite{GHS}) \label{ThmKneser}
 Suppose $\mathcal{K}$ satisfies the Kneser conditions and contains at least two hyperbolic planes $U$.
 
(1) The group $\Gamma$ of (\ref{Gamma}) is generated by reflections
$
 \sigma_\delta: z\mapsto z+ (z,\delta) \delta
 $
  $(\delta\in \mathcal{K}$, $(\delta, \delta)=-2)$.
 
 (2) 
 $
 {\rm Char} (\Gamma) =\{{\rm id}, {\rm det}\}.
 $
 \end{thm}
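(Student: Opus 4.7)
The plan is to prove both assertions via Eichler's theory of transvections combined with a spinor-norm analysis, following the strategy of \cite{GHS}. As a preliminary, for any $\delta\in \mathcal{K}$ with $(\delta,\delta)=-2$, the reflection $\sigma_\delta(v)=v+(v,\delta)\delta$ satisfies $\sigma_\delta(v)-v\in \mathcal{K}$ for every $v\in \mathcal{K}^\vee$, so $\sigma_\delta$ acts trivially on $\mathcal{K}^\vee/\mathcal{K}$ and hence lies in $\tilde{O}(\mathcal{K})$. Each such reflection has spinor norm $-1$, so to land in $\Gamma=\tilde{O}^+(\mathcal{K})$ one works with even-length products of reflections, or composes with a second $-2$ reflection whose spinor norm has the opposite sign; the assumption $\min(s_+,s_-)\ge 2$ guarantees that both signs are realized by $-2$ vectors.

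For part (1), I would exploit the presence of two hyperbolic summands $U\oplus U$. By a theorem of Eichler, $\tilde{O}(\mathcal{K})$ is generated by Eichler transvections $t(e,a)$, where $e\in \mathcal{K}$ is primitive isotropic and $a\in e^\perp$. A classical identity, worked out systematically in \cite{Kneser}, expresses each such transvection as a product of reflections $\sigma_\delta$ with $(\delta,\delta)=-2$, provided enough $-2$ vectors exist in the orthogonal complement of $e$ that appears. The Kneser conditions $\mathrm{rank}_2(\mathcal{K})\ge 6$ and $\mathrm{rank}_3(\mathcal{K})\ge 5$ are exactly the local hypotheses under which Kneser's local-global principle produces the required $-2$ vectors at every step. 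Passing from $\tilde{O}(\mathcal{K})$ to $\tilde{O}^+(\mathcal{K})$ then amounts to verifying that reflections of both spinor-norm signs are available so as to adjust the total sign to $+1$, which again rests on the indefiniteness hypothesis.

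For part (2), the characters of $\Gamma$ factor through its abelianization. Since $\Gamma$ is generated by the involutions $\sigma_\delta$, each character $\chi$ sends $\sigma_\delta$ to $\pm 1$. The key point is that any two $-2$ vectors of $\mathcal{K}$ lie in a single $\Gamma$-orbit: they have the same length and the same (zero) image in the discriminant group, and under the Kneser conditions Eichler's transitivity result implies they are connected by a transvection already realized inside $\tilde{O}^+(\mathcal{K})$. Consequently $\chi(\sigma_\delta)$ is independent of $\delta$, so $\chi$ is determined by a single sign, yielding $\mathrm{Char}(\Gamma)\subseteq\{\mathrm{id},\det\}$; the reverse inclusion is clear since $\det\sigma_\delta=-1$.

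The main obstacle I expect is the reflection-decomposition step in the first part: producing a suitable $-2$ vector inside each orthogonal complement of an isotropic vector that arises along the way, and simultaneously controlling its spinor norm. This is precisely where the numerical hypotheses $\mathrm{rank}_2\ge 6$ and $\mathrm{rank}_3\ge 5$ become essential, since they unlock Kneser's local-global results on the representation of $-2$ by quadratic forms in rank $\ge 5$. Once this existence step is in hand, the rest is a systematic combination of Eichler's generation theorem, the spinor-norm bookkeeping, and the transitivity of $\Gamma$ on primitive $-2$ vectors.
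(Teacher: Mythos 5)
First, a point of comparison: the paper does not prove this theorem at all --- it is quoted directly from Gritsenko--Hulek--Sankaran \cite{GHS}, so there is no internal proof to measure your argument against. Your overall strategy (Eichler transvections for lattices containing $U\oplus U$, Kneser's local conditions to decompose transvections into $(-2)$-reflections, and transitivity on $(-2)$-vectors to pin down the characters) is indeed the skeleton of the argument in \cite{GHS} and \cite{Kneser}, so the route is the right one in outline.

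However, there is a concrete error in your spinor-norm bookkeeping which, taken literally, contradicts the statement you are proving. The real spinor norm of a reflection $\sigma_v$ depends only on the sign of $(v,v)$; hence all reflections in vectors with $(\delta,\delta)=-2$ have the \emph{same} real spinor norm, and your claim that ``both signs are realized by $-2$ vectors'' is false --- the hypothesis $\min(s_+,s_-)\ge 2$ buys you nothing here. Under the normalization implicit in the theorem (the one used in \cite{GHS}, where $O^+$ is the stabilizer of a connected component of the period domain), a $(-2)$-reflection fixes a hyperplane of signature $(2,n-1)$ that meets $\mathcal{D}$, hence preserves each component and lies in $O^+(\mathcal{K})$; its real spinor norm is $+1$, not $-1$. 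If your claim $sn_{\mathbb{R}}(\sigma_\delta)=-1$ were correct, no $\sigma_\delta$ would lie in $\Gamma$, part (1) as stated would be false, and the even-length products you propose would generate only the determinant-one subgroup, on which $\det$ is the trivial character --- contradicting part (2). Your argument for part (2) then quietly reverts to assuming that $\Gamma$ is generated by the involutions $\sigma_\delta$ themselves, so the proposal is internally inconsistent; the fix is only a matter of getting the sign convention right, but as written the proof does not go through. A smaller issue: in part (2) you assert that every $(-2)$-vector has zero image in the discriminant group, which requires $\mathrm{div}(\delta)=1$; this holds for the paper's lattice $A$ (whose discriminant group has odd order) but needs justification, or a correct appeal to Eichler's criterion, in the generality of the theorem.
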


 In order to apply this theorem to period mappings for $K3$ surfaces, we will consider cases that the signature of $\mathcal{K}$ is of type $(2,n)$.
 We note that
 the lattice $A$ of (\ref{latticeA}) satisfies the conditions of the above theorem.
 In fact,
 our lattice $A$ is the simplest lattice among such lattices.

 \subsection{Comparison with the results of \cite{CD},  \cite{MSY} and \cite{CMS}}

  In this subsection,
  let us see features of our family of $K3$ surfaces
  by comparing other famous families of  polarized $K3$ surfaces.

  First,
  let us recall the family of $K3$ surfaces in Section 2.3 due to \cite{CD}.
  The lattice
  $$A_S = U \oplus U \oplus A_1(-1).$$
  does not satisfy the Kneser conditions.
 Nevertheless, this lattice also has good properties.
 We can see that
  $
  O(A_S)
  $
  is equal to
  the stable orthogonal group
  $\tilde{O}(A_S)$.
  According to Proposition \ref{PropSiegelModular},
  the classical Siegel modular forms of degree $2$ are given by the modular forms for the lattice $O(A_S)$.
  Moreover,
  in \cite{GN},
  it  is proved that
   the group $O(A_S) $ is generated by 
  reflections for vectors with self-intersection $-2$
  and
  its character group
  is isomorphic to $\mathbb{Z}/2\mathbb{Z}$.
  Thus,
    the lattice $A_S$
    has
    good arithmetic properties.

  Let us see another  example in a famous work due to Matsumoto-Sasaki-Yoshida \cite{MSY}.
  They studied a family of  $K3$ surfaces with four complex parameters.
  From the viewpoint of  hyperplane arrangements and differential equations,
  their family is very natural and interesting.
  Their $K3$ surfaces are given by the double covering
  of $\mathbb{P}^2(\mathbb{C})$
  blanched along six lines.
  The configuration of these lines is coming from the Grassmannian manifold $Gr(3,6)$.
 The periods for their $K3$ surfaces
 satisfy a system of partial differential equations,
 which is called a hypergeometric equation of type $(3,6)$.
 This system gives a natural extension of the classical Gauss hypergeometric equation.
Here, a generic member of their family is a lattice polarized $K3$ surface with the transcendental lattice
 $$A_{MSY} = U(2) \oplus U(2) \oplus (-I_2 (2))$$
of signature $(2,4)$. 
We note that
this lattice does not satisfy the Kneser conditions.
We note that Matsumoto \cite{Matsumoto} studied modular forms
 for the group $\Gamma_{A_{MSY}}$,
 which is given by the lattice $A_{MSY}$,
  coming from the inverse period mapping of a family of $K3$ surfaces with four parameters.
 However,
 the modular group $\Gamma_{A_{MSY}}$
 has more complicated structure
 than our modular group.
Namely,  it is generated by
 reflections  for vectors with self-intersection not only $-2$ but also  $-4$.

 The family of \cite{MSY} was
 studied more thoroughly
 in
   a recent work of Clingher-Malmendier-Shaska \cite{CMS} as follows.
   They explicitly determined
  the family of the van Geemen-Sarti partners of the $K3$ surfaces of \cite{MSY}.
 Namely, a generic member of \cite{CMS} has a special Nikulin involution which induces a double covering of a generic member of \cite{MSY}.
Here, a generic member of this family  is a lattice polarized $K3$ surface  with the transcendental lattice
 $$
 A_{CMS} = U \oplus U\oplus (- I_2(2)).
 $$
 The lattice $A_{CMS}$ does not satisfy the Kneser conditions also.
  The modular group $\Gamma_{A_{CMS}}$ corresponding to the lattice $A_{CMS}$ is isomorphic to a finite extension of $\Gamma_{A_{MSY}}$.
Also, we note that this family can be applied to string theory (see \cite{CMS} and \cite{CHM}).

  From the viewpoint of Dynkin diagrams and elliptic $K3$ surfaces,
our family corresponds to $E_8$ and $E_6$ as in Section 1.2,
while the family of \cite{CMS} corresponds to $E_7$ and $E_7$.
Each our family and the family of \cite{CMS} respectively gives a natural extension of the family of \cite{CD},
which  corresponds to $E_8$ and $E_7$.

 The purpose of this paper
 is to obtain modular forms for our modular group $\Gamma = \tilde{O}^+(A)$ of (\ref{GammaA})
 by the inverse period mapping of our $K3$ surfaces.
Our modular forms are defined on a $4$-dimensional bounded  symmetric domain of type $IV$, which is the homogeneous space for the Lie group $SO_0(2,4) \simeq SU(2,2)$.
Also, the modular forms coming from the families \cite{MSY} and \cite{CMS}
are also defined on the symmetric domain corresponding to $SU(2,2)$.
 However, 
 our modular forms are different from their modular forms,
 since our modular group $\Gamma$ is different from $\Gamma_{A_{MSY}}$ and $\Gamma_{A_{CMS}}$.
Each result
has different meanings and features respectively. 
For example,
it is one of the features of our result  that 
our lattice $A$ and our modular group $\Gamma$ have nice arithmetic properties as in Theorem \ref{ThmKneser}.
 In the following contents, 
 in order to study our modular forms,
we  will apply  techniques of canonical orbibundles over  orbifolds derived from the action of $\Gamma$ on $\mathcal{D}$.  
Such techniques are based on the nice properties of $\Gamma$.

 \subsection{Structure of branches}

 We can apply 
  Theorem \ref{ThmKneser}
  to our lattice $A$.
 So, the group $\Gamma = \tilde{O}^+ (A)$ is generated by the reflections $\sigma_\delta$ such that $(\delta,\delta)=-2$.
 Let $\Delta(A)$ be the set of vectors in $A$ with self-intersection $-2$.

\begin{lem} \label{LemGammaProj}
One has $O^+(A)/\Gamma \simeq \mathbb{Z}/2\mathbb{Z}$ and $-{\rm id}_{O^+(A)} \not\in \Gamma$.
Especially, $\Gamma$ is isomorphic to the projective group $PO^+(A)$.
\end{lem}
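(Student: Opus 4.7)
The plan is to produce the nontrivial coset representative in $O^+(A)/\Gamma$ explicitly as $-\mathrm{id}_A$, after first bounding the index from above by invoking the remark that precedes. Since $A = U \oplus U \oplus A_2(-1)$ has $O(q_A) \cong \mathbb{Z}/2\mathbb{Z}$, the remark after Theorem 2.2 (or rather \cite{GHSz}) gives $[O(A):\tilde O(A)]=2$. Restricting to $O^+(A)$ and using $\Gamma = \tilde O(A) \cap O^+(A)$, the quotient $O^+(A)/\Gamma$ injects into $O(A)/\tilde O(A) \cong \mathbb{Z}/2\mathbb{Z}$, so the index is at most $2$.

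Next I would check the two claims about $-\mathrm{id}_A$. For $-\mathrm{id}_A \in O^+(A)$: the map $\xi \mapsto -\xi$ on $A \otimes \mathbb{C}$ descends to the identity on $\mathbb{P}(A \otimes \mathbb{C})$, hence it acts trivially on $\mathcal{D}_M$ and a fortiori preserves the chosen connected component $\mathcal{D}$, which is the definition of $O^+(A)$ used in Section 2.2. For $-\mathrm{id}_A \notin \tilde O(A)$: since $U$ is unimodular, $A^\vee/A$ is isomorphic to the discriminant group $A_2(-1)^\vee/A_2(-1) \cong \mathbb{Z}/3\mathbb{Z}$, on which $-\mathrm{id}$ acts as multiplication by $-1$. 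This is a nontrivial automorphism of $\mathbb{Z}/3\mathbb{Z}$, so $-\mathrm{id}_A$ does not lie in the kernel $\tilde O(A)$. Combining these two observations with the upper bound gives $[O^+(A):\Gamma]=2$ and $O^+(A) = \Gamma \sqcup (-\mathrm{id}_A)\Gamma$, proving the first two assertions.

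For the identification with $PO^+(A)$ I would consider the composition
\[
\Gamma \hookrightarrow O^+(A) \twoheadrightarrow PO^+(A) = O^+(A)/\{\pm \mathrm{id}_A\}.
\]
Its kernel equals $\Gamma \cap \{\pm\mathrm{id}_A\}$, which is trivial by what was just proved. For surjectivity, any $\gamma \in O^+(A)$ is either in $\Gamma$ or in $(-\mathrm{id}_A)\Gamma$, so in either case its class modulo $\{\pm\mathrm{id}_A\}$ is represented by an element of $\Gamma$. This yields the desired isomorphism $\Gamma \simeq PO^+(A)$.

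I do not anticipate a serious obstacle: the only point needing a brief verification is that $-\mathrm{id}_A$ lies in $O^+(A)$, which is immediate from the projective triviality of $-\mathrm{id}_A$ on $\mathbb{P}(A\otimes\mathbb{C})$, and the discriminant-group computation, which uses only the unimodularity of $U$ and the standard fact that $A_2$ has discriminant $3$.
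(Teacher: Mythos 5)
Your argument is correct and follows essentially the same route as the paper: both rest on the computation $A^\vee/A \cong \mathbb{Z}/3\mathbb{Z}$ (so that the relevant automorphism group is $\mathbb{Z}/2\mathbb{Z}$) together with the observation that $-\mathrm{id}_A$ preserves $\mathcal{D}$ but acts nontrivially on the discriminant group. The paper compresses the index count and the identification $\Gamma \simeq PO^+(A)$ into a ``direct check,'' whereas you spell out the injection $O^+(A)/\Gamma \hookrightarrow O(A)/\tilde{O}(A)$ and the kernel/surjectivity argument; the underlying content is the same.
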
 
 
 \begin{proof}
 Let $\{v_1,v_2\}$ be a system of basis of $A_2(-1) (\subset A)$ such that $(v_1\cdot v_1)=(v_2\cdot v_2)=-2$ and $(v_1\cdot v_2)=1.$
 Setting $y_1=\frac{1}{3} v_1 + \frac{2}{3} v_2 \mod A$ and $y_2=\frac{2}{3}v_1 +\frac{1}{3} v_2 \mod A$,
 we obtain $A^\vee / A =\{0,y_1,y_2\}$.
 So, by the definition (\ref{GammaA}) of $\Gamma$, we can directly check that $O^+(A)/\Gamma \simeq \mathbb{Z}/2\mathbb{Z}$ and $-{\rm id}_{O^+(A)} \not\in \Gamma$.
 Also, we can see that $\Gamma \simeq PO^+(A)$.
 \end{proof}

By the above lemma,
 the action of $\Gamma$ on $\mathcal{D} $ is effective, since that of $PO^+(A)$ on $\mathcal{D}$ is so.
 Now,
 let us consider the set  $\left\{[Z] \in \mathcal{D} \hspace{0.5mm} | \hspace{0.5mm} g ([Z]) = [Z] \right\}$
 of fixed points
 for  $g\in \Gamma$
 and the union
 \begin{align*}
\mathfrak{H}_{\mathcal{D}} = \bigcup_{g \in \Gamma} \left\{ [Z] \in \mathcal{D} \hspace{0.5mm} |\hspace{0.5mm} g([Z]) =  [Z] \right\}.
\end{align*}
 Also, letting
 $\Gamma_{[Z]}=\left\{g\in \Gamma \hspace{0.5mm}|\hspace{0.5mm} g([Z])= [Z] \right\}$
 be the stabilizer subgroup  with respect to $[Z]\in \mathcal{D} $,
 we set
 \begin{align*}
 \mathfrak{S}_{\mathcal{D}} =\left\{[Z]\in \mathcal{D} \hspace{0.5mm} | \hspace{0.5mm} \Gamma_{[Z]} \text{ is neither } \{{\rm id_\Gamma}\} \text{ nor } \{{\rm id_\Gamma}, \sigma_\delta \}\simeq \mathbb{Z}/2\mathbb{Z} \text{ for } \delta \in \Delta (A) \right\}.
 \end{align*}
 Then,
 $\mathfrak{H}_{\mathcal{D}}$ ($\mathfrak{S}_{\mathcal{D}}$, resp.) is a countable union of hypersurfaces
 (an analytic subset of codimension at least $2$, resp.),
 since $\Gamma$ satisfies Theorem \ref{ThmKneser} and $\mathcal{D}$ is a subset of projective space.

 Any points $[Z]$ such that $\Gamma_{[Z]} \not=\{{\rm id_\Gamma}\}$   are contained in 
  $\mathfrak{H}_{\mathcal{D}} \cup \mathfrak{S}_{\mathcal{D}}$.
Therefore,
 the action of $\Gamma$ on $\mathcal{D} - (\mathfrak{H}_{\mathcal{D}} \cup \mathfrak{S}_{\mathcal{D}})$ is free.
 Also,
  $\Gamma_{[Z]}$ for $[Z]  \in \mathfrak{H}_{\mathcal{D}}- (\mathfrak{H}_{\mathcal{D}} \cap \mathfrak{S}_{\mathcal{D}})$
 is given by
 $\{{\rm id}, \sigma_\delta \}$
 for some $\delta \in \Delta (A).$
 Let  $\mathfrak{H}_{Q}$ and $\mathfrak{S}_{Q}$ be the subsets in $Q=\mathcal{D}/\Gamma$
 given by the images of $\mathfrak{H}_\mathcal{D}$ and $\mathfrak{S}_\mathcal{D}$, respectively.

 Set
 $\Gamma' = \{\gamma \in \Gamma \hspace{0.5mm} | \hspace{0.5mm} {\rm det} (\gamma) =1\}$.
 This is a normal subgroup of $\Gamma$.
 From Theorem \ref{ThmKneser}, 
 $$
 \Gamma'=\{ \gamma \in \Gamma \hspace{0.5mm} | \hspace{0.5mm} \gamma \text{ can be given by a product of reflections of even numbers }  \}.
 $$
 Set
 $Q_1 = \mathcal{D}/\Gamma'$.
 We naturally define the subsets
 $\mathfrak{H}_{Q_1}$ and $\mathfrak{S}_{Q_1}$ in $ Q_1.$ 
We note that 
$Q_1-\mathfrak{S}_{Q_1} \rightarrow Q-\mathfrak{S}_Q$
is a double covering 
branched along
$\mathfrak{H}_Q-\mathfrak{S}_Q.$ 
 Since the action of $\Gamma$ on $\mathcal{D}-(\mathfrak{H}_\mathcal{D} \cup \mathfrak{S}_\mathcal{D})$
 is free,
 any points whose stabilizer group is $\mathbb{Z}/2 \mathbb{Z}$
 are contained in $\mathfrak{H}_\mathcal{D}$.
Moreover,  $\mathfrak{H}_\mathcal{D}$ coincides with the set of fixed points of $\Gamma/\Gamma' \simeq \mathbb{Z}/2\mathbb{Z}$.
Hence, it follows that the action of $\Gamma'$ on $\mathcal{D}-\mathfrak{S}_\mathcal{D}$ is free.
 
 Here, let us recall the period mapping $\Phi:T \simeq Q$ of (\ref{PhiIso}), which gives an isomorphism.
 We set $\mathfrak{H}_T=\Phi^{-1} (\mathfrak{H}_Q)$ and $ \mathfrak{S}_T =\Phi^{-1}(\mathfrak{S}_Q).$
 Since $T$ is a Zariski open set of the weighted projective space $\hat{T}=\mathbb{P}(4,6,10,12,18)$
 and $\mathfrak{H}_T$ is an analytic subgroup
 of codimension $1$,
 there exists a weighted homogeneous polynomial 
 $\Delta_T(t) \in \mathbb{C}[t_4,t_6,t_{10},t_{12},t_{18}]$
 such that
 \begin{align}\label{DeltaT}
 \mathfrak{H}_T = \{[t]= (t_4:t_6:t_{10}:t_{12}:t_{18}) \in \mathbb{P}(4,6,10,12,18) \hspace{0.5mm} | \hspace{0.5mm} \Delta_T (t)=0 \}.
 \end{align}
 Let us consider the double covering $T_1$ of $T-\mathfrak{S}_T$ branched along $\mathfrak{H}_T-\mathfrak{S}_T$:
 \begin{align}\label{T1}
 T_1=\{([t],s) \in (T-\mathfrak{S}_T) \times \mathbb{C} \hspace{0.5mm} | \hspace{0.5mm} s^2 =\Delta_T(t) \}.
 \end{align}
 
 Since $T_1$ ($Q_1-\mathfrak{S}_{Q_1}$, resp.)
 is the double covering of
 $T-\mathfrak{S}_T$ ($Q-\mathfrak{S}_Q$, resp.)
 branched along $\mathfrak{H}_T-\mathfrak{S}_T$ ($\mathfrak{H}_Q-\mathfrak{S}_Q$, resp.)
 and
 the divisor $\mathfrak{H}_T$ is identified with $\mathfrak{H}_Q$ under the period mapping $\Phi$ of (\ref{PhiIso}),
 we can obtain the lift $\Phi_{Q_1}: T_1 \simeq Q_1-\mathfrak{S}_{Q_1}$ of $\Phi|_{T-\mathfrak{S}_T}$.
 Next, let us consider the universal covering $T_\mathcal{D}$ of $T_1$.
 Recalling that the action of $\Gamma'$ on $\mathcal{D}-\mathfrak{S}_\mathcal{D}$ is free,
 we can see that 
 $\Phi_{Q_1}$ is lifted to $\Phi_\mathcal{D}: T_\mathcal{D} \simeq \mathcal{D}-\mathfrak{S}_\mathcal{D}.$
 So, we have the following diagram.

\begin{align}\label{diagram}
\begin{CD}
T_\mathcal{D} @> \Phi_\mathcal{D} >>\mathcal{D}-\mathfrak{S}_\mathcal{D}\\
@V  \text{free} Vp_\mathcal{D}V @VV  \text{free} V\\
T_1 @> \Phi_{Q_1} >> Q_1 -\mathfrak{S}_{Q_1}\\
@V  \mathbb{Z}/2\mathbb{Z} Vp_1V @VV  \mathbb{Z}/2\mathbb{Z}V\\
T-\mathfrak{S}_T @>\Phi| _{T-\mathfrak{S}_T}>  > Q-\mathfrak{S}_Q  \\
@V  \text{inclusion}  VV @VV  \text{inclusion}V\\
T @>\Phi > > Q  
 \end{CD}
\end{align}

Let us consider
the pull-back 
$T_\mathcal{D}^* \rightarrow T_\mathcal{D}$
of the principal $\mathbb{C}^*$-bundle
$T^*  \rightarrow T$ 
by the composition of the mapping $T_\mathcal{D} \rightarrow T_1 \rightarrow T-\mathfrak{S}_T \hookrightarrow T.$
The  isomorphism $\Phi_\mathcal{D}: T_\mathcal{D} \simeq \mathcal{D}-\mathfrak{S}_\mathcal{D}$ 
 can be lifted to 
 \begin{align}\label{liftedPer}
 \Phi_\mathcal{D^*}: T_{\mathcal{D}^*} \simeq \mathcal{D}^*-\mathfrak{S}_{\mathcal{D}^*},
 \end{align}
 where $\mathfrak{S}_{\mathcal{D}^*}$ is the preimage of $\mathfrak{S}_\mathcal{D}$ under the canonical projection.

 By the above construction of the lifts of $\Phi$,
 we can see that
 the mapping $\Phi_{Q_1}$ ($\Phi_\mathcal{D}$, $\Phi_{\mathcal{D}^*}$, resp.) is equivalent
 under the action of
 $\Gamma/\Gamma' \simeq \mathbb{Z}/2\mathbb{Z}$
 ($\Gamma$, $\mathbb{C}^* \times \Gamma$, resp.).
 Especially, 
 letting $\gamma \in O^+(A)$,
 due to Lemma \ref{LemGammaProj},
 we note that one of $\gamma$ or $-\gamma$ is an element of $\Gamma$
  and the other is not so.

 The above structure of the branches defines orbifolds.
 Such orbifolds  will be useful in Section 5.

 \section{Sections of line bundles and polynomial ring in parameters}

The natural projection
 $$\varpi: \mathcal{D}^* \rightarrow \mathcal{D}$$ 
  gives a principal $\mathbb{C}^*$-bundle.
By the period mapping $\Phi$ of (\ref{PhiIso}),
$Q=\mathcal{D}/\Gamma$ is isomorphic to the Zariski open set $T$ of the weighted projective space $\hat{T}=\mathbb{P}(4,6,10,12,18)$.
Especially,
$Q$ gives a smooth variety.
Since $\varpi$ is equivalent under the action of $\Gamma$,
we have a principal $\mathbb{C}^*$-bundle
$$
\overline{\varpi}: \mathcal{D}^*/ \Gamma \rightarrow Q.
$$

In this paper,
let $\mathcal{O}_Q (1)$ be the line bundle associated with $\overline{\varpi}$.
By the definition of associated bundles,
a section  of $\mathcal{O}_Q (1)$ 
corresponds to
a holomorphic function $\eta \mapsto s(\eta)$  ($\eta\in \mathcal{D}^*/\Gamma$) satisfying
\begin{align}\label{associatedbundle}
 s (\lambda \eta) = \lambda^{-1} s (\eta) \quad \quad (\lambda \in \mathbb{C}^*).
\end{align}
For $k\in \mathbb{Z}$,
the line bundle $\mathcal{O}_Q (1)^{\otimes k}$ is denoted by $\mathcal{O}_Q (k).$

\begin{prop}\label{PropDiagram}
There is a commutative diagram
\begin{align*}
\begin{CD}
t=(t_4,t_6,t_{10},t_{12},t_{18}) @> \text{period mapping} >> \eta \\
@V  \mathbb{C}^*\text{-action} V V @VV  \mathbb{C}^*\text{-action} V\\
\lambda^{-1}\cdot t=(\lambda^{-4}t_4, \lambda^{-6}t_6, \lambda^{-10}t_{10}, \lambda^{-12}t_{12}, \lambda^{-18}t_{18})  @> \text{period mapping} >> \lambda \eta \\
 \end{CD}
\end{align*}
Especially,
the correspondence $\mathcal{D}^*/\Gamma \ni \eta \mapsto t_j(\eta)\in\mathbb{C}$, 
via the inverse  of the period mapping $\Phi$,
defines a global section  of  the line bundle  $\mathcal{O}_Q (k)$.
\end{prop}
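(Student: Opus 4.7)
The plan is to reduce the commutative diagram to a bookkeeping computation based on the two ingredients already recorded in Section~1.1, namely the biholomorphism $\lambda\colon S(t)\to S(\lambda\cdot t)$ of (\ref{lambdamap}) and the transformation law $\lambda^{*}\omega_{\lambda\cdot t}=\lambda^{-1}\omega_{t}$ of (\ref{omegalambda}); once this is done the section statement will follow automatically from the defining property (\ref{associatedbundle}) of sections of $\mathcal{O}_{Q}(1)$.

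First I would verify that the $S$-marking is transported by $\lambda$ in a compatible way. The biholomorphism $\lambda$ preserves the elliptic fibration, the zero-section, and the singular fibres of types $II^{*}$ and $IV^{*}$, so it sends the cycles $a_{j},b_{k},O,F$ of (\ref{generators}) on $S(t)$ to the corresponding cycles on $S(\lambda\cdot t)$. Consequently, if $\psi_{t}$ denotes the $S$-marking on $S(t)$ and $\gamma_{1},\dots,\gamma_{6}$ are the basis vectors of $A$ from (\ref{gammagenerator}), then $\lambda_{*}\bigl(\psi_{t}^{-1}(\gamma_{i})\bigr)=\psi_{\lambda\cdot t}^{-1}(\gamma_{i})$ for each $i$. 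Applying change of variables together with (\ref{omegalambda}), I then compute
\begin{align*}
\int_{\psi_{\lambda\cdot t}^{-1}(\gamma_{i})}\omega_{\lambda\cdot t}
= \int_{\lambda_{*}(\psi_{t}^{-1}(\gamma_{i}))}\omega_{\lambda\cdot t}
= \int_{\psi_{t}^{-1}(\gamma_{i})}\lambda^{*}\omega_{\lambda\cdot t}
= \lambda^{-1}\int_{\psi_{t}^{-1}(\gamma_{i})}\omega_{t}.
\end{align*}
Thus the lifted period map $\widetilde{\Phi}\colon T^{*}\to\mathcal{D}^{*}/\Gamma$ obtained from (\ref{PerPhi}) satisfies $\widetilde{\Phi}(\lambda\cdot t)=\lambda^{-1}\widetilde{\Phi}(t)$. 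Substituting $\mu=\lambda^{-1}$ in this identity produces exactly the commutativity asserted in the displayed diagram.

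For the section statement, I pull back the coordinate function $t_{j}$ (of weight $j$ under $t\mapsto\lambda\cdot t$) along the isomorphism $\widetilde{\Phi}^{-1}$. Setting $s_{j}(\eta):=t_{j}\bigl(\widetilde{\Phi}^{-1}(\eta)\bigr)$ and using $\widetilde{\Phi}^{-1}(\lambda\eta)=\lambda^{-1}\cdot\widetilde{\Phi}^{-1}(\eta)$ gives
\[
s_{j}(\lambda\eta)=t_{j}(\lambda^{-1}\cdot\widetilde{\Phi}^{-1}(\eta))=\lambda^{-j}s_{j}(\eta),
\]
which is precisely the transformation rule (\ref{associatedbundle}) with $k=j$. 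Hence $s_{j}$ defines a global holomorphic section of $\mathcal{O}_{Q}(j)$; globality over all of $Q$ uses Corollary~\ref{CorPer}, which ensures $\Phi\colon T\simeq Q$ is a biholomorphism, not merely a birational map.

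The mildly delicate point — and really the only one — is the compatibility of $S$-markings under $\lambda$, because the marking is only defined after choosing a reference point and an arc of analytic continuation. This is handled by noting that $\lambda$ is realized as a biholomorphic family automorphism of $\{S(t)\}_{t\in T^{*}}$, so conjugation by $\lambda_{*}$ intertwines the local $S$-markings along any arc, and $\Gamma$-equivariance takes care of the different choices on the target side. Everything else is a direct substitution into (\ref{omegalambda}) and (\ref{associatedbundle}).
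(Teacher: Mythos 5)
Your proposal is correct and follows the same route as the paper: the commutativity of the diagram is extracted from the transformation law $\lambda^{*}\omega_{\lambda\cdot t}=\lambda^{-1}\omega_{t}$ of (\ref{omegalambda}) applied to the integral expression (\ref{PerPhi}) for the period mapping, and the section statement then follows from the isomorphism of Corollary \ref{CorPer} together with the defining property (\ref{associatedbundle}) of $\mathcal{O}_{Q}(k)$. Your explicit chain of equalities for the periods and your remark on the $\Gamma$-ambiguity of the marking merely spell out details the paper leaves implicit.
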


\begin{proof}
In Corollary \ref{CorPer}, 
the  period mapping $\Phi$ of (\ref{PerPhi})
is given by integrals of the holomorphic $2$-form $\omega_{[t]}$ of $S([t])$ ($[t] \in T$).
By virtue of the property (\ref{omegalambda}) of $\omega_{[t]}$,
we  obtain the above commutative diagram.
Together with Theorem \ref{ThmDolg} and Theorem \ref{ThmMPAP},
we can consider the inverse
 correspondence $\mathcal{D}^*/\Gamma \ni \eta \mapsto t_j(\eta)\in\mathbb{C}$
 of the period mapping.
 Considering the diagram,
 $\eta \mapsto t_k (\eta)$ defines
 a global section  of  the line bundle  $\mathcal{O}_Q (k)$
 in the sense of (\ref{associatedbundle}).
\end{proof}

Moreover, we have the following theorem.

\begin{thm}\label{Thmid}
The total coordinate ring on the smooth variety $Q$  is isomorphic to the polynomial ring of
$t_4,t_6,t_{10},t_{12}$ and $t_{18}$:
$$
\displaystyle \bigoplus_{\mathcal{L}\in {\rm Pic}(Q)} H^0 (Q, \mathcal{L}) \simeq \mathbb{C}[t_4,t_6,t_{10},t_{12},t_{18}].
$$
Here, $t_k$ gives a global section of the line bundle $\mathcal{O}_Q(k)$.
\end{thm}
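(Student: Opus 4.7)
The plan is to transport everything through the isomorphism $\Phi: T \simeq Q$ of Corollary \ref{CorPer} and reduce the statement to a Hartogs extension combined with the standard identification of the Cox ring of the weighted projective space $\hat{T} = \mathbb{P}(4,6,10,12,18)$. By Proposition \ref{PropDiagram}, the principal $\mathbb{C}^*$-bundle $\mathcal{D}^*/\Gamma \to Q$ pulls back under $\Phi$ to the restriction of the tautological $\mathbb{C}^*$-bundle $\mathbb{C}^5 \setminus \{0\} \to \hat{T}$ over $T$, so each parameter $t_j$ ($j \in \{4,6,10,12,18\}$) is a global section of $\mathcal{O}_Q(j)$ and the line bundle $\mathcal{O}_Q(k)$ is identified with the restriction of $\mathcal{O}_{\hat{T}}(k)$ to $T$.

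The key geometric input is (\ref{TDef}): the complement $\hat{T}-T = \{t_{10}=t_{12}=t_{18}=0\}$ is an analytic subset of codimension $3$ in $\hat{T}$, so the complement of $T^*$ in $\mathbb{C}^5$ is also of codimension $3$. A section $s \in H^0(Q, \mathcal{O}_Q(k))$ corresponds, via (\ref{associatedbundle}) and Proposition \ref{PropDiagram}, to a holomorphic function $f: T^* \to \mathbb{C}$ satisfying $f(\lambda \cdot t) = \lambda^k f(t)$ for all $\lambda \in \mathbb{C}^*$. Hartogs' extension theorem extends $f$ uniquely to a holomorphic function on all of $\mathbb{C}^5$, and the weighted homogeneity $f(\lambda \cdot t)=\lambda^k f(t)$ forces its Taylor expansion at the origin to be a finite linear combination of monomials $t_4^{a_4} t_6^{a_6} t_{10}^{a_{10}} t_{12}^{a_{12}} t_{18}^{a_{18}}$ with $4a_4 + 6a_6 + 10a_{10} + 12a_{12} + 18a_{18} = k$. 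Hence $H^0(Q, \mathcal{O}_Q(k))$ is exactly the space of weighted-homogeneous polynomials of weighted degree $k$ in the $t_j$; negative $k$ contribute nothing.

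For the Picard-group side, the same codimension-$3$ complement combined with the normality of $\hat{T}$ ensures that the restriction map from the divisor class group of $\hat{T}$ to that of $T\simeq Q$ is an isomorphism, because Weil divisors extend uniquely across closed subsets of codimension $\geq 2$ on a normal variety. Thus every element of $\mathrm{Pic}(Q)$ is of the form $\mathcal{O}_Q(k)$ for a unique $k \in \mathbb{Z}$. Assembling the pieces gives
$$
\bigoplus_{\mathcal{L} \in \mathrm{Pic}(Q)} H^0(Q, \mathcal{L}) \;\simeq\; \bigoplus_{k \geq 0} H^0(Q, \mathcal{O}_Q(k)) \;\simeq\; \mathbb{C}[t_4, t_6, t_{10}, t_{12}, t_{18}],
$$
with the indicated weighted grading.

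The main technical point is the identification $\mathrm{Pic}(Q) \simeq \mathbb{Z}\cdot \mathcal{O}_Q(1)$: since $\hat{T}$ carries cyclic quotient singularities along certain loci (it is only smooth as an orbifold), one must interpret $\mathrm{Pic}(Q)$ as the group of orbifold line bundles, or equivalently the divisor class group on the normal underlying variety, and invoke codimension-$\geq 2$ extension for divisor classes. The homogeneous-sections side, by contrast, is handled directly by Hartogs with no such subtlety, and the rest is bookkeeping against the weights $4,6,10,12,18$.
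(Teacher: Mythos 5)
Your proposal is correct and follows essentially the same route as the paper: both transport the problem through $\Phi: T\simeq Q$ to the weighted projective space $\hat{T}=\mathbb{P}(4,6,10,12,18)$, use the codimension-$3$ complement $\hat{T}-T$ together with Hartogs-type extension to identify ${\rm Pic}(Q)$ with ${\rm Pic}(\hat{T})\simeq\mathbb{Z}$ and sections over $T$ with sections over $\hat{T}$, and then read off the total coordinate ring as the weighted polynomial ring. Your version merely makes the section-extension step more explicit (extending weighted-homogeneous functions from $T^*$ to $\mathbb{C}^5$) and is a bit more careful about the ${\rm Pic}$-versus-class-group subtlety on $\hat{T}$, which the paper passes over.
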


\begin{proof}
Since $T$ of (\ref{TDef}) is an analytic subset of $\hat{T}=\mathbb{P}(4,6,10,12,18)$ of codimension at least  $2$,
by Hartogs's phenomenon,
we have an isomorphism
\begin{align}\label{PicT}
\iota^*_T: {\rm Pic}(\hat{T}) \simeq {\rm Pic} (T)
\end{align}
from the inclusion 
$\iota_T: T\hookrightarrow \hat{T}$.
By the isomorphism $\Phi: T \rightarrow Q$ of (\ref{PhiIso}),
we obtain
\begin{align}\label{PicPhi}
\Phi^* : {\rm Pic}(Q) \simeq {\rm Pic} (T).
\end{align}
Since $\hat{T}$ is a weighted projective space,
${\rm Pic}(\hat{T}) \simeq \mathbb{Z}$ holds.
Moreover, it holds
\begin{align}\label{totalringT}
\bigoplus_{\mathcal{L} \in {\rm Pic}(\hat{T})} H^0 (\hat{T},\mathcal{O}_{\hat{T}} (\mathcal{L}))
\simeq
\bigoplus_{k\in \mathbb{Z}} \mathbb{C}^{(k)} [t],
\end{align}
where
$\mathbb{C}^{(k)} [t] = \mathbb{C}^{(k)} [t_4,t_6,t_{10},t_{12},t_{18}]$
is the vector space of polynomials of weight $k$.
Due to (\ref{PicT}),
${\rm Pic} (Q)$ is generated by $\mathcal{O}_Q (1)$.
Together with  (\ref{PicPhi}) and (\ref{totalringT}),
we have
\begin{align}
\bigoplus_{k\in \mathbb{Z}}\mathbb{C}^{(k)}[t]
 \simeq  \bigoplus_{\mathcal{L}\in {\rm Pic} (T)}H^0 (T,\mathcal{O}_T (\mathcal{L}))
\simeq \bigoplus_{k\in \mathbb{Z}} H^0 (Q,\mathcal{O}_Q (k)).
\end{align}
Especially, $\mathbb{C}^{(k)}[t]$ gives the space
$H^0 (Q,\mathcal{O}_Q(k))$ 
of sections
of the line bundle $\mathcal{O}_Q (k)$
via the period mapping. 
\end{proof}

\begin{rem}
We have the Satake-Baily-Borel compactification
$\hat{Q}$ of $Q=\mathcal{D}/\Gamma$.
We note that 
$\hat{Q}-Q$ is an analytic subset of $\hat{Q}$
of codimension at least $2$ (see \cite{BB} Proposition 3.15).
So, by Hartogs's phenomenon again, 
the inclusion 
$\iota_Q : Q \hookrightarrow \hat{Q}$
 induces an isomorphism
$
\iota^*_Q: {\rm Pic}(\hat{Q}) \simeq {\rm Pic} (Q)
$
of Picard groups
and
the total coordinate ring on $\hat{Q}$ is given by $\mathbb{C}[t_4,t_6,t_{10},t_{12},t_{18}]$ also. 
\end{rem}

From the definition (\ref{associatedbundle}) of the associated line bundles,
a global section $s$ of $\mathcal{O}_Q (k)$ induces a holomorphic mapping $Z\mapsto \tilde{s} (Z)$  $(Z\in \mathcal{D}^*)$ satisfying the following functional equations
\begin{align*}
\begin{cases}
& \tilde{s} (\lambda Z) = \lambda^{-k} \tilde{s} (Z) \quad (\lambda \in \mathbb{C}^*), \\
&  \tilde{s} (\gamma Z) = \tilde{s} (Z) \quad (\gamma\in \Gamma) .
\end{cases}
\end{align*}
Therefore,
by recalling Definition \ref{DfModularForm},
a global section $s$ of $\mathcal{O}_Q (k)$ defines a 
modular form of weight $k$ and character ${\rm id}$ 
for the group $\Gamma=\tilde{O}^+(A)$.
Hence, we have the following corollary.

\begin{cor}\label{Corid}
Let $\Gamma=\tilde{O}^+(A).$
The inverse   of the period mapping $\Phi$ of (\ref{PerPhi})
induces an isomorphism between $\mathbb{C}[t_4,t_6,t_{10},t_{12},t_{18}]$ and the ring $\mathcal{A}(\Gamma,{\rm id}) $ of modular forms of character ${\rm id}$. 
\end{cor}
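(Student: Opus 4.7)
The plan is to derive the corollary essentially as a direct consequence of Theorem \ref{Thmid} combined with the correspondence between sections of $\mathcal{O}_Q(k)$ and modular forms of weight $k$ and trivial character. The key observation is that the defining functional equations (i) and (ii) of Definition \ref{DfModularForm} for character $\chi=\mathrm{id}$ are precisely the two pieces of data encoded by the associated $\mathbb{C}^*$-bundle $\overline{\varpi}:\mathcal{D}^*/\Gamma\to Q$; condition (i) is exactly the $\mathbb{C}^*$-equivariance (\ref{associatedbundle}) and condition (ii) reflects that the function $f$ on $\mathcal{D}^*$ descends to $\mathcal{D}^*/\Gamma$.

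More concretely, I would proceed as follows. First, I would promote the observation already made in the paragraph preceding the corollary into a vector space isomorphism
\begin{equation*}
H^0(Q,\mathcal{O}_Q(k)) \;\simeq\; \mathcal{A}_k(\Gamma,\mathrm{id})
\end{equation*}
for every $k\in\mathbb{Z}$. One direction sends a global section $s$ of $\mathcal{O}_Q(k)$ to the function $\tilde s$ on $\mathcal{D}^*$ obtained by pulling back along $\mathcal{D}^*\to\mathcal{D}^*/\Gamma$; the two functional equations displayed just before the corollary then show $\tilde s\in\mathcal{A}_k(\Gamma,\mathrm{id})$. Conversely, a modular form $f$ of weight $k$ and trivial character descends through $\mathcal{D}^*\to\mathcal{D}^*/\Gamma$ (by (ii)) to a function satisfying (\ref{associatedbundle}) (by (i)), hence defines a section of $\mathcal{O}_Q(k)$. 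These two constructions are mutually inverse by construction.

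Next, I would invoke Theorem \ref{Thmid} to identify the graded sum of these spaces with the polynomial ring. Since $\mathrm{Pic}(Q)$ is generated by $\mathcal{O}_Q(1)$, the total coordinate ring in Theorem \ref{Thmid} is exactly $\bigoplus_{k\in\mathbb{Z}}H^0(Q,\mathcal{O}_Q(k))$, and Proposition \ref{PropDiagram} shows that $t_k$ is the element that corresponds, under the period mapping, to a section of $\mathcal{O}_Q(k)$. Combining the two isomorphisms yields
\begin{equation*}
\mathbb{C}[t_4,t_6,t_{10},t_{12},t_{18}] \;\simeq\; \bigoplus_{k\geq 0} \mathcal{A}_k(\Gamma,\mathrm{id}) \;=\; \mathcal{A}(\Gamma,\mathrm{id}).
\end{equation*}
It remains to verify that this bijection is compatible with multiplication, but this is immediate from both sides: on the geometric side, tensor product of sections corresponds to multiplication of the underlying functions on $\mathcal{D}^*$, and on the algebraic side, the ring structure on the total coordinate ring in Theorem \ref{Thmid} is the polynomial multiplication.

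I do not anticipate a serious obstacle here; the content is essentially a bookkeeping argument that packages Theorem \ref{Thmid}, Proposition \ref{PropDiagram} and Definition \ref{DfModularForm} into a single statement. The only subtle point worth being explicit about is the role of negative weights: a priori Theorem \ref{Thmid} sums over all of $\mathrm{Pic}(Q)\simeq\mathbb{Z}$, but since $\mathcal{O}_Q(k)$ for $k<0$ has no non-zero global sections on the compactification (equivalently, there are no non-zero polynomials of negative weight), only the non-negative part contributes, matching the grading of $\mathcal{A}(\Gamma,\mathrm{id})$. With this small remark the corollary follows.
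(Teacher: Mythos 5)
Your argument is correct and follows essentially the same route as the paper: the paper likewise derives the corollary by observing that global sections of $\mathcal{O}_Q(k)$ pull back to functions on $\mathcal{D}^*$ satisfying exactly conditions (i) and (ii) of Definition \ref{DfModularForm} with $\chi={\rm id}$, and then applies Theorem \ref{Thmid}. Your additional remarks on the inverse direction, multiplicativity, and the vanishing in negative weights are correct bookkeeping that the paper leaves implicit.
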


Since the argument in this section is based on the structure of the line bundles over the smooth variety $Q=\mathcal{D}/\Gamma$,
we does not consider the branch loci coming from the action of the group $\Gamma$ in this section.
However, as we considered in Section 3,
the action of $\Gamma$ on $\mathcal{D}$ is not free.
In fact,
the branch loci coming from this action
affect the structure of the ring of modular forms.
In the next section, 
we will consider the orbifolds coming from the action of $\Gamma$. 
By considering  orbibundles over the orbifolds,
we will determine the precise structure of the ring of  modular forms for $\Gamma$ with non-trivial characters.

\section{Structure of the ring of modular forms for $\Gamma$ }

In this section,
we will consider the structure of the canonical orbibundles on the orbifolds coming from the results in Section 3.3.
For the definitions and properties of orbibundles, for example, see \cite{ALR}. 
Also,  we will use the techniques of  orbifolds referring to the work  \cite{HashimotoUeda}.

We can see that the total space $\mathcal{D}^*$ of the  $\mathbb{C}^*$-bundle on $\mathcal{D}$ is a Stein manifold,
because $\mathcal{D}$ is a Stein manifold.
Also, we can see that $H^2 (\mathcal{D}^*,\mathbb{Z})=0.$
Hence, 
 we have 
$$
{\rm Pic} (\mathcal{D}^*) = 0.
$$
So, for the orbifold $\mathbb{O}=[\mathcal{D}^*/(\mathbb{C}^* \times \Gamma)]$, we have
\begin{align}\label{PicChar}
{\rm Pic} (\mathbb{O}) = {\rm Char} (\mathbb{C}^* \times \Gamma).
\end{align}
According to Theorem \ref{ThmKneser} (2) and (\ref{PicChar}),
there exists a line orbibundle corresponding to  a character of $ \mathbb{C}^* \times \Gamma$ given by 
$(\lambda,\gamma)\mapsto \lambda^{-k} {\rm det}(\gamma)^l$  $(l\in \mathbb{Z}/2\mathbb{Z})$.
In the following argument,
this bundle is denoted by
$\mathcal{O}_{\mathbb{O}} (k) \otimes {\rm det}^l$.

\begin{lem}\label{LemCano}
Let $\Omega_\mathbb{O} $ be the canonical orbibundle over the orbifold $\mathbb{O}$. 
Then,
$\Omega_\mathbb{O}\simeq \mathcal{O}_\mathbb{O}(4) \otimes {\rm det}.$
 \end{lem}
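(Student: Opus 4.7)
The plan is to identify $\Omega_\mathbb{O}$ by producing an explicit nowhere-vanishing lift to $\mathcal{D}^*$ and reading off its equivariance character under $\mathbb{C}^* \times \Gamma$. By the isomorphism $\mathrm{Pic}(\mathbb{O}) = \mathrm{Char}(\mathbb{C}^* \times \Gamma)$ of (\ref{PicChar}), the character determines $\Omega_\mathbb{O}$ among the orbibundles $\mathcal{O}_\mathbb{O}(k) \otimes \det^l$, so the lemma reduces to a purely equivariant computation on $\mathcal{D}^*$.

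First I would construct the lift via the Poincaré residue of the defining quadric. Writing $F(Z) = (Z,Z)$, the meromorphic top form $dZ_1 \wedge \cdots \wedge dZ_6 / F$ on $A \otimes \mathbb{C}$ has residue $\omega^*$, which is a nowhere-vanishing holomorphic $5$-form on $\mathcal{D}^* = \{F=0\} \cap \{(\xi,\bar\xi)>0\}$. Since $dZ_1 \wedge \cdots \wedge dZ_6$ scales with weight $6$ under $Z \mapsto \lambda Z$ and transforms by $\det(\gamma)$ under $\gamma \in O(A)$, while $F$ scales with weight $2$ and is $O(A)$-invariant, one obtains
\begin{align*}
\phi_\lambda^* \omega^* = \lambda^4 \omega^*, \qquad \phi_\gamma^* \omega^* = \det(\gamma)\, \omega^*.
\end{align*}

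Next I would pass from the $5$-form $\omega^*$ on $\mathcal{D}^*$ to a horizontal $4$-form by contracting with the Euler field $E = \sum_i Z_i \partial_{Z_i}$. Since $E$ is the generator of the $\mathbb{C}^*$-action and satisfies $\gamma_* E = E$ for every linear $\gamma$, contraction commutes with $\mathbb{C}^* \times \Gamma$. Therefore $i_E \omega^*$ is horizontal for the fibration $\pi: \mathcal{D}^* \to \mathcal{D}$ (since $i_E i_E = 0$), hence represents a section of the pullback of $\Omega_\mathbb{O}$; it is nowhere vanishing because $\omega^*$ is a non-degenerate top form and $E$ has no zeros on $\mathcal{D}^*$; and it inherits the transformation laws of $\omega^*$.

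Finally, a section of $\Omega_\mathbb{O}$ lifts to a $(\mathbb{C}^* \times \Gamma)$-invariant horizontal top form on $\mathcal{D}^*$, and any such form can be written uniquely as $f \cdot i_E \omega^*$ for a holomorphic $f$. The invariance conditions on the product become
\begin{align*}
f(\lambda Z) = \lambda^{-4} f(Z), \qquad f(\gamma Z) = \det(\gamma)\, f(Z),
\end{align*}
where the second equation uses $\det(\gamma)^2 = 1$. These are exactly the defining relations for a section of $\mathcal{O}_\mathbb{O}(4) \otimes \det$, so $f \mapsto f \cdot i_E \omega^*$ gives the asserted isomorphism. The main point to handle carefully is the sign/character bookkeeping: the $\mathbb{C}^*$-weight $+4$ carried by the form $\omega^*$ becomes the $-4$ in the functional equation for $f$, which is precisely the paper's weight-$4$ condition; and Theorem \ref{ThmKneser}(2), $\mathrm{Char}(\Gamma) = \{\mathrm{id}, \det\}$, ensures that $\det$ is the only non-trivial character that can appear on the $\Gamma$-side.
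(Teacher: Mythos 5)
Your argument is correct, and it arrives at both the weight $4$ and the character $\det$ by a single explicit construction: the Poincar\'e residue of $dZ_1\wedge\cdots\wedge dZ_6/F$ along the affine quadric cone, contracted with the Euler field, is a nowhere-vanishing horizontal $4$-form whose $(\mathbb{C}^*\times\Gamma)$-equivariance ($\lambda^4$ and $\det\gamma$) you read off directly; combined with ${\rm Pic}(\mathbb{O})={\rm Char}(\mathbb{C}^*\times\Gamma)$ from (\ref{PicChar}) this pins down $\Omega_{\mathbb{O}}$. The paper splits the same computation into two separate steps: it first gets $\Omega_{\mathcal{D}}\simeq\mathcal{O}_{\mathcal{D}}(4)$ by quoting the adjunction formula for the quadric in $\mathbb{P}^5(\mathbb{C})$, and then determines the $\Gamma$-character by a local coordinate computation at a reflection hyperplane, where $\sigma_\delta$ sends $dw_1\wedge dw_2\wedge dw_3\wedge dw_4$ to its negative. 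Since every generator of $\Gamma$ is a reflection (Theorem \ref{ThmKneser}), the local sign computation and your global identity $\phi_\gamma^*\omega^*=\det(\gamma)\,\omega^*$ carry the same information, and the residue map is of course the standard proof of adjunction, so the two arguments are mathematically the same; what your version buys is a uniform treatment of the two characters, an explicit equivariant trivializing section of $\varpi^*\Omega_{\mathcal{D}}$ (which makes the translation into the functional equations $f(\lambda Z)=\lambda^{-4}f(Z)$, $f(\gamma Z)=\det(\gamma)f(Z)$ of Definition \ref{DfModularForm} immediate), and independence from any sign-convention ambiguity in the statement $\Omega_{\mathbb{P}^5(\mathbb{C})}\simeq\mathcal{O}_{\mathbb{P}^5(\mathbb{C})}(6)$. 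Your bookkeeping is right at the one delicate point: the $+4$ scaling of $i_E\omega^*$ becomes the $-4$ in the functional equation for $f$, matching the paper's weight convention.
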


\begin{proof}
Note that
$\mathcal{D}$ is a open set of a quadratic hypersurface in $\mathbb{P}^5 (\mathbb{C})$.
Recall  that
$$\Omega_{\mathbb{P}^5(\mathbb{C})} \simeq \mathcal{O}_{\mathbb{P}^5(\mathbb{C})} (6).$$
Hence, by the adjunction formula,
we have 
\begin{align}\label{OmegaD}
\Omega_\mathcal{D} \simeq \mathcal{O}_\mathcal{D} (6-2) = \mathcal{O}_\mathcal{D} (4).
\end{align}

Next, let us consider a section of the canonical orbibundle $\Omega_\mathbb{O}$
and the action of $\Gamma$ on a holomorphic $4$-form on $\mathcal{D}$.
Let $\sigma_\delta $ 
 be a reflection for $\delta\in \Delta(A)$.
  By taking a local system of coordinates $(w_1,w_2,w_3,w_4)$ of a small open set in $\mathcal{D}$
 such that the reflection hyperplane of $\sigma_\delta$ is $\{w_1=0\}$,
 the $4$-form $dw_1\wedge dw_2\wedge dw_3\wedge dw_4$
 is changed to $-dw_1\wedge dw_2\wedge dw_3\wedge dw_4$ 
 under the action of $\sigma_\delta$.
 Together with (\ref{OmegaD}),
 it is implied that the canonical orbibundle $\Omega_\mathbb{O}$ is given by $\mathcal{O}_\mathbb{O} (4) \otimes {\rm det}.$
\end{proof}

\begin{lem}\label{LemEquiv}
The lifted period mapping $\Phi_{\mathcal{D}^*}$ in (\ref{liftedPer}) gives an isomorphism of orbifolds between $[T_{D^*}/(\mathbb{C}^* \times \Gamma)]$
and $[(\mathcal{D}^*- \mathfrak{S}_{\mathcal{D}^*})/(\mathbb{C}^* \times \Gamma) ]$
such that
$[\Phi_{\mathcal{D}^*}]^* \mathcal{O}_{[(\mathcal{D}^*- \mathfrak{S}_{\mathcal{D}^*})/(\mathbb{C}^* \times \Gamma) ]} (1) \simeq \mathcal{O}_{[T_{D^*}/(\mathbb{C}^* \times \Gamma)]} (1).$
\end{lem}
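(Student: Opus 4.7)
The plan is to assemble Lemma~\ref{LemEquiv} from pieces already established in Section~2 and Section~3.3. First I would verify that the holomorphic isomorphism $\Phi_{\mathcal{D}^*}:T_{\mathcal{D}^*} \simeq \mathcal{D}^* - \mathfrak{S}_{\mathcal{D}^*}$ of (\ref{liftedPer}) is equivariant with respect to the product action of $\mathbb{C}^* \times \Gamma$. The $\mathbb{C}^*$-equivariance is precisely the content of Proposition~\ref{PropDiagram}: the action $t \mapsto \lambda^{-1}\cdot t$ on $T^*$ (pulled back to $T_{\mathcal{D}^*}$) corresponds under $\Phi_{\mathcal{D}^*}$ to the fibrewise $\mathbb{C}^*$-action on the principal bundle $\mathcal{D}^* \to \mathcal{D}$, by (\ref{omegalambda}). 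The $\Gamma$-equivariance is built into the construction of the lift: $T_1 \to T-\mathfrak{S}_T$ is the double cover matching $Q_1 - \mathfrak{S}_{Q_1} \to Q - \mathfrak{S}_Q$, and then $T_\mathcal{D} \to T_1$ is the universal cover matching $\mathcal{D}-\mathfrak{S}_\mathcal{D} \to Q_1 - \mathfrak{S}_{Q_1}$, so the deck transformation group on $T_\mathcal{D}$ is canonically $\Gamma$, acting compatibly with the action of $\Gamma$ on $\mathcal{D}-\mathfrak{S}_\mathcal{D}$.

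Next I would check that the orbifold structures agree, i.e. that the stabilizers correspond under $\Phi_{\mathcal{D}^*}$. On $\mathcal{D}^*-\mathfrak{S}_{\mathcal{D}^*}$ the $\mathbb{C}^*$-action is free (it is the principal bundle action), and the $\Gamma$-action has stabilizers that are either trivial (outside $\mathfrak{H}_\mathcal{D}$) or $\{ {\rm id},\sigma_\delta\}$ on $\mathfrak{H}_\mathcal{D} - \mathfrak{S}_\mathcal{D}$, by the discussion in Section~3.3. On $T_{\mathcal{D}^*}$ the pulled-back $\mathbb{C}^*$-action is free because it is a pullback of the principal bundle $T^* \to T$, and the $\Gamma$-action has the same stabilizer pattern: $\Gamma'$ acts freely and $\Gamma/\Gamma'$ acts as the deck involution of the double cover $T_1 \to T-\mathfrak{S}_T$, whose ramification divisor is exactly $\mathfrak{H}_T = \Phi^{-1}(\mathfrak{H}_Q)$. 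So $\Phi_{\mathcal{D}^*}$ descends to an isomorphism of the quotient orbifolds $[T_{\mathcal{D}^*}/(\mathbb{C}^* \times \Gamma)] \simeq [(\mathcal{D}^* - \mathfrak{S}_{\mathcal{D}^*})/(\mathbb{C}^* \times \Gamma)]$.

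Finally, for the line bundle statement I would use the identification (\ref{PicChar}): on each orbifold the Picard group is the character group of $\mathbb{C}^* \times \Gamma$, since both $\mathcal{D}^* - \mathfrak{S}_{\mathcal{D}^*}$ and $T_{\mathcal{D}^*}$ are Stein (hence have vanishing $H^2(\cdot,\mathbb{Z})$ and no non-trivial line bundles at least after the same Picard-triviality arguments used for $\mathcal{D}^*$). Under the correspondence between equivariant line bundles and characters, $\mathcal{O}_{[(\mathcal{D}^*-\mathfrak{S}_{\mathcal{D}^*})/(\mathbb{C}^* \times \Gamma)]}(1)$ corresponds to the character $(\lambda,\gamma)\mapsto \lambda^{-1}$ (in view of (\ref{associatedbundle})), and its pullback along the $\mathbb{C}^* \times \Gamma$-equivariant isomorphism $\Phi_{\mathcal{D}^*}$ must correspond to the same character on $T_{\mathcal{D}^*}$; but that is by construction $\mathcal{O}_{[T_{\mathcal{D}^*}/(\mathbb{C}^* \times \Gamma)]}(1)$, since the $\mathbb{C}^*$-action on $T_{\mathcal{D}^*}$ is the pullback of the one on $T^*$ used to define the tautological bundle in Section~4.

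The main obstacle I expect is a bookkeeping one rather than a conceptual one: one needs to be precise about what orbifold structure is being put on $[\mathcal{D}^*/(\mathbb{C}^* \times \Gamma)]$ when restricted to the complement of $\mathfrak{S}_{\mathcal{D}^*}$, and verify that the universal-cover construction of $T_\mathcal{D}$ really yields an orbifold isomorphism rather than just a set-theoretic bijection on coarse moduli. Once the stabilizer calculation above is done carefully using Lemma~\ref{LemGammaProj} (to know $-{\rm id}\notin\Gamma$, so no spurious $\mathbb{Z}/2\mathbb{Z}$ comes from the centre) and the freeness of the $\Gamma'$-action established in Section~3.3, the rest is formal.
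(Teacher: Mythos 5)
Your proposal is correct and follows essentially the same route as the paper, which likewise assembles the lemma from the $\mathbb{C}^*$-equivariance in Proposition \ref{PropDiagram}, the covering construction and freeness statements of Section 3.3 (resting on Theorem \ref{ThmKneser} (1)), and the identification of orbifold line bundles with characters of $\mathbb{C}^*\times\Gamma$; your version simply spells out the stabilizer bookkeeping that the paper leaves implicit. The only slip is the claim that $\mathcal{D}^*-\mathfrak{S}_{\mathcal{D}^*}$ and $T_{\mathcal{D}^*}$ are Stein --- removing a codimension-two analytic subset from a Stein manifold need not preserve Steinness --- but the conclusion ${\rm Pic}=0$ still holds by the Hartogs-type extension across $\mathfrak{S}$ that the paper invokes, which is exactly the fallback you mention.
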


\begin{proof}
From Theorem \ref{ThmKneser} (1) and the argument of Section 3.3,
the lifted period mapping $\Phi_{\mathcal{D}}: T_\mathcal{D} \rightarrow \mathcal{D} - \mathfrak{S}_\mathcal{D}$ 
gives the monodromy covering of
$\Phi | _{T -\mathfrak{S}_T} : T-\mathfrak{S}_T \rightarrow Q-\mathfrak{S}_Q$.
Therefore,
together with the proof of Proposition \ref{PropDiagram},
we have the assertion for the mapping $\Phi_{\mathcal{D}^*}$.
\end{proof}

As far as we consider holomorphic sections of line bundles,
by virtue of  Hartogs's phenomenon,
we can ignore analytic subsets of codimension at least $2$,
such as $\mathfrak{S}_Q$ or $\mathfrak{S}_T$  in (\ref{diagram}).
In this section,
we shall often omit such subsets without notice.

\begin{prop}\label{Propd}
The degree of the polynomial $\Delta_T (t)$ in (\ref{DeltaT}) is $108$.
\end{prop}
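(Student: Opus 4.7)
The plan is to apply Riemann--Hurwitz to the double cover $p_1 : T_1 \to T-\mathfrak{S}_T$ of (\ref{diagram}) and read off $d=\deg\Delta_T$ from the comparison of canonical classes on $T$ and $T_1$. Both $\mathfrak{S}_T$ and $\hat T - T$ have codimension at least $2$ in $\hat T$ (the latter by (\ref{TDef})), so Hartogs's principle---already invoked in Theorem \ref{Thmid}---allows us to carry out the computation at the level of divisor classes on the respective closures.

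First I would identify $K_{T_1}$. By the diagram, $T_\mathcal{D} \cong \mathcal{D}-\mathfrak{S}_\mathcal{D}$ is an \'etale $\Gamma'$-cover of $T_1$, with $\Gamma' = \ker\det$ acting freely. The argument in the proof of Lemma \ref{LemCano} yields $\Omega_\mathcal{D}\simeq \mathcal{O}_\mathcal{D}(4)$ via adjunction on the quadric $\mathcal{D}\subset\mathbb{P}^5$, and the $\det$-twist appearing in $\Omega_\mathbb{O}$ comes purely from reflection sign changes, which vanish once we restrict to $\Gamma'$. Descending along the \'etale quotient therefore gives $K_{T_1}\simeq \mathcal{O}_{T_1}(4)$.

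Second I would identify $K_T$. The weighted Euler sequence for $\hat T = \mathbb{P}(4,6,10,12,18)$ yields $K_{\hat T} \simeq \mathcal{O}_{\hat T}(-(4+6+10+12+18)) = \mathcal{O}_{\hat T}(-50)$ as a Weil divisor class. By Proposition \ref{PropDiagram}, the weight $(4,6,10,12,18)$ action of $\mathbb{C}^*$ on $\hat T$ corresponds to the $\mathbb{C}^*$-action on $\mathcal{D}^*$ under the period map, so the tautological $\mathcal{O}_{\hat T}(1)$ agrees with the automorphic bundle $\mathcal{O}_T(1)$ of Corollary \ref{Corid}. Therefore $K_T\simeq \mathcal{O}_T(-50)$ in the same convention.

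Third I would apply Riemann--Hurwitz: $K_{T_1} = p_1^* K_T + R$, where the ramification divisor $R$ equals the reduced preimage $\mathfrak{H}_{T_1} = \{s=0\}$ of (\ref{T1}), with $p_1^* \mathfrak{H}_T = 2R$. Since $s^2 = \Delta_T(t)$ has weight $d$, the section $s$ has weight $d/2$, so $R$ has class $\mathcal{O}_{T_1}(d/2)$. Substituting yields $4 = -50 + d/2$, whence $d = 108$. The main subtle point will be the second step: since $\hat T$ is singular along positive-dimensional strata, $K_{\hat T}$ is only $\mathbb{Q}$-Cartier, and one must carefully match the weighted projective $\mathcal{O}(1)$ with the automorphic bundle $\mathcal{O}_T(1)$, as essentially encoded in Proposition \ref{PropDiagram}.
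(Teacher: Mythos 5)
Your argument is correct and is essentially the computation in the paper: both identify the canonical class upstairs as $\mathcal{O}(4)$ (adjunction on the quadric plus the observation that the $\det$-twist disappears on $\Gamma'=\ker\det$, which the paper packages as the free part of $\mathrm{Pic}(\mathbb{T}_1)$ and the orbifold equivalence $\mathbb{T}_1\simeq\mathbb{O}$), identify $K_T\simeq\mathcal{O}_T(-50)$ from the weighted projective space, and extract $d$ from the branched double cover formula $\Omega_{T_1}\simeq p_1^*\Omega_T\otimes\mathcal{O}_{T_1}(\mathfrak{H}_{T_1})$, arriving at the same equation $4=-50+d/2$. Your phrasing via Riemann--Hurwitz on the variety $T_1$ rather than on the orbifold $[T_1/(\mathbb{Z}/2\mathbb{Z})]$ is only a cosmetic repackaging of the paper's proof.
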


\begin{proof}
Let us consider the canonical bundle $\Omega_{T_1}$ on $T_1$.
Since $T$ is an open subset of the weighted projective space $\hat{T}=\mathbb{P}(4,6,10,12,18)$,
we have
\begin{align}\label{OmegaT}
\Omega_T = \mathcal{O}_T (-4-6-10-12-18) =\mathcal{O}_T(-50).
\end{align}
Since the double covering $p_1:T_1\rightarrow T$ is branched along $\mathfrak{H}_{T_1}$,
by considering the holomorphic differential forms,
we have
\begin{align}\label{OmegaT1}
\Omega_{T_1} \simeq p_1^* \Omega_T \otimes \mathcal{O}_{T_1} (\mathfrak{H}_{T_1}).
\end{align}
Here, the double covering induces a morphism
$$[p_1]:[T_1/(\mathbb{Z}/2\mathbb{Z})] \rightarrow T$$
of orbifolds.
We set
$$\mathbb{T}_1 = [T_1/(\mathbb{Z}/2\mathbb{Z})] .$$
From (\ref{OmegaT}) and (\ref{OmegaT1}), we obtain
\begin{align}\label{OmegaOrb}
\Omega_{\mathbb{T}_1} \simeq [p_1]^* \Omega_T \otimes \mathcal{O}_{\mathbb{T}_1} (\mathfrak{H}_{\mathbb{T}_1})
\simeq [p_1]^* \mathcal{O}_T(-50) \otimes  \mathcal{O}_{\mathbb{T}_1} (\mathfrak{H}_{\mathbb{T}_1}).
\end{align}
We remark that the Picard group ${\rm Pic}(\mathbb{T}_1)$ 
is generated by
$[p_1]^* \mathcal{O}_T (1)$ and $\mathcal{O}_{\mathbb{T}_1} (\mathfrak{H}_{\mathbb{T}_1}).$
Letting $d$ be the degree of $\Delta_T(t)$,
due to $(\ref{T1})$,
we obtain
\begin{align}\label{DoubleRelation}
[p_1]^* \mathcal{O}_T (d) \simeq \mathcal{O}_{\mathbb{T}_1} (2\mathfrak{H}_{\mathbb{T}_1}).
\end{align}
Here, (\ref{DoubleRelation}) gives the relation which determines the structure of ${\rm Pic}(\mathbb{T}_1)$.
Hence, we have
\begin{align}\label{PicT1}
{\rm Pic}(\mathbb{T}_1) \simeq \mathbb{Z} \oplus (\mathbb{Z}/2\mathbb{Z}),
\end{align}
where
$\mathbb{Z}$ is generated by $[p_1]^* \mathcal{O}_T (1)$
and
$\mathbb{Z}/2\mathbb{Z}$ is generated by $[p_1]^* \mathcal{O}_T (-\frac{d}{2}) \otimes \mathcal{O}_{\mathbb{T}_1} (\mathfrak{H}_{\mathbb{T}_1})$.
Here, we note that $d\in 2\mathbb{Z}$
because all weights for $T$ are even.
Also,
according to the construction of (\ref{diagram}),
the torsion part $(\mathbb{Z}/2\mathbb{Z})$ in (\ref{PicT1}) should be generated by ${\rm det} \in {\rm Char}(\Gamma)$.

Since $p_\mathcal{D}:T_\mathcal{D} \rightarrow T_1$ in (\ref{diagram}) gives the universal covering,
the orbifold $[T_\mathcal{D}/\Gamma]$ is equivalent to $[T_1/(\mathbb{Z}/2\mathbb{Z})] =\mathbb{T}_1$.
Also, recall that the orbifold $[T_\mathcal{D}/\Gamma]$ is equivalent to $\mathbb{O}.$
So, the orbifold $\mathbb{T}_1$ is equivalent to $\mathbb{O}.$

Therefore, by  Lemma \ref{LemCano}, Lemma \ref{LemEquiv}  and (\ref{OmegaOrb}),
we have
\begin{align*}
\mathcal{O}_\mathbb{O} (4) \otimes {\rm det} &\simeq \Omega_\mathbb{O}
\simeq \Omega_{\mathbb{T}_1}\\
&\simeq [p_1]^* \mathcal{O}_T (-50) \otimes \mathcal{O}_{\mathbb{T}_1} (\mathfrak{H}_{\mathbb{T}_1})\\
& \simeq [p_1]^* \mathcal{O}_T \Big(-50+\frac{d}{2}\Big) \otimes \Big([p_1]^*\mathcal{O}_T \Big(-\frac{d}{2}\Big) \otimes \mathcal{O}_{\mathbb{T}_1} (\mathfrak{H}_{\mathbb{T}_1})\Big).
\end{align*}
This implies that $4=-50 +\frac{d}{2}$.
So, we obtain  $d=108$.
\end{proof}

\begin{prop}\label{PropBranch}
The branch loci of the double covering $p_1:T_1\rightarrow T$ is given by the union $\{[t]\in T \hspace{0.5mm}|\hspace{0.5mm} t_{18}=0 \} \cup \{[t]\in T \hspace{0.5mm}|\hspace{0.5mm} d_{90} (t)=0\}$  of  divisors.
There is a holomorphic function $s_9$ ($s_{45}$, resp.) on $\mathcal{D}^*$ of weight $9$ ($45$, resp.) 
satisfying
\begin{align*}
\begin{cases}
&s_9^2 =t_{18},  \\
&s_{45}^2 = d_{90} (t)  \text{ in }  (\ref{d_{90} (t)}).
\end{cases}
\end{align*}
\end{prop}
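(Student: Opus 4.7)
The plan is to carry out the proof in three stages.

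Stage 1 (identifying branch components). A point $[t]\in T$ lies on $\mathfrak{H}_T$ exactly when a new $(-2)$-class appears in ${\rm NS}(S([t]))\cap A$, so that the period lies on a reflection hyperplane $H_\delta$ with $\delta\in\Delta(A)$. Using the Weierstrass data of Section 1.2, this occurs on exactly two divisors. Over a generic point of $\{d_{90}(t)=0\}$, two of the six $I_1$-fibres merge into an $I_2$-fibre, producing an $A_1$-singularity whose exceptional $(-2)$-curve is the desired root. Over $\{t_{18}=0\}$, the valuations of $g_2^\vee, g_3^\vee, R$ at $x_0=0$ become $3,5,9$, so the fibre at $x_0=0$ degenerates from $IV^*$ to $III^*$; equivalently, by Proposition \ref{PropS(t)CD}, $S([t])$ specialises to the Clingher-Doran family, whose N\'eron-Severi lattice $U\oplus E_8(-1)\oplus E_7(-1)$ contains $M$ with an extra simple root coming from the inclusion $E_6\subset E_7$ that lies in $A$.

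Stage 2 (degree matching). The divisors $\{t_{18}=0\}$ and $\{d_{90}(t)=0\}$ are reduced and irreducible of weighted degrees $18$ and $90$. By Proposition \ref{Propd}, $\deg \Delta_T(t)=108=18+90$, so no further branch component can appear and $\Delta_T(t)=c\, t_{18}\, d_{90}(t)$ for some $c\in\mathbb{C}^*$. This proves the first assertion.

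Stage 3 (constructing $s_9$ and $s_{45}$). The key local input is that, on $\mathcal{D}^*$, the function $t_{18}$ vanishes to order exactly $2$ along each reflection hyperplane $H_\delta^*$ above $\{t_{18}=0\}$: since $\sigma_\delta$ acts on the normal coordinate $w_1$ by $w_1\mapsto -w_1$, invariance of $t_{18}$ forces an even vanishing order, while its descent to $T$ has a simple zero on the reduced divisor $\{t_{18}=0\}$. Thus ${\rm div}(t_{18})=2D_1^*$ on $\mathcal{D}^*$, where $D_1^*$ is the reduced union of the relevant reflection hyperplanes. Using ${\rm Pic}(\mathcal{D}^*)=0$ (Section 5) together with the $\mathbb{C}^*$-invariance of $D_1^*$, one produces a $\mathbb{C}^*$-homogeneous holomorphic $f_1$ with ${\rm div}(f_1)=D_1^*$; adjusting its weight if necessary by a nowhere-vanishing weighted function, the ratio $t_{18}/f_1^2$ becomes a nowhere-vanishing function of weight $0$, hence descends to the simply connected domain $\mathcal{D}$ and admits a holomorphic square root. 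Absorbing this square root into $f_1$ produces $s_9$ of weight $9$ with $s_9^2=t_{18}$. The character is ${\rm det}$ because $\sigma_\delta$ reverses the sign of any defining function of $H_\delta^*$. The same argument applied to the other orbit gives $s_{45}$ with $s_{45}^2=d_{90}(t)$.

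The hard part will be verifying the order-$2$ vanishing statement and then exhibiting $f_1$ of the correct weight: this requires the joint use of the orbifold structure at the reflection hyperplanes, the reducedness of $\{t_{18}=0\}$ in $T$, the fact that each branch component comes from a single $\Gamma$-orbit of reflection hyperplanes, and the triviality of ${\rm Pic}(\mathcal{D}^*)$ together with the contractibility of $\mathcal{D}$.
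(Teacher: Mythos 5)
Your Stages 1 and 2 follow the paper's own route for the first assertion (the $I_2$/$A_1$ degeneration for $\{d_{90}(t)=0\}$, the degeneration to the Clingher--Doran family for $\{t_{18}=0\}$, and the degree count $18+90=108$ against Proposition \ref{Propd}), and your Stage 3 usefully fills in the square-root construction that the paper only sketches. However, there is a genuine error in how you identify the component $\{t_{18}=0\}$, and it propagates into Stage 3. On $\{t_{18}=0\}$ the transcendental lattice drops to $U\oplus U\oplus A_1(-1)$, and the orthogonal complement of $U\oplus U\oplus A_1(-1)$ inside $A=U\oplus U\oplus A_2(-1)$ is generated by a vector $\delta$ with $(\delta,\delta)=-6$ and ${\rm div}(\delta)=3$, not by a $(-2)$-vector: the rank-one lattice $\mathbb{Z}\delta$ contains no vector of square $-2$. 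So the ``extra simple root from $E_6\subset E_7$'' does not lie in $A$, and the generic period on $\{t_{18}=0\}$ lies on no hyperplane $H_{\delta'}$ with $\delta'\in\Delta(A)$. To see that $\{t_{18}=0\}$ is nevertheless a branch component you must check that the stabilizer in $\Gamma$ of a generic point of $\delta^{\perp}\cap\mathcal{D}$ is non-trivial; it is generated not by $\sigma_\delta$ (which interchanges the two non-zero classes of $A^\vee/A$ and hence is not stable) but by $-\sigma_\delta\in\tilde{O}^+(A)$, which fixes $\delta^{\perp}$ only projectively and acts by $Z\mapsto -Z$ on $\delta^{\perp}\cap\mathcal{D}^*$. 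Your Stage 1 omits this verification, and your local model for the branching over $\{t_{18}=0\}$ (a reflection fixing $H_\delta^*$ pointwise and negating a normal coordinate) is not the correct one there.

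The symmetric treatment of the two components in Stage 3 is then untenable. Any single-valued $f$ on $\mathcal{D}^*$ with $f^2$ equal to a $\Gamma$-invariant function satisfies $f\circ\gamma=\epsilon(\gamma)\,f$ for a genuine character $\epsilon$ of $\Gamma$, and by Theorem \ref{ThmKneser} the only choices are ${\rm id}$ and ${\rm det}$. If $s_9$ had character ${\rm det}$, then, since ${\rm det}(\sigma_{\delta'})=-1$ for \emph{every} $(-2)$-reflection and all the hyperplanes $H_{\delta'}^*$ with $\delta'\in\Delta(A)$ lie over $\{d_{90}(t)=0\}$, $s_9$ would be forced to vanish on all of them, contradicting $s_9^2=t_{18}\neq 0$ at a generic point of $\{d_{90}(t)=0\}$. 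The sign pattern your construction implicitly assigns ($-1$ on reflections over one component, $+1$ over the other) is simply not a character of $\Gamma$. The asymmetry of the two components is exactly what resolves this: the stabilizers over $\{d_{90}(t)=0\}$ fix their hyperplanes pointwise in $\mathcal{D}^*$ and give $s_{45}$ the character ${\rm det}$, whereas the stabilizers $-\sigma_\delta$ over $\{t_{18}=0\}$ act by $Z\mapsto -Z$, and the forced vanishing of $s_9$ there comes from its odd weight rather than from a non-trivial character. So Stage 3 must be carried out separately for the two components, with different local models and different resulting characters; as written, the step ``the same argument applied to the other orbit gives $s_{45}$'' would fail.
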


\begin{proof}
Recall that two fibres of Kodaira type $I_1$ of (\ref {Kodaira}) of elliptic surface $S([t])$ 
collapse into a  singular fibre of type $I_2$
on the divisor $\{[t]\in T \hspace{0.5mm} | \hspace{0.5mm} d_{90} (t)=0\}$ of (\ref{d_{90} (t)}).
Here, a singular fibre of type $I_2$ gives an $A_1$-singularity. 
An $A_1$-singularity acquires a  $(-2)$-curve.
(recall the argument in Section 1.2).
So,
according to the constructions of reflection hyperplanes $\mathfrak{H}_{\mathcal{D}}$ in Section 3.3 and the diagram (\ref{diagram}),
it follows that
\begin{align}\label{d_{90} (t)Delta}
\{[t]\in T \hspace{0.5mm} | \hspace{0.5mm} d_{90} (t) =0\} \subset \{[t]\in T \hspace{0.5mm} | \hspace{0.5mm}\Delta_T (t) =0\}.
\end{align}

Moreover,
according to Proposition \ref{PropCD} and Proposition \ref{PropS(t)CD},
the $K3$ surfaces $S([t])$ with the generic N\'eron-Severi lattice $U\oplus E_8(-1) \oplus E_6(-1)$  
are generically degenerated to the $K3$ surfaces with the N\'eron-Severi lattice
$U\oplus E_8(-1) \oplus E_7(-1)$
on the divisor $\{[t]\in T \hspace{0.5mm} | \hspace{0.5mm} t_{18}=0\}$.
This implies that
the divisor $\{[t]\in T \hspace{0.5mm} | \hspace{0.5mm} t_{18}=0\}$
should be contained in the image of  $\mathfrak{H}_{\mathcal{D}}$.
Namely,
\begin{align}\label{t18Delta}
\{[t]\in T \hspace{0.5mm} \hspace{0.5mm} | \hspace{0.5mm} \hspace{0.5mm} t_{18} =0\} \subset \{[t]\in T \hspace{0.5mm} | \hspace{0.5mm}\Delta_T (t) =0\}.
\end{align}

From (\ref{d_{90} (t)Delta}) and (\ref{t18Delta}),
the polynomial $t_{18} d_{90} (t)$ should divide the polynomial 
$\Delta_T (t)$. 
By virtue of Proposition \ref{Propd},
we conclude that
\begin{align}\label{t18d_{90} (t)}
\Delta _T(t ) = {\rm const. } \cdot t_{18} d_{90} (t),
\end{align}
because the polynomial $t_{18} d_{90} (t)$ is of weight $108$.

Recall the total coordinate ring on $Q$ is $\mathbb{C}[t_4,t_6,t_{10},t_{12},t_{18}]$ (Theorem \ref{Thmid}).
We had the relationship between the character ${\rm det}$ for $\Gamma$
and the double covering $p_1 :T_1\rightarrow T$ branched along the divisor $\{[t]\in T \hspace{0.5mm} | \hspace{0.5mm} \Delta_T (t)=0\}$   (recall  Section 3.3).
Moreover,
since  there is the irreducible decomposition  (\ref{t18d_{90} (t)}) of the polynomial $\Delta_T(t)$,
we have a holomorphic function $s_9$ ($s_{45}$, resp.) of weight $9$ ($45$, resp.) 
 on $\mathcal{D}^*$
satisfying the relation
$
s_9^2=t_{18}$
($
  s_{45}^2 =d_{90} (t),
$
resp.).
\end{proof}

We note that $s_9$ ($s_{45}$, resp.) does not vanish on the  divisor corresponding to the divisor $\{d_{90}=0 \}$ ($\{t_{18}=0\}$, resp.).
So, $s_9$ and $s_{45} $ are not modular forms of character {\rm det} in the sense of Definition  \ref{DfModularForm}.
However, we have the following theorem.

\begin{thm}\label{ThmTotal}
The holomorphic function $s_{54} =s_9 s_{45}$ on $\mathcal{D}^*$ gives a modular form of weight $54$ and character ${\rm det}$. 
 The relation $s_{54}^2= t_{18} d_{90} (t)$ determines the ring $\mathcal{A}(\Gamma)$. 
Here, $\mathcal{A}(\Gamma) = \mathcal{A}(\Gamma,{\rm id}) \oplus \mathcal{A}(\Gamma,{\rm det})$ and
\begin{align*}
\begin{cases}
& \mathcal{A}(\Gamma,{\rm id}) = \mathbb{C}[t_4,t_6,t_{10},t_{12},t_{18}],  \\
& \mathcal{A} (\Gamma,{\rm det}) =s_{54} \mathbb{C}[t_4,t_6,t_{10},t_{12},t_{18}] .
\end{cases}
\end{align*}
\end{thm}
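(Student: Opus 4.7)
The plan is to use the character decomposition $\mathrm{Char}(\Gamma) = \{\mathrm{id}, \det\}$ from Theorem~\ref{ThmKneser}~(2) to split
\[
\mathcal{A}(\Gamma) = \mathcal{A}(\Gamma,\mathrm{id}) \oplus \mathcal{A}(\Gamma,\det).
\]
The id-component has already been identified with $\mathbb{C}[t_4, t_6, t_{10}, t_{12}, t_{18}]$ in Corollary~\ref{Corid}, so the remaining task is to identify $\mathcal{A}(\Gamma, \det)$ with $s_{54} \cdot \mathbb{C}[t_4, t_6, t_{10}, t_{12}, t_{18}]$ and confirm the relation $s_{54}^2 = t_{18} d_{90}(t)$.

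First I would verify that $s_{54}$ is a modular form of character $\det$. Since $s_{54}^2 = t_{18} d_{90}(t)$ lies in $\mathcal{A}(\Gamma, \mathrm{id})$, for each $\gamma \in \Gamma$ the identity $(\gamma^{\ast} s_{54})^2 = s_{54}^2$ forces $\gamma^{\ast} s_{54} = \epsilon(\gamma) s_{54}$ for a globally constant sign $\epsilon(\gamma) \in \{\pm 1\}$, and multiplicativity makes $\epsilon$ a character of $\Gamma$. By Theorem~\ref{ThmKneser}~(2), either $\epsilon = \mathrm{id}$ or $\epsilon = \det$. The first option would force $s_{54} \in \mathbb{C}[t_4, t_6, t_{10}, t_{12}, t_{18}]$ by Corollary~\ref{Corid}, making $t_{18} d_{90}(t)$ a square in this polynomial ring; this contradicts the distinct irreducibility of $t_{18}$ and $d_{90}(t)$. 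Hence $\epsilon = \det$.

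For the reverse inclusion, I would take $f \in \mathcal{A}_k(\Gamma, \det)$ and argue by division. Theorem~\ref{ThmKneser}~(1) guarantees that every $(-2)$-reflection $\sigma_\delta$ lies in $\Gamma$, so $f(\sigma_\delta Z) = -f(Z)$ and in particular $f \equiv 0$ on the fixed hyperplane $H_\delta$. In local coordinates $(w_1, w_2, w_3, w_4)$ near a generic point of $H_\delta = \{w_1 = 0\}$ with $\sigma_\delta\colon w_1 \mapsto -w_1$ and the other $w_j$ fixed, the $\sigma_\delta$-invariance of $t_{18}$ and $d_{90}(t)$ makes each of them even in $w_1$; by Proposition~\ref{PropBranch} and the diagram~(\ref{diagram}) exactly one of them vanishes along $H_\delta$ to order $2$ (corresponding to the ramification index $2$ of $\mathcal{D} \to T$ along the relevant branch component), while the other is a local unit. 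Consequently $s_{54}^2$ vanishes to order exactly $2$ along $H_\delta$, so $s_{54}$ vanishes to order exactly $1$. Since $f$ also vanishes along $H_\delta$, the ratio $f/s_{54}$ is holomorphic along every reflection hyperplane, hence on all of $\mathcal{D}^{\ast}$; it transforms by character $\det \cdot \det = \mathrm{id}$ with weight $k - 54$, so by Corollary~\ref{Corid} it lies in $\mathbb{C}^{(k-54)}[t_4, t_6, t_{10}, t_{12}, t_{18}]$. Thus $f = s_{54} \cdot (f/s_{54})$ belongs to $s_{54} \cdot \mathbb{C}[t_4, t_6, t_{10}, t_{12}, t_{18}]$.

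The main obstacle is the local ramification analysis underlying the previous paragraph: one must confirm that the preimage in $\mathcal{D}$ of each irreducible component of $\{t_{18} d_{90}(t) = 0\}$ consists of a single $\Gamma$-orbit of reflection hyperplanes along which $\mathcal{D} \to T$ is exactly $2$-to-$1$, so that precisely one of $t_{18}, d_{90}(t)$ pulls back to a local square on a tubular neighborhood of any given $H_\delta$ and the other remains a unit. This relies on the orbifold description in Section~3.3 and on the proof of Proposition~\ref{PropBranch}. Once secured, combining both inclusions gives $\mathcal{A}(\Gamma, \det) = s_{54} \cdot \mathbb{C}[t_4, t_6, t_{10}, t_{12}, t_{18}]$, and the relation $s_{54}^2 = t_{18} d_{90}(t)$ together with the direct sum decomposition determines $\mathcal{A}(\Gamma)$ as claimed.
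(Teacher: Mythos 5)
Your proposal is correct and follows essentially the same route as the paper: modular forms of character $\det$ must vanish on the reflection hyperplanes $\mathfrak{H}_{\mathcal{D}}$, the branch locus is identified via Proposition \ref{PropBranch} with $\{t_{18}d_{90}(t)=0\}$, and division by $s_{54}$ (which vanishes to order exactly one along each $H_\delta$ because the quotient map is $2$-to-$1$ there) reduces to the character-$\mathrm{id}$ case of Corollary \ref{Corid}. Your explicit sign argument showing $\gamma^*s_{54}=\epsilon(\gamma)s_{54}$ with $\epsilon=\det$ (ruled out as $\mathrm{id}$ since $t_{18}d_{90}$ is not a square) is a welcome detail the paper leaves implicit in its orbifold construction, but it is not a different method.
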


\begin{proof}
The argument in Section 3.3 implies that 
  modular forms for $\Gamma$ of character ${\rm det}$ 
are corresponding to the double covering $p_1$.
Also,
such modular forms must vanish on  
the preimage of $\mathfrak{H}_\mathcal{D}$ under the canonical projection $\mathcal{D}^* \rightarrow \mathcal{D}$.
From Proposition \ref{PropBranch}, 
the branch loci is given by the union of the two divisors  $\{t_{18}=0\}$ and $\{d_{90}(t)=0\}$.
Therefore, every modular form of character ${\rm det}$ is given by a product of $s_{54}=s_9 s_{45}$ and $F(t) \in \mathbb{C}[t_4,t_6,t_{10},t_{12},t_{18}]$. 
\end{proof}

\begin{rem}\label{RemDiscriminant}
The classical  elliptic discriminant form, which is of weight $12$ and generates the vector spaces of cusp forms for $SL(2,\mathbb{Z})$,
is originally given 
by the discriminant of the right hand side of the Weierstrass equation of an elliptic curve.
Also, as in Section 2.3,
from the discriminant of the right hand side of the defining equation (\ref{SCD}) of the $K3$ surface $S_{CD}$,
we can compute  Siegel modular forms of non-trivial character,
which determine the structure of the ring of  Siegel modular forms.
Our constructions of $s_9$ and $s_{45}$ are counterparts of such  constructions from discriminants of Weierstrass forms.
\end{rem}

Due to Proposition \ref{PropSiegelModular}, Proposition \ref{PropS(t)CD} and Remark \ref{RemDiscriminant}, 
our family of $K3$ surfaces can be regarded as a natural extension of the family of (\ref{SCD})
and
our modular forms for the lattice $A$ give natural counterparts of the classical Siegel modular forms 
from based on inverse period mappings of $K3$ surfaces.
The author expects that 
we can study our modular forms more thoroughly as follows.

\begin{itemize}

\item
There are several  results  unifying periods of $K3$ surfaces and Shepherd-Todd groups.
For example, in the previous paper \cite{NCF},
we obtained a unification of arithmetic properties of Hilbert modular forms,
the invariants of the Shepherd-Todd group of No. 23 
and the geometry of the $K3$ surfaces of \cite{NTheta}.
By the way,  \cite{DK} or \cite{FS} studied
modular forms related to the Shepherd-Todd group of No.33.
However, to the best of the author's knowledge,
there has not been an explicit expression of their modular forms 
from the moduli of algebraic varieties.
The author expects that
our results will provide a natural moduli interpretation of their modular forms 
and enable us to connect arithmetic properties  of modular forms and our $K3$ surfaces.
It will be studied in the forthcoming research paper with H. Shiga.

\item 
Recently, among researchers studying the Langlands program,
it is more and more important to study 
modular forms on the real $3$-dimensional hyperbolic space $\mathbb{H}_{\mathbb{R}}^3$.
Since they are real analytic functions,
it is highly non-trivial to construct and study them explicitly.
Matsumoto, Nishi and Yoshida \cite{MNY}
succeeded to obtain
 modular forms on $\mathbb{H}_{\mathbb{R}}^3$
via the period mapping for $K3$ surfaces of \cite{MSY}.
However, 
their results are much complicated.
Here, 
our modular forms are also defined 
on the $4$-dimensional  symmetric domain of type $IV$.
Recall that we have a merit that
 our modular group is simpler than that of \cite{MSY}.
 So, the author expects that 
 we can apply our results to construct modular forms on $\mathbb{H}_{\mathbb{R}}^3$ effectively.
\end{itemize}

\section*{Acknowledgment}
The author would like to thank Professor Hironori Shiga for helpful advises and  valuable suggestions.
He  thanks Professor Manabu Oura for suggesting me the results of \cite{ST}.
Moreover, 
he would like to thank Professor Mehmet Haluk Seng\"{u}n
for letting me know the paper \cite{MNY}.
This work is supported by JSPS Grant-in-Aid for Scientific Research (18K13383)
and 
MEXT Leading Initiative for Excellent Young Researchers (2018-18).

\begin{center}
\hspace{7.7cm}\textit{Atsuhira  Nagano}\\
\hspace{7.7cm}\textit{Faculty of Mathematics and Physics}\\
 \hspace{7.7cm} \textit{Institute of Science and Engineering}\\
\hspace{7.7cm}\textit{Kanazawa University}\\
\hspace{7.7cm}\textit{Kakuma, Kanazawa, Ishikawa}\\
\hspace{7.7cm}\textit{920-1192, Japan}\\
 \hspace{7.7cm}\textit{(E-mail: atsuhira.nagano@gmail.com)}
  \end{center}

\end{document}